\def\a{\mathfrak{a}}
\def\m{\mathfrak{m}}
\newcommand{\C}{\mathbb{C}}
\newcommand{\R}{\mathbb{R}}
\newcommand{\Z}{\mathbb{Z}} 
\newcommand{\Q}{\mathbb{Q}}
\renewcommand{\to}{\longrightarrow}
\newtheorem{Thm}{Theorem}[section]
\newtheorem*{theorem*}{Theorem}
\newtheorem*{thm*}{Theorem}
\newtheorem{Lemma}[Thm]{Lemma} 
\newtheorem{Prop}[Thm]{Proposition} 
\newtheorem{Cor}[Thm]{Corollary}
\theoremstyle{definition}
\newtheorem{Definition}[Thm]{Definition}
\newtheorem*{defn}{Definition}  
\newtheorem*{Ex}{Example}  
\theoremstyle{remark}
\newtheorem*{rmk}{Remark}
\newtheorem{ind}[]{{\rm\it Indice}}
\renewcommand{\epsilon}{\varepsilon}
\title[Jacobi forms with CM]{Jacobi forms with CM and applications}
\author[Wagner]{Ian Wagner}
\email{ianwagner11@gmail.com}
\begin{document}
\numberwithin{equation}{section}

\maketitle

\begin{abstract}
We define Jacobi forms with complex multiplication.  Analogous to modular forms with complex multiplication, they are constructed from Hecke characters of the associated imaginary quadratic field.  From this construction we obtain a Jacobi form which specializes to $\eta(\tau)^{26}$ which we present to highlight an open question of Dyson and Serre.  We give other examples and applications of Jacobi forms with complex multiplication including constructing theta blocks associated to elliptic curves with complex multiplication and new families of congruences and cranks for certain partition functions.
\end{abstract}

\section{introduction and statement of results}

Modular forms with complex multiplication (CM) were first introduced by Hecke when he studied Hecke characters in order to simultaneously generalize both Dirichlet $L$-functions and Dedekind zeta functions (see \cite{CS}).  Since that time they have been widely studied in connection to varied subjects such as Apery-like numbers and lacunary modular forms \cite{GMY, OR}.  One especially important work is that of Serre \cite{Serre} where he completely classifies the powers of the Dedekind eta function $\eta(\tau)$ which have CM.  Beautiful formulas for the coefficients of these functions and related ones were first noticed by Dyson.  For example, if we define the coefficients $\tau(n)$ by
\begin{equation*}
 \sum_{n \geq 1} \tau(n) q^n \coloneqq \eta^{24}(\tau) 
\end{equation*}
where we let $q\coloneqq e^{2 \pi i \tau}$ with $ \tau \in \mathbb{H}$ throughout, then a related identity is
\begin{equation*}
\tau(n) =\sum_{\substack{x \in \Z^5 \\ x \cdot 1 =0 \\ x_{i} \equiv i \pmod{5} \\ x_{1}^2 + x_{2}^2 + x_{3}^2 + x_{4}^2 + x_{5}^2 =10n}}\left( \frac{1}{288} \prod_{1 \leq i < j \leq 5} (x_{i}-x_{j}) \right).
\end{equation*}  
This identity and almost all of the others noticed by Dyson were explained by Macdonald \cite{M} in his work which connected these modular forms to affine root systems.  Specifically he constructed Jacobi forms with lattice index of the associated root lattice which, when specialized, simplified to the corresponding modular form. In particular, the identity above comes from the root system $A_{4}$.  The only similar identity that was not explained by Macdonald's work was the one corresponding to $\eta(\tau)^{26}$ which also belongs to Serre's list of powers of $\eta(\tau)$ with CM.  Both Serre \cite{Serre} and Dyson \cite{Dy} asked for a MacDonald-type identity for $\eta(\tau)^{26}$.  With this as motivation we define a family of Jacobi forms which we call {\it Jacobi forms with complex multiplication} as they specialize to modular forms with CM.  We will present several applications and examples of this construction.  In particular, we reproduce the MacDonald identities associated to powers of $\eta(\tau)$ with CM, we construct Jacobi forms with infinite product representations associated to elliptic curves with CM, and we show how to construct crank functions which explain Ramanujan-type congruences for different kinds of partition functions.  All of these applications are instances where Jacobi forms have infinite product representations and so they fit within the recent theory of theta blocks \cite{GSZ} which are essentially quotients of Jacobi theta functions and Dedekind eta functions.

We delay details on Hecke characters until Section 2.  If $K$ is an imaginary quadratic field and $\xi$ is a Hecke character modulo $\m$ with infinity type $\xi_{\infty}(\alpha) = \left(\frac{\alpha}{|\alpha|} \right)^{k-1}$ with $k \geq 1$, then we define
\begin{equation*}
\theta_{\xi}(\tau) \coloneqq \sum_{\mathfrak{a} \ \text{integral}} \xi(\mathfrak{a}) N(\a)^{\frac{k-1}{2}} q^{N(\a)},
\end{equation*}
where Let $N(\mathfrak{a})$ is the norm of the ideal $\mathfrak{a}$.  It is known that $\theta_{\xi}(\tau)$ is a weight $k$ modular form on $\Gamma_{0}(|D| \cdot N(\m))$ with character $\chi(n) = \chi_{D}(n) \xi_{m}(n)$ where $\chi_{D}(n) = \left(\frac{D}{n} \right)$ \cite{CS}.  Further, if $k \geq 2$ then $\theta_{\xi}$ is a cusp form and if $\xi$ is primitive then $\theta_{\xi}$ is a newform.

Throughout we will let $z=a \tau + b \in \C^2$ with $a, b \in \R^2$.  We also delay the definition of $\beta$ and how to define shifts in the arguments of $N$ and $\xi$ until \eqref{idealfunctions} in Section 2, however $\beta$ should be thought of as the bilinear form associated to the quadratic norm form for an imaginary quadratic field.
\begin{Definition}
Assume the notations above.  Let $\xi$ be a Hecke character for the imaginary quadratic field $K=\Q(\sqrt{D})$ modulo $\m$ with infinity type $\xi_{\infty}(\alpha) = \left(\frac{\alpha}{|\alpha|}\right)^{k-1}$. We define the {\bf{Jacobi form with CM by $\xi$}} by
\begin{equation}
\theta_{\xi}(z; \tau) \coloneqq \sum_{\a \ \text{integral}} \xi(\a +a) N(\a + a)^{\frac{k-1}{2}} e^{2 \pi i \beta(\a, b)} q^{N(\a) + \beta(\a, a)}.
\end{equation}
\end{Definition}
We can now state our main theorem. 
\begin{Thm} \label{Main}
Assume the notation above.  Then $\theta_{\xi}(z; \tau)$ is a weight $k$ Jacobi form on $\Gamma_{0}(|D|N(\m))$ with character $\chi_{D} \xi_{\m}$.
\end{Thm}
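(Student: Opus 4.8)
The plan is to verify the three conditions defining a holomorphic Jacobi form of weight $k$ and index given by the norm form $N$: joint holomorphy together with the Fourier support condition, the elliptic transformation law under $z \mapsto z + \lambda\tau + \mu$ for integral $\lambda,\mu$, and the modular transformation law under $\Gamma_0(|D|N(\m))$ with the asserted multiplier. The one structural identity driving everything is that, for $\alpha \in K$,
$$\xi_\infty(\alpha)\, N(\alpha)^{\frac{k-1}{2}} = \left(\frac{\alpha}{|\alpha|}\right)^{k-1}|\alpha|^{k-1} = \alpha^{k-1},$$
so the weight factor $\xi(\a+a)N(\a+a)^{\frac{k-1}{2}}$ equals the holomorphic polynomial $(\alpha+a)^{k-1}$ times the finite part of $\xi$, which depends only on $\a$ modulo $\m$. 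Thus $\theta_{\xi}(z;\tau)$ is a theta series attached to the rank-$2$ lattice $\O_K$ with the degree-$(k-1)$ \emph{harmonic} (indeed holomorphic) polynomial $\alpha^{k-1}$, which is precisely why its weight is $\frac{2}{2}+(k-1)=k$.

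First I would dispatch holomorphy and convergence. The polynomial growth of $(\alpha+a)^{k-1}$ is dominated by the Gaussian $q^{N(\a)}$, so the series converges locally uniformly on $\C\times\HH$ and defines a holomorphic function there; holomorphy in $z$ follows from the standard verification that, after the substitution $z = a\tau + b$, the apparent antiholomorphic dependence cancels because the weight polynomial is holomorphic. The elliptic transformation is the routine step: for integral $\lambda,\mu$, replacing $(a,b)$ by $(a+\lambda, b+\mu)$ and re-indexing the sum by $\a \mapsto \a - \lambda$ returns each summand to its original form up to the factor $e^{-2\pi i(N(\lambda)\tau + \beta(\lambda,z))}$, using $N(\a+\lambda) = N(\a) + \beta(\a,\lambda) + N(\lambda)$, the integrality of $\beta$ on $\O_K$, and the $\m$-periodicity of the finite part of $\xi$.

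The main obstacle is the modular transformation. Here I would first untangle the shifted ideal-arguments by decomposing the sum over integral ideals into ideal classes: within a class the ideals correspond to the nonzero lattice points of a fixed fractional ideal $\mathfrak{c}$, on which $N$, $\beta$, and the shifts $\a+a$ make literal sense, and the finite part of $\xi$ contributes only constant phases. This expresses $\theta_{\xi}(z;\tau)$ as a finite combination, indexed by classes and by residues modulo $\m$, of Jacobi theta series
$$\Theta_{L,P,h}(z;\tau) = \sum_{x \in L + h} P(x+a)\, e^{2\pi i \beta(x,b)}\, q^{N(x)+\beta(x,a)}, \qquad P(x)=x^{k-1},$$
over positive-definite rank-$2$ lattices $L$ with a coset $h$. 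To each I would apply the classical theta transformation formula, proved by Poisson summation: carrying the elliptic variable through the Gaussian integral produces at once the automorphy factor $(c\tau+d)^{k}$, the index term $e^{2\pi i\, cN(z)/(c\tau+d)}$ that is the signature of a Jacobi form, and a quadratic Gauss sum of the lattice. Organizing this through the Weil representation of $\SL_2(\Z)$ attached to $\O_K$ (for which it suffices to treat the generators $\begin{pmatrix}1&1\\0&1\end{pmatrix}$ and $\begin{pmatrix}0&-1\\1&0\end{pmatrix}$) and then descending, one finds that the level of the norm form, namely $|D|$, together with the modulus contributing $N(\m)$, yields invariance under $\Gamma_0(|D|N(\m))$ with some nebentypus multiplier $v$. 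The delicate bookkeeping is the reassembly of the Gauss sums and the residue permutations into a single character, and I expect this to be the hardest part.

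Rather than evaluate those Gauss sums directly, I would identify the multiplier by specialization. Setting $z=0$ gives $\theta_{\xi}(0;\tau) = \theta_{\xi}(\tau)$, which by \cite{CS} is a weight-$k$ form on $\Gamma_0(|D|N(\m))$ with character $\chi_D\xi_\m$; comparing this with the $z=0$ case of the transformation law just established (whose index term vanishes at $z=0$) forces $v = \chi_D\xi_\m$. Finally, the Fourier support condition defining holomorphic Jacobi forms is automatic from the positive-definiteness of $N$ — each exponent has the shape $N(x+a)-N(a)$ with $N\geq 0$ — and since the theta transformation sends each $\Theta_{L,P,h}$ to another series of the same shape, holomorphy and moderate growth hold at every cusp. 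Assembling the elliptic law, the modular law with multiplier $\chi_D\xi_\m$, and the growth condition shows that $\theta_{\xi}(z;\tau)$ is a weight-$k$ Jacobi form on $\Gamma_0(|D|N(\m))$ with character $\chi_D\xi_\m$.
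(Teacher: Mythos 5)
Your proposal is correct in outline and shares the paper's skeleton --- split the ideal sum over classes of ${\rm Cl}(K)$, observe that the weight factor $\xi_{\infty}(\alpha+a)\,N(\alpha+a)^{\frac{k-1}{2}}$ collapses to the holomorphic spherical polynomial $(\alpha+a)^{k-1}$ of degree $k-1$ (whence weight $1+(k-1)=k$), and run Poisson summation on the resulting rank-$2$ lattice theta series --- but it diverges at exactly the two places where the paper outsources work. The paper invokes Vign\'eras's theorem to obtain the $S$- and $T$-transformations of the untwisted $\phi_{C}$, cites \cite{Choie} to descend to $\Gamma_{0}(|D|)$ with character $\chi_{D}$, and then twists by $\xi_{\m}$ to raise the level to $|D|N(\m)$; you instead treat the twisted series directly through the Weil representation and determine the nebentypus not by evaluating Gauss sums but by specializing at $z=0$, where $\theta_{\xi}(0;\tau)=\theta_{\xi}(\tau)$ is already known (Hecke, as in \cite{CS}) to have character $\chi_{D}\xi_{\m}$. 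That specialization trick is legitimate and genuinely different: since the only integral ideal of norm $1$ is $\mathcal{O}_{K}$, the coefficient of $q$ in $\theta_{\xi}(\tau)$ is $1$, so the form is not identically zero and the comparison pins down the multiplier uniquely. What it buys is a clean identification of the character; what it costs is that the level $|D|N(\m)$ and the existence of a $z$-independent multiplier must still come out of the Weil-representation bookkeeping you defer, which is precisely the content the paper borrows from \cite{Choie} and the standard twisting argument. One small imprecision: in the elliptic law you appeal to the ``$\m$-periodicity of the finite part of $\xi$'' for arbitrary integral $\lambda$, but $\xi_{\m}(\alpha-\lambda)=\xi_{\m}(\alpha)$ only when $\lambda\equiv 0\pmod{\m}$, so the twisted $\phi_{\xi,C}$ satisfies the clean elliptic transformation a priori only for $\lambda$ in the corresponding sublattice (the paper hides the same issue inside ``twisting is standard''); this should be stated explicitly but does not affect the truth of the theorem.
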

Given $\theta_{\xi}(z; \tau)$ one can naturally split the sum over all integral ideals of $K$ into a double sum over classes in the ideal class group of $K$, ${\rm Cl}(K)$, and over elements in a representative of the class.  In fact, this is how the proof of Theorem \ref{Main} will proceed.  If one restricts to a single class of the ideal class group, then we still obtain a Jacobi form.  Namely, for $C = [\a_{0}] \in Cl(K)$ define
\begin{equation}
\phi_{\xi, C}(z; \tau) \coloneqq \sum_{\alpha \in \a_0} \xi_{\m}(\alpha) (\alpha + a)^{k-1} e^{2 \pi i B_{C}(\alpha, z)} q^{Q_{C}(\alpha)}.
\end{equation}
Then the proof of Theorem \ref{Main} will show $\phi_{\xi, C}(z; \tau)$ is a weight $k$ Jacobi form on $\Gamma_{0}(|D|N(\m))$ with character $\chi_{D} \xi_{\m}$ and of lattice index $\left(\Z^2, Q_{\a_{0}} \right)$.  The case of $k=1$ is stated in Theorem \ref{P} in Section 2 and has an extra Fricke-like transformation.  In the case where $\Q(\sqrt{D})$ has class number $1$ we have $\theta_{\xi} = \phi_{\xi,C}$.  
This is the situation which is applicable to the powers of $\eta(\tau)$ with CM.  In particular we have the following result related to $\eta(\tau)^{26}$.  Throughout we let $D_{z} \coloneqq \frac{1}{2 \pi i} \frac{\partial}{\partial z}$, $\zeta \coloneqq e^{2 \pi i z}$, and use the standard Jacobi theta functions which are given in Propositions \ref{BB} and \ref{BB2}.
\begin{Thm} \label{26Thm}
Define
\begin{align*}
\phi_{26}(z; \tau) &\coloneqq \frac{1}{2} \sum_{n_1, n_2 \in \Z} (-1)^{\frac{n_1 + n_2 -1}{2}}  \left( \frac{4}{n_1(n_2 +1)} \right) \left(\frac{n_1}{3} \right) \zeta_{1}^{\frac{n_1}{12}} \zeta_{2}^{\frac{n_2}{4}} q^{\frac{n_{1}^2 + 3n_{2}^2}{12}} \\
&+ \frac{1}{4} \sum_{n_{1} \equiv \pm n_2 \pmod{8}} \left(\frac{12}{n_1} \right)  \zeta_{1}^{\frac{n_1}{24}} \zeta_{2}^{\frac{n_2}{8}} q^{\frac{n_{1}^2 + 3n_{2}^2}{48}} \\
&-\frac{1}{2} \sum_{(n_1, n_2) \equiv (\pm 1, 0) \pmod{3}} (-1)^{n_2} \left( \frac{4}{n_1 +n_2} \right) \zeta_{1}^{\frac{n_{1}}{6}} \zeta_{2}^{\frac{n_{2}}{6}} q^{\frac{n_{1}^2 + n_{2}^2}{12}} \\
&+\frac{1}{2} \sum_{(n_1, n_2) \equiv (0, \pm 1) \pmod{3}} (-1)^{n_2} \left( \frac{4}{n_1 +n_2} \right) \zeta_{1}^{\frac{n_{1}}{6}} \zeta_{2}^{\frac{n_{2}}{6}} q^{\frac{n_{1}^2 + n_{2}^2}{12}}.
\end{align*}
Then 
\begin{align*}
\phi_{26}(z; \tau) &=\frac{1}{2} \theta^{*} \left(\frac{z_3}{6}; 2 \tau \right) \theta_{4} \left( \frac{z_4}{2}; 2 \tau \right) + \frac{1}{2} \theta^{*} \left(\frac{z_3}{12}; \frac{\tau}{2} \right) \theta_{2} \left( \frac{z_4}{4}; \frac{\tau}{2} \right) \\
&+ \frac{1}{4} \left[ \theta^{*}\left( \frac{z_3}{12} +\frac{1}{4}; \frac{\tau}{2} \right) \theta_{2} \left(\frac{z_4}{4} -\frac{1}{4}; \frac{\tau}{2} \right) + \theta_{2}^{*} \left(\frac{z_3}{12} +\frac{1}{4}; \frac{\tau}{2} \right) \theta \left(\frac{z_4}{4} -\frac{1}{4}; \frac{\tau}{2} \right) \right] \\
&+ \frac{1}{4} \left[ \theta^{*}\left( \frac{z_3}{12} +\frac{1}{4}; \frac{\tau}{2} \right) \theta_{2} \left(\frac{z_4}{4} +\frac{1}{4}; \frac{\tau}{2} \right) - \theta_{2}^{*} \left(\frac{z_3}{12} +\frac{1}{4}; \frac{\tau}{2} \right) \theta \left(\frac{z_4}{4} +\frac{1}{4}; \frac{\tau}{2} \right) \right] \\
&-\frac{1}{2} \theta^{*} \left(\frac{z_1 + z_2}{6}; \tau \right) \theta^{*} \left(\frac{z_1 -z_2}{6}; \tau \right),
\end{align*}
where $z= z_1 +iz_2 = \frac{z_3 +\sqrt{-3}z_4}{2}$ for $z_j \in \R$.  Further,
\begin{equation*}
D_{z}^{12} \left[\phi_{26}(z; \tau)\right]_{z=0} = \frac{1}{2^8 \cdot 3^4 \cdot 11^2 \cdot 13} \eta(\tau)^{26}.
\end{equation*}
\end{Thm}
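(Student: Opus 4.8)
The plan is to prove the two displayed assertions in turn. The underlying field is $K=\Q(\sqrt{-3})$, which has class number one, so $\phi_{26}$ is a single-class CM Jacobi form of the type furnished by the $k=1$ construction of Theorem~\ref{P}; the four sums in its definition are exactly the pieces one obtains by decomposing the defining lattice sum for $\mathcal{O}_K$ into residue classes modulo the conductor $\m$, with the Kronecker symbols $\left(\frac{4}{\cdot}\right)$, $\left(\frac{12}{\cdot}\right)$, $\left(\frac{\cdot}{3}\right)$ recording the finite part $\chi_D\xi_\m$ of the character and the binary forms $\frac{n_1^2+3n_2^2}{12}$, $\frac{n_1^2+n_2^2}{12}$ recording the norm on the relevant sublattices. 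Granting this, $\phi_{26}$ is a weight-one Jacobi form (a product of two half-integral-weight theta series), so $D_z^{12}$, which raises the weight by $12$, lands in weight $13$, matching $\eta^{26}$.

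For the first identity I would expand the right-hand side into a $q$-$\zeta$ series and match. Using the series for $\theta,\theta_2,\theta_4$ and their starred companions from Propositions~\ref{BB} and~\ref{BB2}, each summand on the right is a product of two one-variable theta series, which I expand into a double sum over $\Z^2$; the rescalings $2\tau$ and $\tau/2$ turn the two theta exponents into the forms $\frac{n_1^2+3n_2^2}{48}$ and $\frac{n_1^2+3n_2^2}{12}$, while the quarter-period shifts $\pm\frac14$ produce precisely the congruence $n_1\equiv\pm n_2\pmod 8$ together with the sign $(-1)^{(n_1+n_2-1)/2}$. After substituting $z=z_1+iz_2=\frac{z_3+\sqrt{-3}z_4}{2}$ to pass between the two real coordinatizations, I compare the four resulting double series term-by-term with the four sums defining $\phi_{26}$. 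This is a delicate step: the two coordinate systems, the shifted arguments, and the competing signs must all be reconciled simultaneously without error.

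For the specialization I would argue structurally and then fix the constant by a coefficient comparison. Differentiating the defining $\zeta$-monomials, $D_z^{12}$ evaluated at $z=0$ replaces each $\zeta_1^{a}\zeta_2^{b}$ by the twelfth power of the complex linear form it produces; across the four sums these twelfth powers assemble the degree-$12$ grossencharacter infinity type, promoting the weight-one CM Jacobi form to the weight-$13$ CM modular form $\sum_{\a}\xi(\a)N(\a)^{6}q^{N(\a)}=\theta_{\xi}(\tau)$ attached to $\Q(\sqrt{-3})$. Its $q$-expansion is supported on norms from $\Q(\sqrt{-3})$, so it is a CM form, and by Serre's classification~\cite{Serre} the relevant one-dimensional space of weight-$13$ CM forms is spanned by $\eta^{26}$; comparing a single nonzero Fourier coefficient then pins the scalar to $\frac{1}{2^8\,3^4\,11^2\,13}$. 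As an independent check, and the most direct route to the exact constant, one can instead differentiate the theta-product side by Leibniz and evaluate using the classical null-value derivatives, e.g. $\theta'(0;\tau)\propto\eta(\tau)^3$ together with the values of $\theta_2,\theta_4$ at $z=0$; the quasimodular (Eisenstein) contributions must then cancel, leaving a pure power of $\eta$.

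I expect the two genuine obstacles to be, first, the term-by-term reconciliation in the theta-block identity---aligning the $\sqrt{-3}$ and the standard complex coordinatizations while tracking every quarter-period shift and Kronecker symbol---and second, in the specialization, confirming that the weight-$13$ Taylor coefficient extracted by $D_z^{12}|_{z=0}$ is genuinely modular rather than merely quasimodular and matches the grossencharacter exactly, which is what ultimately produces the clean constant $\frac{1}{2^8\,3^4\,11^2\,13}$. A useful closing safeguard is that both sides of each assertion are modular objects of known weight, level, and character, so once the general shape is in place a finite Sturm-bound coefficient check certifies each identity.
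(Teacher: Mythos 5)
Your plan for the first displayed identity --- expanding each theta product on the right into a double $q$--$\zeta$ series via Propositions \ref{BB} and \ref{BB2}, tracking the rescalings $2\tau$, $\tau/2$ and the quarter-period shifts, and matching against the four defining sums --- is essentially the computation the paper performs, just run in the opposite direction: the paper inserts indicator functions such as $\tfrac{1}{4}\left(1+(-1)^{(n_3-n_4)/2}\right)\left(1+(-1)^{(n_3-n_4)/4}\right)$ to detect $n_3\equiv\pm n_4\pmod{8}$ and then reads off the shifted theta products. That part of your outline would go through.

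The genuine gap is in your structural premise, and it undermines your argument for the specialization. You assert that the underlying field is $\Q(\sqrt{-3})$ and that the four sums are residue-class pieces of a single lattice sum over $\mathcal{O}_K$. That is false: the first two sums carry the quadratic forms $\frac{n_1^2+3n_2^2}{12}$ and $\frac{n_1^2+3n_2^2}{48}$ and do come from $\Q(\sqrt{-3})$ (they are built from the two conductor-$4\sqrt{-3}$ characters of the $r=14$ case), but the last two sums carry $\frac{n_1^2+n_2^2}{12}$, the norm form of $\Z[i]$; they come from the cubes of the conductor-$6$ characters of the $r=10$ case over $\Q(i)$. The forms $x^2+3y^2$ and $x^2+y^2$ are not equivalent, so no residue-class decomposition of one lattice produces both; this is exactly why the statement needs the two coordinate systems $z=z_1+iz_2=\frac{z_3+\sqrt{-3}z_4}{2}$. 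Consequently $D_z^{12}\left[\phi_{26}\right]_{z=0}$ is not a single Hecke theta series $\theta_{\xi}$ over $\Q(\sqrt{-3})$; it is a linear combination of CM forms attached to two different fields, and $\eta(\tau)^{26}$ itself is not a CM eigenform but precisely such a two-field linear combination (this is the content of Serre's treatment of $r=26$, visible also in the $(26,0)$ entry appearing in both the $\Q(i)$ and $\Q(\sqrt{-3})$ columns of the Serre--Gordon--Robins table). Your appeal to ``the one-dimensional space of weight-$13$ CM forms spanned by $\eta^{26}$'' therefore has no meaning as stated, and comparing a single Fourier coefficient does not pin down the identity unless you first establish that the specialization lies in a known one- (or low-) dimensional space --- which requires matching the two-field decomposition against Serre's explicit expression for $\eta^{26}$ as a combination of Hecke theta series, or else a Sturm-bound verification at the correct level and character. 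Once the setup is corrected to two fields and four characters, the remainder of your outline can be salvaged.
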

\begin{rmk}
To the author's knowledge this is the only known example of a Jacobi form that specializes to $\eta(\tau)^{26}$ which is a sum of infinite products and has a sum-side representation in terms of binary quadratic forms.  However, to truly answer Dyson and Serre's question one would hope for infinite products similar to the other examples in Section 4.1.  In particular, one expects a product of $12$ theta functions divided by $\eta(\tau)^{10}$.
\end{rmk}

The problem of Dyson and Serre was the main motivation for this work, but we now give two further applications of Jacobi forms with CM.  The first application is to construct theta blocks associated to ellitptic curves with CM given in Table 1.  The full statement is given in Theorem \ref{elliptic} in Section 4, but we highlight one in particular here.  This example is of particular interest as it appears related to the cubic theta functions studied in \cite{Bor}.  We also note that the theta blocks associated to elliptic curves given in Theorem \ref{elliptic} have first Taylor coefficients whose square when evaluated at certain CM points are essentially the central $L$-value of the $L$-function of the elliptic curve as shown in \cite{RV}.  When we say a weight $2$ newform corresponds to an elliptic curve it is meant via the modularity theorem of Taylor and Wiles \cite{W, TW}.
\begin{Thm} \label{Ell27}
Let $E_{27}$ denote the elliptic curve
\begin{equation*}
E_{27}: y^2 +y = x^3 -7
\end{equation*}
which has conductor $27$ and CM by $\Q(\sqrt{-3})$ and let $\rho = e^{\frac{\pi i}{3}}$.  Then
\begin{align*}
&\phi_{27}(z; 3\tau) = \sum_{n_1 \equiv n_2 \pmod{2}} \left( \frac{2n_1}{3} \right) \rho^{3n_1 -n_2} \zeta_{1}^{\frac{n_{1}}{6}} \zeta_{2}^{\frac{n_{2}}{2}} q^{\frac{3\left(n_{1}^2 +3n_{2}^2\right)}{4}} \\
&= \frac{\theta\left(\frac{z_1 +3z_2}{6} + \frac{1}{3};3 \tau \right) \theta \left( \frac{z_1 -3z_2}{6} +\frac{2}{3}; 3\tau \right) \theta \left(\frac{z_1}{3};3 \tau \right)}{\eta(3\tau)}
\end{align*}
is a weight $1$ Jacobi form such that $D_{z}\left[\phi_{27}(z;\tau) \right]_{z=0}$ is (up to a nonzero constant) the weight $2$ newform corresponding to $E_{27}$.
\end{Thm}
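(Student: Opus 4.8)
The plan is to realize $\phi_{27}$ as an instance of the class-number-one CM construction of Theorem \ref{Main}, to prove the product formula as a theta-block identity, and then to read off the weight-two newform by a single application of $D_z$.

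First I would fix the arithmetic data. Since $K=\Q(\sqrt{-3})$ has class number one, with $\O_K=\Z[\rho]$ and $D=-3$, the level condition $|D|\,N(\m)=27$ forces the conductor $\m$ to have norm $N(\m)=9$. By the theory of CM elliptic curves (Deuring--Shimura) together with the modularity theorem of \cite{W, TW}, the $L$-function of $E_{27}$ equals $L(s,\xi)$ for a Hecke character $\xi$ of $K$ of this conductor and infinity type $\xi_\infty(\alpha)=\alpha/|\alpha|$, so that $\theta_\xi(\tau)$ is a constant multiple of the weight-two newform attached to $E_{27}$ \cite{CS}. The point is that I will \emph{not} build $\phi_{27}$ from this weight-two character directly; rather $\phi_{27}$ is the associated weight-one Jacobi form of trivial infinity type, whose finite character $\xi_\m$ produces the coefficient $\left(\tfrac{2n_1}{3}\right)\rho^{3n_1-n_2}$ in the statement. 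Writing $\alpha=x+y\rho\in\Z[\rho]$ and setting $n_1=2x+y$, $n_2=y$, the norm becomes $N(\alpha)=\tfrac{n_1^2+3n_2^2}{4}$ and the lattice condition becomes $n_1\equiv n_2\pmod 2$; with the bilinear form $\beta$ computed in this basis, the general series of Theorem \ref{Main} collapses, because the class group is trivial, to the explicit double sum defining $\phi_{27}$. The $k=1$ refinement stated in Theorem \ref{P} then gives immediately that $\phi_{27}$ is a weight-one Jacobi form on $\Gamma_0(27)$ with character $\chi_D\xi_\m$.

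Next I would prove the product identity. Both sides are weight-one Jacobi forms of the same lattice index, so it would in principle suffice to compare finitely many Fourier coefficients; the content is that the meromorphic quotient on the right is in fact a holomorphic Jacobi form, which is exactly the assertion that it is a theta block in the sense of \cite{GSZ}. I would instead prove the identity directly: expand each of the three Jacobi theta functions by the triple product, multiply the three resulting series together, and absorb $\eta(3\tau)^{-1}$. Matching the quadratic exponent $\tfrac{3(n_1^2+3n_2^2)}{4}$ and tracking how the two nonzero shifts $\tfrac13,\tfrac23$ generate both the root of unity $\rho^{3n_1-n_2}$ and the quadratic residue symbol $\left(\tfrac{2n_1}{3}\right)$ reproduces the sum side. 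This factorization into a \emph{single} product, rather than a sum of products, is precisely where class number one is used, and verifying that the shifts and the character match with the correct normalization and with no spurious factors is the main obstacle.

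Finally I would extract the newform. Applying $D_z=\tfrac{1}{2\pi i}\partial_z$ and setting $z=0$ on the sum side brings down a factor linear in $(n_1,n_2)$, which under $K\hookrightarrow\C$ is the embedding $\alpha\mapsto\alpha$ itself; this upgrades the trivial infinity type to $\xi_\infty(\alpha)=\alpha/|\alpha|$, so that $D_z[\phi_{27}]_{z=0}$ becomes a nonzero multiple of $\theta_\xi(\tau)$, the weight-two CM newform identified above with $E_{27}$. On the product side the same operation is transparent: of the three theta factors only the unshifted $\theta(z_1/3;3\tau)$ vanishes at $z=0$, so by the product rule $D_z$ selects its derivative, which is proportional to $\eta(3\tau)^3$, times the two shifted thetas evaluated at $z=0$. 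Since neither shifted theta vanishes at $z=0$, the resulting constant is nonzero, which confirms the ``up to a nonzero constant'' claim and provides a cross-check on the normalization. Combining the two computations gives the theorem.
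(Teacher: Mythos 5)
Your overall architecture (explicit Hecke character via Theorem \ref{Main}/\ref{P}, then the product identity, then a single application of $D_z$) is sound, and your final step is actually carried out more explicitly than in the paper: the observation that only the unshifted factor $\theta(z_1/3;3\tau)$ vanishes at $z=0$, so that $D_z$ produces $\tfrac13\eta(3\tau)^3\cdot\theta(\tfrac13;3\tau)\theta(\tfrac23;3\tau)/\eta(3\tau)$, is correct, and since $\theta(\tfrac13;3\tau)=\theta(\tfrac23;3\tau)=i\sqrt3\,\eta(9\tau)$ this is a nonzero multiple of $\eta(3\tau)^2\eta(9\tau)^2$, the Martin--Ono eta-quotient for $E_{27}$ in Table 1. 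That computation alone already identifies the newform, without needing the Deuring--Shimura identification of $\xi$ with the Gr\"ossencharacter of $E_{27}$ that you invoke (and which you assert rather than verify for the specific finite character $\left(\tfrac{2n_1}{3}\right)\rho^{3n_1-n_2}$).

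The genuine gap is in your middle step. Expanding the three theta factors ``by the triple product, multiplying the three resulting series together, and absorbing $\eta(3\tau)^{-1}$'' is not a proof: the product of three rank-one theta series is a triple sum, and collapsing it, after division by $\eta$, to a single sum over the rank-two lattice $\{n_1\equiv n_2 \pmod 2\}$ is exactly the content of the $A_2$ Macdonald identity --- it is the hard part, not a bookkeeping exercise, and your own hedge (``the main obstacle'') does not resolve it. The fallback you mention (both sides are holomorphic Jacobi forms of the same weight, index and level, so compare finitely many coefficients) would work but requires first establishing holomorphy of the quotient, i.e.\ the theta-block property. The paper avoids all of this: it observes that the $N=27$ identity \emph{is} the already-established $r=8$ identity of Proposition \ref{8} (the $A_2$ Macdonald identity, imported from \cite{M,GSZ}) after the substitutions $z_1\mapsto z_1+3$, $z_2\mapsto z_2-\tfrac13$ and $\tau\mapsto 3\tau$; these shifts convert $\zeta_1^{n_1/6}\zeta_2^{n_2/2}$ into $\rho^{3n_1-n_2}\zeta_1^{n_1/6}\zeta_2^{n_2/2}$ on the sum side and move the theta arguments to $\tfrac{z_1+3z_2}{6}+\tfrac13$, $\tfrac{z_1-3z_2}{6}+\tfrac23$, $\tfrac{z_1}{3}$ on the product side. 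If you replace your direct expansion by this reduction to Proposition \ref{8}, the rest of your argument goes through.
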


The last application is combinatorial in nature.  A partition of a non-negative integer $n$ is a non-increasing sequence of positive integers that sum to $n$.  The partition function $p(n)$ counts the number of partitions of $n$ and has generating function
\begin{equation*}
\sum_{n \geq 0} p(n) q^n = \prod_{n \geq 1} \frac{1}{1-q^n} = q^{\frac{1}{24}} \eta(\tau)^{-1}.
\end{equation*}
 Ramanujan \cite{Ramanujan} famously gave three congruences modulo $5, 7$, and $11$ for the partition function:
 \begin{align*}
 p(5n+4) & \equiv 0 \pmod{5} \\
 p(7n+5) &\equiv 0 \pmod{7} \\
 p(11n+6) & \equiv 0 \pmod{11}.
 \end{align*}
Many generating function proofs of these congruences exist, but Dyson famously asked for a combinatorial explanation for these congruences.  Specifically he asked for a statistic on partitions that splits the number of partitions of $\ell n + \delta$ into $\ell$ equinumerous sets of size $\frac{1}{\ell}p(\ell n + \delta)$ for $(\ell, \delta)=(5,4), (7,5), (11,6)$.  Dyson defined the {\it rank} of a partition as the largest part of the partition minus the number of parts and in \cite{AtkinSD} Atkin and Swinnerton-Dyer showed that this statistic achieved Dyson's goal for $\ell=5$ and $7$.  However the rank does not explain the Ramanujan congruence modulo $11$ and so Dyson conjectured the existence of another statistic, called the {\it crank}, that would explain all three congruences simultaneously.  Garvan discovered a variant of a crank function in \cite{Garvan1, Garvan2} and Andrews and Garvan later constructed Dyson's crank in \cite{AndrewsGarvan}.  In Section 4 we will show how to simultaneously prove Ramanujan-type congruences and construct crank functions which define statistics which explain the congruences for a family of partition functions.  This idea was recently used to construct crank functions for $k$-colored partitions in \cite{RTW} and \cite{BGRT}.  A general theorem of this nature is given in Theorem \ref{Cong}, but as an example of this result we present the following.  Define a {\it $(3,2)$-colored overpartition} as a partition with $3$ colors such that the first occurrence of a number may be overlined in $2$ of the colors.  We denote the number of such partitions of $n$ by $\overline{p}_{3,2}(n)$.  We then have the following result.
\begin{Thm} \label{CC}
For all integers $n \geq 0$ we have the Ramanujan-type congruence
\begin{equation*}
\overline{p}_{3,2}(7n+5) \equiv 0 \pmod{7}.
\end{equation*}
Further,
\begin{equation*}
\overline{C}_{3,2}(z; \tau) \coloneqq \prod_{n \geq 1} \frac{(1-q^n)(1+q^n)^2}{(1-\zeta q^n)(1-\zeta^{-1}q^n)(1-\zeta^2 q^n)(1-\zeta^{-2}q^n)}
\end{equation*}
defines a crank function which explains the congruence.
\end{Thm}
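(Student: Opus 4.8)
The plan is to obtain both statements from a single vanishing result for a root-of-unity specialization of $\overline{C}_{3,2}$, which is exactly the mechanism that the general Theorem~\ref{Cong} is designed to supply. First I would pin down the combinatorial generating function. Expanding the three colors (two of which carry overlines) gives
\[
\overline{C}_{3,2}(0;\tau)=\prod_{n\ge1}\frac{(1+q^n)^2}{(1-q^n)^3}=\sum_{n\ge0}\overline{p}_{3,2}(n)q^n,
\]
which agrees with setting $\zeta=1$ in $\overline{C}_{3,2}(z;\tau)$, since then the four denominator factors collapse to $(1-q^n)^4$. Consequently the coefficient of $\zeta^m q^n$ in $\overline{C}_{3,2}(z;\tau)$ is the number $M_{3,2}(m,n)$ of $(3,2)$-colored overpartitions of $n$ whose crank equals $m$, and $\sum_m M_{3,2}(m,n)=\overline{p}_{3,2}(n)$.

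Second, I would reduce everything to a vanishing statement. Put $\omega\coloneqq e^{2\pi i/7}$ and specialize $z=\tfrac17$, so that $\zeta=\omega$; writing $N_k(n)\coloneqq\sum_{m\equiv k\,(7)}M_{3,2}(m,n)$ we have
\[
[q^n]\,\overline{C}_{3,2}\!\left(\tfrac17;\tau\right)=\sum_{k=0}^{6}N_k(n)\,\omega^{k}.
\]
Because $1+x+\cdots+x^6$ is the minimal polynomial of $\omega$, an integer combination $\sum_{k=0}^{6}c_k\omega^{k}$ vanishes precisely when $c_0=\cdots=c_6$. Hence proving $[q^{7n+5}]\overline{C}_{3,2}(\tfrac17;\tau)=0$ forces $N_0(7n+5)=\cdots=N_6(7n+5)$, which is exactly the assertion that the crank sorts the $(3,2)$-colored overpartitions of $7n+5$ into seven equinumerous classes; summing the seven equal values then yields $\overline{p}_{3,2}(7n+5)=7\,N_0(7n+5)\equiv0\pmod 7$. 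Thus both claims follow once the vanishing is established.

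Third, I would make the specialization explicit. Using $1+q^n=(1-q^{2n})/(1-q^n)$ the product rewrites as
\[
\overline{C}_{3,2}(z;\tau)=\prod_{n\ge1}\frac{(1-q^{2n})^2}{(1-q^n)(1-\zeta q^n)(1-\zeta^{-1}q^n)(1-\zeta^{2}q^n)(1-\zeta^{-2}q^n)}.
\]
For each $j$ the Jacobi triple product gives $\prod_{n\ge1}(1-q^n)(1-\omega^{j}q^n)(1-\omega^{-j}q^n)=(1-\omega^{-j})^{-1}\Theta_j(\tau)$ with $\Theta_j(\tau)=\sum_{r\in\Z}(-1)^r\omega^{-jr}q^{r(r-1)/2}$, so that at $\zeta=\omega$,
\[
\overline{C}_{3,2}\!\left(\tfrac17;\tau\right)=(1-\omega^{-1})(1-\omega^{-2})\,\frac{\prod_{n\ge1}(1-q^{2n})^2(1-q^n)}{\Theta_1(\tau)\,\Theta_2(\tau)}.
\]
Up to the usual rational power of $q$ the numerator is $\eta(2\tau)^2\eta(\tau)$ and each $\Theta_j$ is a theta function at a $7$-torsion point, so this exhibits the specialization as a weight $\tfrac12$ (meromorphic) modular form on a group $\Gamma_0(2^a7^b)$ with character.

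The main obstacle is the final vanishing of the coefficients on $7n+5$. Here I would argue that, being of weight $\tfrac12$, the specialization is by the Serre--Stark theorem a combination of unary theta series $\sum_n\chi(n)q^{t n^2}$, and that every index $t$ occurring is either divisible by $7$ or a quadratic residue modulo $7$; since the squares modulo $7$ are $\{0,1,2,4\}$ and $5$ is a non-residue, no such series contributes an exponent $\equiv5\pmod7$, giving $[q^{7n+5}]\overline{C}_{3,2}(\tfrac17;\tau)=0$. The delicate point, beyond the formal reduction, is to clear $\Theta_1,\Theta_2$ from the denominator and identify the specialization explicitly enough to verify that only admissible indices $t$ appear. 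Equivalently, the congruence alone reduces (via $\prod(1-q^n)^7\equiv\prod(1-q^{7n})\pmod7$) to showing $[q^{7m+5}]\prod_{n\ge1}(1-q^n)^2(1-q^{2n})^2\equiv0\pmod7$, read off from the theta decomposition of $\eta(\tau)^2\eta(2\tau)^2$; this is precisely the hypothesis verified by the general Theorem~\ref{Cong}.
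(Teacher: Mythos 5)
Your reduction of both claims to the single statement that $[q^{7n+5}]\,\overline{C}_{3,2}(z;\tau)$ vanishes at $\zeta=\omega$ (equivalently, is divisible by $\Phi_{7}(\zeta)$ as a Laurent polynomial) is correct and is the same criterion the paper uses, and your last sentence disposes of the congruence exactly as the paper does, via $\prod(1-q^n)^7\equiv\prod(1-q^{7n})\pmod 7$ and the vanishing mod $7$ of the coefficients of $\eta(\tau)^2\eta(2\tau)^2$ on the class $21\pmod{28}$ because $7$ is inert in $\Q(i)$. The gap is in the crank half. After rewriting the denominator as $\prod_n(1-q^n)(1-\zeta q^n)(1-\zeta^{-1}q^n)(1-\zeta^{2}q^n)(1-\zeta^{-2}q^n)$ you specialize $\zeta=\omega$ and are left with theta values $\Theta_1,\Theta_2$ at $7$-torsion points in the denominator; you then invoke Serre--Stark to claim the quotient is a sum of unary theta series whose indices avoid the class $5\pmod 7$. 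This step does not go through as written: Serre--Stark applies to weight-$\tfrac12$ forms holomorphic at the cusps, and your specialization is a generalized eta/theta quotient whose cuspidal behaviour you have not controlled; moreover the assertion about which indices $t$ can occur is exactly the content of the theorem and is nowhere verified. You flag this yourself as ``the delicate point,'' but it is the whole proof.

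The missing idea is that one should \emph{not} specialize $\zeta$ and should instead complete the denominator to a full residue system modulo $7$. Multiplying numerator and denominator by $\prod_n(1-q^n)(1-\zeta^{3}q^n)(1-\zeta^{-3}q^n)$ gives
\begin{equation*}
\overline{C}_{3,2}(z;\tau)=\prod_{n\ge1}\frac{(1-q^{2n})^2(1-\zeta^{3}q^n)(1-\zeta^{-3}q^n)}{(1-q^n)\prod_{a=1}^{3}(1-\zeta^{a}q^n)(1-\zeta^{-a}q^n)}.
\end{equation*}
The new denominator runs over the complete residue system $\{0,\pm1,\pm2,\pm3\}$ modulo $7$, so each factor is congruent to $1-q^{7n}$ modulo $\Phi_{7}(\zeta)$ and its reciprocal is, modulo $\Phi_7(\zeta)$, supported on exponents divisible by $7$. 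The new numerator is no longer an obstruction but the whole point: up to its elementary prefactor it equals $\theta(3z;2\tau)\theta_{4}(3z;2\tau)$, the Jacobi form with CM by $\Q(i)$ sitting above $\eta(\tau)^2\eta(2\tau)^2$, whose $q^{7n+5}$ coefficients are divisible by $\Phi_{7}(\zeta)$ precisely because $7$ is inert in $\Q(i)$ (the surviving lattice points have both coordinates divisible by $7$). Convolving the two then gives $\Phi_{7}(\zeta)\mid[q^{7n+5}]\overline{C}_{3,2}(z;\tau)$, which is the statement you were after. Without this completion trick, or an honest identification of your weight-$\tfrac12$ specialization, your argument for the crank is not a proof.
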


The paper is organized as follows.  Section 2 will include a brief background on Hecke characters and present a specific case of Jacobi forms with CM.  We also state a theorem of Vign\'{e}ras which is the main ingredient needed to prove Theorem \ref{Main} and Theorem \ref{P} in Section 3.  Section 4 will include all examples, applications, and the proofs of Theorems \ref{26Thm}, \ref{Ell27}, and \ref{CC}.  Section 4 begins with the Macdonald identities for powers of $\eta(\tau)$ and the proof of Theorem \ref{26Thm} in Subsection 4.1.  Subsection 4.2 will present the theta blocks associated to elliptic curves with CM and Subsection 4.3 includes a new family of partition functions, Ramanujan-type congruences for them, and examples of how to construct crank functions which explain those congruences.

\section*{Acknowledgements}
The author would like to thank Larry Rolen, Walter Bridges, Andreas Mono, and Caner Nazaroglu for helpful conversations related to this project.  The author was supported by funding from the European Research Council (ERC) under the European Union's Horizon 2020 research and innovation programme (grant No. 101001179).

\section{Background and a Theorem of Vign\'{e}ras}

\subsection{Background on Hecke characters and a specialization result}
We now introduce Hecke characters related to modular forms.  For more information one can see \cite{CS}.  Let $K = \Q(\sqrt{D})$ with $D<0$ be a quadratic field of discriminant $D$ with ring of integers $\mathcal{O}_{K}$.  For an integral ideal $\m$, recall that the multiplicative congruence $\alpha \equiv^{*} 1 \pmod{\m}$ means that $v_{\mathfrak{p}}(\alpha -1) \geq v_{\mathfrak{p}}(\m)$ for all prime ideals $\mathfrak{p}|\m$, where $v_{\mathfrak{p}}(\m)$ is the valuation of $\m$ at $\mathfrak{p}$.  For an integral ideal $\m$, let $I(\mathfrak{m})$ be the group of fractional ideals of $K$ coprime to $\mathfrak{m}$.  A group homomorphism $\xi : I(\mathfrak{m}) \to \C^{\times}$ is a {\it{Hecke character}} modulo $\mathfrak{m}$ if there exists a group homomorphism $\xi_{\infty}: K^{\times}/\Q^{\times} \to \C^{\times}$ such that
\begin{equation*}
\xi(\alpha \mathcal{O}_{K}) = \xi_{\infty}(\alpha)
\end{equation*}
for all $\alpha \in K^{\times}$ such that $\alpha \equiv^{*} 1 \pmod{\mathfrak{m}}$.  When $K$ is an imaginary quadratic field we have $\xi_{\infty}(\alpha) = \left(\frac{\alpha}{|\alpha|} \right)^{m}$ for some integer $m$.  A Hecke character modulo $\m$ induces an ordinary character, $\xi_{\m}$, on $\left( \mathcal{O}_{K}/\m \right)^{\times}$ so that $\xi(\alpha \mathcal{O}_{K}) = \xi_{\m}(\alpha) \xi_{\infty}(\alpha)$ for any $\alpha \in K^{\times}$.  We now introduce the functions on integral ideals relevant to the definition of Jacobi forms with CM.  Let ${\rm Cl}(K)$ be the ideal class group of $K$.  For each class $[\a] \in {\rm Cl}(K)$, we can choose an ideal $\a_{0} \in [\a]^{-1}$, coprime to $\m$, and write $\a_0 = w_1 \Z + w_2 \Z$.  An ideal $\a$ is integral if and only if $\a \a_0 = \alpha \mathcal{O}_{K}$ for some  $\alpha \in \a_0$.  From this we see $N(\a) = \frac{N(\alpha)}{N(\a_0)} = \frac{N(n_1 w_1 + n_2 w_2)}{N(\a_0)}$ for some $n = \begin{pmatrix} n_1 \\ n_2 \end{pmatrix} \in \Z^2$, which we recognize as the binary quadratic form associated to the class of $\a_0$.  Explicitly, we have
\begin{equation*}
Q_{\a_0}(n) = an_{1}^2 + bn_{1} n_{2} + cn_{2}^2,
\end{equation*}
with the integers $a, b$, and $c$ defined by
\begin{equation*}
a= \frac{w_1 \overline{w_1}}{N(\a_0)}, \quad b = \frac{w_1 \overline{w_2} + \overline{w_1} w_2}{N(\a_0)}, \quad c= \frac{w_2 \overline{w_2}}{N(\a_0)}.
\end{equation*}  
We will denote this by $Q_{\a_0}(n)$ or $Q_{C}(n)$ when $\a_0$ is in the class $C$ and will let $B_{\a_0}(n,m) := Q_{\a_0}(n+m) - Q_{\a_0}(n) -Q_{\a_0}(m)$ be the associated bilinear form.  Following the argument above, we define the function 
\begin{equation*}
\beta(\a, m) = \beta \left( \alpha \a_{0}^{-1}, m\right) := B_{\a_{0}}(n,m),
\end{equation*}
where $\alpha = n_1 w_1 + n_2 w_2$.  We will allow some abuse of notation by writing $B_{\a_0}(\alpha, m) = B_{\a_{0}}(n, m)$ and $Q_{\a_0}(\alpha) = Q_{\a_0}(n)$. Further, for $a \in \R^2$ and $\alpha = n_1 w_1 + n_2 w_2$ as above, we define the following ways to allow shifts in our functions on ideals:
\begin{align} \label{idealfunctions}
\begin{split}
N(\a +a) &= N((\alpha) \a_{0}^{-1} +a) = \frac{N(\alpha +a)}{N(\a_0)} = Q_{\a_0}(\alpha +a) = Q_{\a_0}(n +a), \\
\beta(\a +a, m) &= B_{\a_0}(\alpha +a, m) = B_{\a_0}(n+a, m) \\
\xi(\a +a) &= \xi_{\m}(\a) \xi_{\infty}(\a +a) = \xi_{\m}(\a) \xi_{\infty}(\a_0)^{-1} \xi_{\infty}(\alpha +a),
\end{split}
\end{align}
where in each case we can also let $\alpha + a = (n_1 +a_1) w_1 + (n_2 + a_2) w_2$.  

We highlight a special case of Theorem \ref{Main} where $k=1$ and $\xi$ is primitive where the theta functions studied here satisfy an additional Fricke-like involution due to their behavior with respect to the Fourier transform.  To state this theorem we introduce some notation.  Let $S = \begin{pmatrix} 0 & -1 \\ 1 & 0 \end{pmatrix}$ and let $N$ be the level of the quadratic form $Q$. Further, let $Q^{*}(\cdot) = \frac{Q(\cdot)}{N}$ and let $B^{*} (\cdot, \cdot) = \frac{B(\cdot, \cdot)}{N}$ be its associated bilinear form.
\begin{Thm} \label{P}
Assume the notation above and let $\phi_{\xi, C}^{*}(z; \tau) \coloneqq \sum_{\alpha \in \a_0} \xi_{\m}(\alpha) e^{2 \pi i B_{C}^{*}(\alpha, z)} q^{Q_{C}^{*}(\alpha)}$.  Then we have
\begin{enumerate}
\item $\phi_{\xi,C}^{*}(z; \tau +1) = e^{\frac{2 \pi i}{N}}\phi_{\xi,C}^{*}(z; \tau)$.
\item $\phi_{\xi,C}^{*}(z + \lambda \tau + \mu; \tau) = e^{-2 \pi i B_{C}^{*}(\lambda,z)} q^{-Q_{C}^{*}(\lambda)} \phi_{\xi,C}^{*}(z; \tau), \quad \lambda, \mu \in \Z^2$.
\item $\phi_{\xi, C}^{*}\left(\frac{z}{\tau}; -\frac{1}{\tau} \right) = -i \tau e^{2 \pi i \frac{Q_{C}^{*}(z)}{\tau}}  \xi_{\m}(S) \phi_{\xi,C}^{*}(z;\tau)$.
\end{enumerate}
\end{Thm}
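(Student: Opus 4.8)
The plan is to treat $\phi_{\xi,C}^{*}(z;\tau)$ as the theta series attached to the rank-two positive definite lattice $(\a_0, Q_C)$ twisted by the primitive character $\xi_{\m}$, and to derive the three identities from the two generators $\tau \mapsto \tau+1$ and $\tau \mapsto -1/\tau$ of the modular action, exactly as in the proof of Theorem \ref{Main} but now keeping track of the elliptic variable $z$ and of the exact automorphy constants. Parts (1) and (2) are elementary bookkeeping, while part (3) carries the Fourier-analytic content and is where the hypotheses $k=1$ and $\xi$ primitive are genuinely used.

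For (1), I would substitute $\tau+1$ into the definition and observe that the only change is that each summand acquires the factor $e^{2\pi i Q_C^{*}(\alpha)}=e^{2\pi i Q_C(\alpha)/N}$. The point is that on the support of $\xi_{\m}$, i.e. for $\alpha$ prime to $\m$, the value $Q_C(\alpha)=N(\alpha\a_0^{-1})$ lies in a single residue class modulo the level $N$; identifying that class as $1 \bmod N$ turns the term-by-term factor into the global constant $e^{2\pi i/N}$ and gives (1). This congruence for the values of the norm form on the relevant coset is the only genuinely arithmetic input. For (2) I would complete the square: writing $B_C^{*}(\alpha,z+\lambda\tau+\mu)=B_C^{*}(\alpha,z)+\tau B_C^{*}(\alpha,\lambda)+B_C^{*}(\alpha,\mu)$ and using $Q_C^{*}(\alpha)+B_C^{*}(\alpha,\lambda)=Q_C^{*}(\alpha+\lambda)-Q_C^{*}(\lambda)$, the factors $e^{-2\pi i B_C^{*}(\lambda,z)}$ and $q^{-Q_C^{*}(\lambda)}$ come out, and after re-indexing the lattice sum by $\alpha\mapsto\alpha-\lambda$ the translation invariance of $\xi_{\m}$ restores the original series. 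This step is purely formal.

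The heart of the theorem is (3); here the plan is to apply Poisson summation, which is the mechanism underlying the theorem of Vign\'eras quoted in Section 2, to the Gaussian $e^{2\pi i(\tau Q_C^{*}(\alpha)+B_C^{*}(\alpha,z))}$. I would first break the sum over $\alpha\in\a_0$ into residues modulo $\m$, on each of which $\xi_{\m}$ is constant, and apply Poisson summation coset by coset. The rank-two Gaussian integral produces the automorphy factor $-i\tau$ (the weight-one factor together with the sign coming from the positive definite binary form) and the exponential $e^{2\pi i Q_C^{*}(z)/\tau}$, and replaces the sum over $\a_0$ by a sum over its $B^{*}$-dual lattice. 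The normalization $Q^{*}=Q/N$, $B^{*}=B/N$ by the level is chosen precisely so that, once the finite Fourier transform of $\xi_{\m}$ is carried out, the dual sum reorganizes back into the same series: since $\xi$ is primitive modulo $\m$, that finite Fourier transform is again proportional to $\xi_{\m}$ transported along the $S$-action on residues, the constant of proportionality being a normalized Gauss sum, and collecting all factors identifies this constant as $\xi_{\m}(S)$.

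The main obstacle is this last identification: checking that the Poisson-dual character is $\xi_{\m}(S)\,\xi_{\m}$ rather than some other function on the dual lattice. Concretely one must match the action of $S=\begin{pmatrix} 0 & -1 \\ 1 & 0 \end{pmatrix}$ on $\a_0$, namely multiplication by the complex structure of $K$, with the evaluation of the Gauss sum of $\xi_{\m}$, and verify that the two resulting meanings of $\xi_{\m}(S)$ coincide. This is exactly where primitivity of $\xi$ is indispensable (it guarantees the Gauss sum is nonzero and the transform closes up on a single character) and where the special feature of the $k=1$ case enters: the infinity type is trivial, so the weight function is the bare character $\xi_{\m}$ with no accompanying harmonic polynomial, which is what allows the Fourier transform to return a scalar multiple of the same theta series and so produces the clean Fricke-like involution.
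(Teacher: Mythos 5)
Your proposal follows essentially the same route as the paper: parts (1) and (2) by direct substitution and re-indexing in the lattice sum, and part (3) by Poisson summation in the spirit of Vign\'eras's theorem, with the primitive character contributing a Gauss-sum factor identified as $\xi_{\m}(S)$ and with $k=1$ ensuring no spherical polynomial obstructs the series closing up on itself. The only difference is presentational: you perform the character twist by hand via a coset decomposition and finite Fourier transform of $\xi_{\m}$, whereas the paper first establishes the trivial-character case from the Fourier transform identity $\mathcal{F}(f_{-1/\tau})(n) = -i\tau^{k} f_{\tau}(n)$ rescaled by the level $N$ and then invokes the standard twisting argument of Cohen--Str\"omberg.
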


For $\alpha \in \a_0$, one should view $\alpha^{k-1} = (n_1 w_1 + n_2 w_2)^{k-1}$ as a spherical polynomial with respect to the quadratic form $Q_{\a_0}$.  In fact it is known that all such spherical polynomials of degree $k-1$ are linear combinations of $\alpha^{k-1}$ and $\overline{\alpha}^{k-1}$ \cite{Z2}.  We therefore will show that the constructed forms in the examples specialize to modular forms with CM.  For simplicity in the following lemma we will restrict ourselves to fields with class number one and diagonal quadratic form.  Explicitly, for $K=\Q(\sqrt{D})$ with $D<0$ square-free we have $\mathcal{O}_{K} = \Z[\rho]$ with
\begin{equation*}
\rho = \begin{cases} \sqrt{D} & D \equiv 2, 3 \pmod{4} \\ \frac{1+\sqrt{D}}{2} & D \equiv 1 \pmod{4}. \end{cases}
\end{equation*}
Therefore, we write $\alpha$ as either $n_1 + \sqrt{D}n_2$ or $\frac{n_1 + \sqrt{D}n_2}{2}$ with $n_1 \equiv n_2 \pmod{2}$ depending on the congruence class of $D$ modulo $4$.  We then have $Q(\alpha) = \alpha \overline{\alpha}$.  In order to state the next theorem we need slightly more notation.  Let $\xi_{k-1}$ denote a Hecke character modulo $\m$ with infinity type $\xi_{\infty}(\alpha) = (\alpha/|\alpha|)^{k-1}$.  Then the following Lemma is clear from definitions and is what is meant in this work when we say a Jacobi form with CM specializes to a modular form with CM.
\begin{Lemma} \label{S}
Let $K = \Q(\sqrt{D})$ have class number one, $Q(\cdot)$ its quadratic form and $B(\cdot, \cdot)$ the associated bilinear form.  Then
\begin{align*}
\phi_{\xi_{0}}(z; \tau) &= \sum_{\alpha \in \mathcal{O}_{K}} \xi_{\m} e^{2 \pi i B(\alpha, z)} q^{Q(\alpha)}.
\end{align*}
Further, if we let $z = z_1 + \rho z_2$, then
\begin{equation*}
D_{z}^{k-1} \left. \left( \phi_{\xi_{0}}(z; \tau) \right) \right|_{t=0}  = C\theta_{\xi_{k-1}}(\tau),
\end{equation*}
for some nonzero constant $C$.
\end{Lemma}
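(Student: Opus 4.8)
The plan is to read off both displays directly from the Definition, from the conventions set in \eqref{idealfunctions}, and from the fact that in the class number one case the ideal class group is trivial, so that no genuine input beyond unwinding notation is required. First I would establish the opening formula. When $K$ has class number one we may take the single class representative to be $\a_0 = \mathcal{O}_K$, so that $\theta_\xi = \phi_{\xi,C}$ as observed after Theorem \ref{Main}; here the quadratic form $Q_C$ is the norm form $Q = N$ on $\mathcal{O}_K$ and $B_C = B$ is the trace pairing $B(\alpha,\beta) = \Tr(\alpha\overline{\beta}) = \alpha\overline{\beta} + \overline{\alpha}\beta$. For the character $\xi_0$, whose infinity type is $(\alpha/|\alpha|)^0 = 1$ (the case $k=1$), the spherical factor $(\alpha+a)^{k-1}$ equals $1$ and $\xi(\a+a) = \xi_\m(\a)$, so the definition collapses exactly to $\phi_{\xi_0}(z;\tau) = \sum_{\alpha \in \mathcal{O}_K} \xi_\m(\alpha)\, e^{2\pi i B(\alpha,z)} q^{Q(\alpha)}$, which is the first assertion.

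For the second display the point is that applying $D_z^{k-1}$ and setting $z=0$ (the ``$t=0$'' in the statement should read $z=0$) regenerates the archimedean factor $\alpha^{k-1}$ that would otherwise be inserted by hand in the weight $k$ form. Since the only $z$-dependence lives in the exponential and $B(\alpha,z)$ is linear in $z$, I would compute the holomorphic derivative termwise. Writing $z = z_1 + \rho z_2$ and unwinding the pairing in these coordinates shows $B(\alpha,z) = \alpha\overline{z} + \overline{\alpha} z$: for instance when $\rho = \sqrt{D}$ and $\alpha = m_1 + m_2\sqrt{D}$ one checks both sides equal $2 m_1 z_1 - 2 D m_2 z_2$, and the $\rho = (1+\sqrt{D})/2$ case is identical after adjusting $Q$. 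Treating $z$ as a single complex variable, so that $\overline{z} = z_1 + \overline{\rho}\, z_2$ with $\frac{\partial z}{\partial z} = 1$ and $\frac{\partial \overline{z}}{\partial z} = 0$, the Wirtinger derivative gives $\frac{1}{2\pi i}\frac{\partial}{\partial z} B(\alpha,z) = \overline{\alpha}$. Hence $D_z\, e^{2\pi i B(\alpha,z)} = \overline{\alpha}\, e^{2\pi i B(\alpha,z)}$, and $D_z^{k-1} e^{2\pi i B(\alpha,z)}\big|_{z=0} = \overline{\alpha}^{\,k-1}$, so differentiating the series term by term yields $\sum_{\alpha \in \mathcal{O}_K} \xi_\m(\alpha)\, \overline{\alpha}^{\,k-1} q^{N(\alpha)}$.

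It remains to identify this with $\theta_{\xi_{k-1}}$ up to a constant. Here I would use the relation $\xi(\alpha\mathcal{O}_K)\, N(\alpha)^{(k-1)/2} = \xi_\m(\alpha)\,(\alpha/|\alpha|)^{k-1}|\alpha|^{k-1} = \xi_\m(\alpha)\,\alpha^{k-1}$, which is exactly the archimedean factor reproduced by differentiation. In class number one every nonzero integral ideal is $\alpha\mathcal{O}_K$ for $\alpha$ unique up to the $w_K = |\mathcal{O}_K^\times|$ units, and the summand is unit-invariant precisely because $\xi$ descends to ideals; performing the bijective change of variable $\alpha \mapsto \overline{\alpha}$ on $\mathcal{O}_K$ (which preserves $N$) then matches the differentiated series with $w_K\,\theta_{\xi_{k-1}}(\tau)$, whence $C = 1/w_K \neq 0$.

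The only genuine computation, and the step I expect to require the most care, is the coordinate identification in the second paragraph making $B(\alpha,z)$ holomorphic-linear in the single variable $z = z_1 + \rho z_2$ with derivative exactly $\overline{\alpha}$ (equivalently $\alpha$ under the opposite differentiation convention). Everything else—the collapse of the definition at $k=1$, the bookkeeping of units $w_K$, and tracking the finite character $\xi_\m$ through the substitution $\alpha \mapsto \overline{\alpha}$—is purely formal, which is why the result is asserted to be clear from the definitions.
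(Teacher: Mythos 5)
Your proposal is correct and matches the paper's intent: the paper offers no written proof, declaring the lemma ``clear from definitions,'' and your unwinding --- trivial class group, collapse of the spherical factor at $k=1$, the identity $B(\alpha,z)=\alpha\overline{z}+\overline{\alpha}z$ with $D_z$ acting as multiplication by $\overline{\alpha}$, and the unit count $w_K$ --- is exactly the computation the paper performs implicitly (and carries out explicitly in its specialization arguments of Section 4, e.g.\ for $r=8$). Your flagged subtlety about tracking $\xi_\m$ through $\alpha\mapsto\overline{\alpha}$ is harmless here since the lemma only asserts equality with $\theta_{\xi_{k-1}}$ for \emph{some} Hecke character of the stated infinity type, up to a nonzero constant.
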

\begin{rmk}
One can view the formula $D_{z}^{k-1} \left. \left( \phi_{\xi_{0}}(z; \tau) \right) \right|_{t=0}  = C\theta_{\xi_{k-1}}(\tau)$ as a generalization of the well known formula $D_{z}(\theta(z; \tau))|_{z=0} =  \eta(\tau)^3$.
\end{rmk}
\subsection{A Theorem of Vign\'{e}ras}
In \cite{V} Vign\'{e}ras identified the differential equations and growth conditions necessary in order to use a Poisson summation argument to prove the modularity of a wide class of theta functions.  The version of her result presented here can be found in Chapter 8 of \cite{BFOR}.
\begin{thm*}[Vign\'{e}ras]
Let $Q(x) = \frac{1}{2} x^{t}Ax$ be a quadratic form of type $(r-s,s), L \subset \R^r$ a lattice on which $Q$ takes integral values, $L':= \{x \in \R^r : B(x,\ell) \in \Z \ \forall \ell \in L \}$ its dual lattice and $p: \R^r \to \C$ a Schwartz function satisfying
\begin{equation*}
\left( \mathcal{E} - \frac{\Delta_{Q}}{4 \pi} \right)p = \lambda p
\end{equation*}
for some $\lambda \in \Z$, where $\mathcal{E}$ is the Euler operator
\begin{equation*}
\mathcal{E} := \sum_{j=1}^r x_{j} \frac{\partial}{\partial x_j}
\end{equation*}
and $\Delta_Q$ is the Laplacian associated to $Q$.  Then the theta function
\begin{equation*}
\theta_{\mu}(z; \tau) := v^{-\frac{\lambda}{2}} \sum_{n \in \mu +L} p \left( \sqrt{v} \left(n + \frac{y}{v} \right) \right) e^{2 \pi i B(n,z)} q^{Q(n)}
\end{equation*}
satisfies the modular transformations
\begin{align*}
\theta_{\mu} \left( \frac{z}{\tau}; -\frac{1}{\tau} \right) &= \frac{(-i\tau)^{\lambda +\frac{r}{2}}}{\sqrt{|L'/L|}} e^{\pi i Q(A^{-1} A^{*})} \sum_{\nu \in L'/L} e^{-2 \pi i B(\mu, \nu) + 2 \pi i \frac{Q(z)}{\tau}} \theta_{\nu}(z; \tau) \\
\theta_{\mu}(z; \tau + 1) &= e^{2 \pi i Q\left(\mu + \frac{1}{2} A^{-1} A^{*} \right)} \theta_{\mu}(z; \tau),
\end{align*}
where $A^{*} := (A_{1,1}, \dots, A_{r,r})^{t}$.
\end{thm*}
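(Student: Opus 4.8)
The plan is to verify the two transformation laws stated in the theorem, since the matrices $T = \left(\begin{smallmatrix} 1 & 1 \\ 0 & 1\end{smallmatrix}\right)$ and $S = \left(\begin{smallmatrix} 0 & -1 \\ 1 & 0 \end{smallmatrix}\right)$ generate $\mathrm{SL}_2(\Z)$, so these two identities determine the full modular behaviour. I write $\tau = u + iv$ and $z = x + iy$ with $v > 0$, so that the summand
\[
g(n) := v^{-\lambda/2}\, p\!\left(\sqrt{v}\left(n + y/v\right)\right) e^{2\pi i B(n,z)} e^{2\pi i \tau Q(n)}
\]
of $\theta_\mu$ is, as a function of $n \in \R^r$, a Schwartz function: even though $Q$ is indefinite, the rapid decay of $p$ in every direction (it is Schwartz, evaluated on a positive-definite majorant through $\sqrt v\,(n+y/v)$) dominates the growth of $e^{-2\pi v Q(n)}$ in the directions where $Q$ is negative. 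This is what makes the defining series converge absolutely and legitimizes the Poisson summation used below.

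First I would dispose of the $T$-transformation, which is the routine part. Under $\tau \mapsto \tau + 1$ the imaginary parts $v$ and $y$ are unchanged, so $p(\sqrt v(n + y/v))$ and $e^{2\pi i B(n,z)}$ are untouched and only $q^{Q(n)}$ acquires the phase $e^{2\pi i Q(n)}$. It then suffices to check that $e^{2\pi i Q(n)}$ is constant on the coset $\mu + L$ with value $e^{2\pi i Q(\mu + \frac12 A^{-1}A^*)}$; this is a standard computation of the quadratic form modulo $1$, in which the characteristic vector $\frac12 A^{-1}A^*$ corrects for the parity of the diagonal of $A$ (concretely, $Q(\ell) \equiv \frac12 (A^*)^t \ell \pmod 1$ for $\ell \in L$). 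Factoring out this constant phase yields the stated $T$-law.

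The substance is the $S$-transformation. Writing $\tilde\tau = -1/\tau$ and $\tilde z = z/\tau$, the left-hand side is $\sum_{n \in \mu + L} \tilde g(n)$, where $\tilde g$ is the summand $g$ with $(\tau,z)$ replaced by $(\tilde\tau,\tilde z)$. Poisson summation over $L$ converts this into $\tfrac{1}{\sqrt{|L'/L|}} \sum_{\nu \in L'} e^{-2\pi i B(\mu,\nu)} \widehat{\tilde g}(\nu)$ (the sign fixed by the Fourier convention); grouping $L'$ into cosets of $L$ and using $\mu \in L'$ to reduce the phase modulo $L$ reassembles the right-hand side as $\sum_{\nu \in L'/L} e^{-2\pi i B(\mu,\nu)}\, \theta_\nu(z;\tau)$, provided one proves the key Fourier identity
\[
\widehat{\tilde g}(w) = (-i\tau)^{\lambda + r/2}\, e^{\pi i Q(A^{-1}A^*)}\, e^{2\pi i Q(z)/\tau}\; g(w),
\]
i.e. that the Fourier transform of the summand at $-1/\tau$ is, up to the claimed automorphy factor, the summand at $\tau$. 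Establishing this is precisely where Vign\'eras' differential equation enters: the condition $\left(\mathcal{E} - \tfrac{\Delta_Q}{4\pi}\right)p = \lambda p$ is the infinitesimal statement that $p$ is a ``generalized homogeneous function of degree $\lambda$'' for the Gaussian Fourier transform, generalizing the classical Bochner--Hecke identity (the Fourier transform of a harmonic homogeneous polynomial of degree $\lambda$ times a Gaussian reproduces itself with eigenvalue $i^{-\lambda}$). The degree $\lambda$ accounts for the power $(-i\tau)^{\lambda}$ and the $r$-dimensional Gaussian transform supplies the remaining $(-i\tau)^{r/2}$.

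The hard part will be proving this Fourier identity in the indefinite setting. The most transparent route is to show that both sides, viewed as functions of $\tau$, satisfy the same first-order heat-type differential equation together with matching initial data as $v \to \infty$, so that a uniqueness argument forces equality; the Vign\'eras equation is exactly the compatibility condition that makes $\widehat{\tilde g}$ solve this equation. The genuinely delicate points are (i) handling the $s$ negative directions of the signature $(r-s,s)$, where the Gaussian and its transform swap roles and produce the correct branch of $(-i\tau)^{\lambda + r/2}$ via a Weil-index computation, and (ii) pinning down the Gauss-sum constant $e^{\pi i Q(A^{-1}A^*)}/\sqrt{|L'/L|}$, which emerges from the finite sum over $L'/L$; both are the technical core of Vign\'eras' original argument and are best imported rather than rederived.
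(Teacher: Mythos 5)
The paper does not actually prove this theorem: it is imported verbatim from Vign\'eras \cite{V}, in the form given in Chapter 8 of \cite{BFOR}, and the surrounding text records only that the proof is ``a Poisson summation argument'' governed by the stated differential equation. Your outline matches that strategy exactly --- the routine $T$-law via constancy of $Q$ modulo $1$ on the coset $\mu+\frac12 A^{-1}A^*+L$, Poisson summation over $L$ for the $S$-law, reduction to a Fourier self-reciprocity identity for the summand, and the Vign\'eras equation playing the role of a Bochner--Hecke condition --- so at the level of architecture there is nothing to object to, but since you explicitly defer the two points you call the technical core (the Fourier identity itself and the Weil-index/Gauss-sum constant) to Vign\'eras' original argument, what you have is an accurate road map rather than a proof.

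There is, moreover, a concrete error in the one step you do argue in detail: the claim that, because $p$ is Schwartz, the summand $g(n)=v^{-\lambda/2}p(\sqrt v(n+y/v))e^{2\pi iB(n,z)}q^{Q(n)}$ is Schwartz in $n$, on the grounds that the rapid decay of $p$ ``dominates the growth of $e^{-2\pi vQ(n)}$ in the directions where $Q$ is negative.'' Schwartz decay beats every polynomial but not, in general, exponential growth. For instance with $r=1$, $Q(x)=-x^2$ and $p(x)=e^{-\pi x^2}$ (Schwartz), one gets $|p(\sqrt v\,n)\,q^{Q(n)}|=e^{-\pi vn^2}e^{2\pi vn^2}=e^{\pi vn^2}$, so the series diverges for every $v>0$; even worse, $p(x)=e^{-\sqrt{1+x^2}}$ is Schwartz with only single-exponential decay. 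This is precisely why Vign\'eras' actual hypothesis is not ``$p$ Schwartz'' but a growth condition on $p(x)e^{-2\pi Q(x)}$ and its derivatives up to order two (integrability after multiplication by polynomials), which forces super-exponential decay of $p$ along the negative cone of $Q$ and is also what legitimizes the Poisson summation; note that the hypothesis as literally stated here would even exclude the classical positive definite case $p\equiv 1$. So the convergence step needs to be replaced by the correct growth condition rather than derived from the Schwartz assumption.
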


\section{Proofs of Theorems \ref{Main} and \ref{P}}
\subsection{Proof of Theorem \ref{Main}}
We recall that with $z = a \tau + b$ we have
\begin{equation*}
\theta_{\xi}(z; \tau) = \sum_{\a \ \text{integral}} \xi(\a + a) N(\a + a)^{\frac{k-1}{2}} e^{2 \pi i \beta(\a, b)} q^{N(\a) + \beta(\a, a)},
\end{equation*}
where $\xi$ is a Hecke character modulo $\m$ with infinity type $\xi_{\infty}(\alpha) = (\alpha/|\alpha|)^{k-1}$.  As stated in Section 1, for each class $C \in Cl(K)$ we choose an ideal $\a_0 \in C^{-1}$ coprime to $\m$ and write $\a_{0} = w_1 \Z + w_2 \Z$.  The ideal $\a \in C$ being integral is equivalent to $\a \a_0 = \alpha \mathcal{O}_{K}$ for some $\alpha \in \a_0$.  The functions in $\theta_{\xi}$ are defined so they can be written as
\begin{align*}
\xi(\a +a) &= \xi_{\m}(\a) \xi_{\infty}(\a +a) =  \xi_{\m}(\a_0)^{-1} \xi_{m}(\alpha) \xi_{\infty}(\a_0)^{-1} \xi_{\infty}(\alpha +a) = \xi(\a_{0})^{-1} \xi_{\m}(\alpha) \xi_{\infty}(\alpha +a), \\
N(\a +a) &=\frac{N(\alpha +a)}{N(\a_0)} =Q_{\a_0}(\alpha +a), \\
\beta(\a, a) &= B_{\a_0}(\alpha, a).
\end{align*}
Now instead of summing over integral ideals we will sum over $\alpha \in \a_{0}$.  If $\epsilon$ is a root of unity in $K$, then $\alpha$ and $\epsilon \alpha$ give the same $\a$ so we need to also divide by the number of roots of unity in $K$ which we denote by $w(D)$.  We now have
\begin{align*}
\theta_{\xi}(z; \tau) &= \sum_{C \in Cl(K)} \frac{1}{w(D)} \xi(\a_0)^{-1} \sum_{\alpha \in \a_0} \xi_{\m}(\alpha) \xi_{\infty}(\alpha +a) \left( \frac{N(\alpha +a)}{N(\a_0)} \right)^{\frac{k-1}{2}} e^{2 \pi i B_{\a_0}(\alpha, b)} q^{Q_{\a_0}(\alpha) + B_{\a_0}(\alpha, a)} \\
&= \sum_{C \in Cl(K)} \frac{1}{w(D)} \xi(\a_{0})^{-1} N(\a_0)^{\frac{1-k}{2}} \sum_{\alpha \in \a_0} \xi_{\m}(\alpha) (\alpha +a)^{k-1} e^{2 \pi i B_{\a_0}(\alpha, b)}q^{Q_{\a_0}(\alpha) + B_{\a_0}(\alpha, a)}.
\end{align*}
We simplify the terms appearing as exponents as
\begin{align*}
B(\alpha, b) + Q(\alpha) \tau + B(\alpha, a) \tau = B(\alpha, a \tau + b) +Q(\alpha) \tau = B(\alpha, z) +Q(\alpha) \tau
\end{align*}
so we have
\begin{equation*}
\theta_{\xi}(z; \tau) = \sum_{C \in Cl(K)} \frac{1}{w(D)} \xi(\a_0)^{-1} N(\a_0)^{\frac{1-k}{2}} \phi_{\xi, C^{-1}}(z; \tau),
\end{equation*}
where
\begin{equation*}
\phi_{\xi, C^{-1}}(z; \tau) = \sum_{\alpha \in \a_0} \xi_{\m}(\alpha) (\alpha +a)^{k-1} e^{2 \pi i B_{\a_0}(\alpha, z)} q^{Q_{\a_0}(\alpha)}.
\end{equation*}
We choose to write $\alpha \in \a$ as $\alpha = n_1 w_1 + n_2 w_2$ for $(n_1, n_2) \in \Z^2$.  With this notation we have $Q_{\a_0}(\alpha) = Q_{\a_0}(n) = a n_{1}^2 + bn_1 n_2 + c n_{2}^2$ where
\begin{equation*}
a= \frac{w_1 \overline{w_1}}{N(\a_0)}, \quad b = \frac{w_1 \overline{w_2} + \overline{w_1} w_2}{N(\a_0)}, \quad c= \frac{w_2 \overline{w_2}}{N(\a_0)}
\end{equation*} 
are integers.  We also define the spherical polynomial
\begin{equation} \label{spherical}
P_{k-1}(n) := \alpha^{k-1} =(n_1 w_1 + n_2 w_2)^{k-1}.
\end{equation}
As noted above, all spherical polynomials (of degree $k-1$) with respect to $Q_{\a_0}$ are linear combinations of $\alpha^{k-1}$ and $\overline{\alpha}^{k-1}$.  Therefore we have $\Delta_{Q_{\a_0}}(P_{k-1}(n)) =0$.  It can also be seen that for all $c \in \R$, $P_{k-1}(cn) = c^{k-1}P_{k-1}(n)$ so $\mathcal{E}(P_{k-1}(n)) = (k-1)P_{k-1}(n)$ (see for example \cite{Stopple}).  

Then by the theorem of Vign\'{e}ras if we let $\mathds{1}$ be the trivial character the function $\phi_{\mathds{1},C}(z; \tau) = \phi_{C}(z; \tau)$ satisfies the following transformation properties:
\begin{align*}
\phi_{C}(z + \lambda \tau + \mu; \tau) &= e^{-2 \pi i B_{C}(z, \lambda)}q^{-Q_{C}(\lambda)} \phi_{C}(z; \tau), \qquad \lambda \in \Z^2, \mu \in A^{-1}\Z^2, \\
\phi_{C}(z; \tau +1) &= \phi_{C} (z; \tau), \\
\phi_{C} \left(\frac{z}{\tau}; -\frac{1}{\tau} \right) &= \frac{i (-i \tau)^{k}}{\sqrt{-\det(A)}} \sum_{m \in  A^{-1} \Z^2/\Z^2} e^{\frac{2 \pi i}{\tau} Q_{C}(z+m \tau)} \phi_{C}(z +m \tau; \tau).
\end{align*}
From these properties it is shown in \cite{Choie} that $\phi_{C}(z; \tau)$ is a weight $k$ Jacobi form on $\Gamma_{0}(|D|)$ with character $\chi_{D}(\cdot) = \left(\frac{D}{\cdot} \right)$.  The function $\phi_{\xi,C}(z; \tau)$ is just $\phi_{C}$ twisted by the character $\xi_{\m}$ and so $\phi_{\xi,C}$ is a Jacobi form of weight $k$ on $\Gamma_{0}(|D| N(\m))$ with character $\chi_{D}\xi_{\m}$.  Twisting by a character is standard (see for example \cite{CS}).  Theorem \ref{Main} then follows as $\theta_{\xi}$ is just a linear combination of $\phi_{\xi,C}$ functions.

\subsection{Proof of Theorem \ref{P}}
We will again prove the result for the trivial character and then the full result will follow either by twisting by the suitable character or by inserting it in the definition of $f_{\tau}(m)$ below.  For an integer $k \geq 1$, define
\begin{equation*}
\phi_{k,C}^{a,b}(\tau) = \sum_{m \in \Z^2} P_{k-1}(m+a) e^{2 \pi i B_{C}(m+a,b)} q^{Q_{C}(m+a)}
\end{equation*}
with $P_{k-1}$ as in equation \eqref{spherical} so we have
\begin{equation*}
\phi_{C}(a \tau + b; \tau) = e^{-2 \pi i B_{C}(a,b)} q^{-Q_{C}(a)} \phi_{k,C}^{a,b}(\tau).
\end{equation*}
Define
\begin{equation*}
f_{\tau}(m) := P_{k-1}(m) e^{2 \pi i Q_{C}(m) \tau}
\end{equation*}
and recall the Fourier transform is given by
\begin{equation*}
\mathcal{F}(f_{\tau}(m))(n) = \int_{\R^2} f_{\tau}(m) e^{- 2 \pi i B_{C}(n,m)} dm.
\end{equation*}
In \cite{V} Vign\'{e}ras showed that $\mathcal{F}(f_{-\frac{1}{\tau}}(m))(n) = -i \tau^k f_{\tau}(n)$.  Let $N$ be the level of $Q_{C}$ as in Theorem \ref{P} and let $\tau \mapsto N\tau$ so we have $\mathcal{F}(f_{-\frac{1}{N\tau}}(m))(n) = -i (N\tau)^k f_{N\tau}(n)$.  Using standard properties of the Fourier transform we thus have
\begin{align*}
\mathcal{F} \left( e^{2 \pi i B_{C}(m+a, b)} f_{-\frac{1}{N\tau}}(m+a) \right)(n) &= -i(N\tau)^k e^{2\pi i B_{C}(a,n)} f_{N\tau}(n-b) \\
&= -i (N\tau)^k q^{NQ_{C}(b)} P_{k-1}(n-b) e^{2 \pi i B_{C}(n, -bN\tau+a)} q^{NQ_{C}(n)}.
\end{align*}
The function inside the Fourier transform is
\begin{align*}
e^{2 \pi i B_{C}(m+a, b)} f_{-\frac{1}{N\tau}}(m+a) &= P_{k-1}(m+a) e^{2 \pi i \left(B_{C}(m+a,b) - \frac{Q_{C}(m+a)}{N\tau} \right)} \\
&= e^{2 \pi i \left(B_{C}(a,b) - \frac{Q_{C}(a)}{N\tau} \right)} P_{k-1}(m+a) e^{2 \pi i \left(B_{C} \left(m, \frac{bN\tau -a}{N\tau} \right) - \frac{Q_{C}(m)}{N\tau} \right)}.
\end{align*}
We then use the Poisson summation formula to obtain
\begin{equation*}
\sum_{m \in \Z^2} e^{2 \pi i B_{C}(m+a,b)} f_{-\frac{1}{N\tau}}(m+a) = \frac{-i(N\tau)^k}{|A^{-1}\Z^2/\Z^2|} \sum_{n \in A^{-1} \Z^2} e^{2 \pi i B_{C}(a,n)} f_{N\tau}(n-b).
\end{equation*}
This is equivalent to
\begin{align*}
&e^{2 \pi i \left(B_{C}(a,b) - \frac{Q_{C}(a)}{N\tau} \right)} \sum_{m \in \Z^2} P_{k-1}(m+a) e^{2\pi i \left(\frac{B_{C}\left(m,\frac{bN\tau -a}{\tau}\right)}{N} - \frac{Q_{C}(m)}{N\tau} \right)} \\
&= -i N^{k-1}\tau^k e^{2 \pi i NQ_{C}(b)\tau} \sum_{n \in \Z^2} P_{k-1}\left(-\frac{n}{N}-b\right)e^{2 \pi i \left(B_{C}\left(\frac{n}{N}, bN\tau -a \right) + NQ_{C}\left(\frac{n}{N}\right) \tau \right)} \\
&= (-1)^k i \tau^k e^{2 \pi i NQ_{C}(b)\tau} \sum_{n \in \Z^2} P_{k-1}(n+Nb) e^{2 \pi i \left( \frac{B_{C}(n,bN\tau -a)}{N} + \frac{Q_{C}(n)\tau}{N} \right)}.
\end{align*}
We rearrange and combine the pre-factors as
\begin{equation*}
e^{2\pi i \left(NQ_{C}(b)\tau + \frac{Q_{C}(a)}{N\tau} - B_{C}(a,b) \right)} = e^{2 \pi i \frac{Q_{C}(bN\tau -a)}{N\tau}}
\end{equation*}
and set $z = bN\tau -a$ which allows us to obtain
\begin{equation} \label{Poisson1}
\sum_{m \in \Z^2} P_{k-1}(m+a) e^{2 \pi i \left( B_{C}^{*} \left(m, \frac{z}{\tau} \right) - \frac{Q_{C}^{*}(m)}{\tau}\right)} = (-1)^k i \tau^k e^{2 \pi i \frac{Q_{C}^{*}(z)}{\tau}} \sum_{n \in \Z^2} P_{k-1}(n+Nb) e^{2 \pi i (B_{C}^{*}(n,z) + Q_{C}^{*}(n)\tau)}.
\end{equation}
In particular, when $k=1$ we have $\phi_{C}^{*} \left(\frac{z}{\tau}; -\frac{1}{\tau} \right) = -i\tau e^{2 \pi i \frac{Q_{C}^{*}(z)}{\tau}} \phi_{C}^{*}(z;\tau)$.  Twisting by the character $\xi_{\m}$ as in \cite{CS} completes the proof of part (3) of Theorem \ref{P}.  The first two parts follow immediately from the theorem of Vign\'{e}ras or simply by making a suitable change of variable in the sum defining $\phi_{\xi, C}^{*}$.

\section{Examples and applications}
\subsection{Even powers of $\eta(\tau)$ with complex multiplication}
In \cite{Serre} Serre explicitly constructed Hecke characters $\xi$ so that $\chi_{D} \xi_{\m} = \epsilon^k$ for positive integers $k$ where $\epsilon$ is the character of the Dedekind eta function $\eta(\tau)$.  In doing so Serre showed that the only even powers $\eta^{r}(\tau)$ with complex multiplication are $r \in \{2, 4, 6, 8, 10, 14, 26\}$.  In this section we will construct Jacobi forms with complex multiplication with nearly the same Hecke characters constructed by Serre which sit above the powers of $\eta$ with CM.  There are some small differences between the characters constructed here and the ones Serre constructed in \cite{Serre}.  First off, all of the characters here will have infinity type with exponent $0$ and then differentiating the Jacobi variable the same number of times as the exponent in Serre's work will allow us to obtain his identities.  Secondly, Serre's characters are not unitary, so they have infinity type $\xi_{\infty}(\alpha)=\alpha^{k-1}$ rather than $\xi_{\infty}(\alpha) = \left(\frac{\alpha}{|\alpha|} \right)^{k-1}$ which is just a matter of convention. Because. we are always choosing the infinity part to have exponent zero we are really just computing the character $\xi_{\m}$ on $\left(\mathcal{O}_{K}/\m \right)^{\times}$ such that $\xi_{\m}(\alpha) \alpha^{k-1}$ is Serre's character.  The examples for $r \neq 26$ allow us to reprove the celebrated Macdonald identities connected to Lie algebras \cite{M} which have also recently been reconstructed using the theory of Theta blocks by Gritsenko, Skoruppa, and Zagier \cite{GSZ}.  The only identity that appears to have been written down in a similar form is the $r=8$ identity which first appeared in \cite{S2} and then was called a theta quark in \cite{GSZ}.  

In order to state the identities we introduce the {\bf Jacobi theta function}
\begin{equation*}
\theta(z; \tau) \coloneqq q^{\frac{1}{8}} (\zeta^{\frac{1}{2}} - \zeta^{-\frac{1}{2}}) \prod_{n \geq 1} (1-q^n) (1-\zeta q^n)(1-\zeta^{-1}q^n) = \sum_{n \in \Z} \left(\frac{-4}{n} \right) \zeta^{\frac{n}{2}} q^{\frac{n^2}{8}}.
\end{equation*}
The Jacobi theta function is a Jacobi form of weight $\frac{1}{2}$, index $\frac{1}{2}$, and character $\epsilon^3$.  For completeness we also present a special Jacobi form $\theta^{*}(z; \tau) \coloneqq \frac{\eta(\tau) \theta(2z; \tau)}{\theta(z; \tau)}$ and standard shifts of both $\theta$ and $\theta^{*}$.  Note that $\theta^{*}$ is a weight $\frac{1}{2}$, index $\frac{3}{2}$ Jacobi form with character $\epsilon$.  We will also let $e(a) = e^{2 \pi i a}$ and $\zeta_{i} = e(z_i)$ throughout.  The following two propositions record Jacobi theta functions which can be viewed as building blocks for our purposes.
\begin{Prop} \label{BB}
Assume the notation above.  Then we have the following representations of the Jacobi theta functions.
\begin{align*}
\theta_{2}(z; \tau) &\coloneqq -i \theta \left(z + \frac{1}{2}; \tau \right) = \sum_{n \in \Z} \left(\frac{4}{n} \right) \zeta^{\frac{n}{2}} q^{\frac{n^2}{8}} =\frac{\eta(\tau)^2 \theta(2z; 2 \tau)}{\eta(2 \tau) \theta(z; \tau)}, \\
\theta_{3}(z; \tau) &\coloneqq -i \zeta^{\frac{1}{2}} q^{\frac{1}{8}} \theta \left( z + \frac{1}{2} + \frac{\tau}{2}; \tau \right) = \sum_{n \in \Z} \left(\frac{4}{n} \right) \zeta^{\frac{n+1}{2}} q^{\frac{(n+1)^2}{8}} = \frac{\eta \left(\frac{\tau}{2} \right) \eta(2 \tau) \theta(z; \tau) \theta(2z; \tau)}{\eta(\tau) \theta \left(z; \frac{\tau}{2} \right) \theta(2z; 2 \tau)}, \\
\theta_{4}(z; \tau) &\coloneqq - \zeta^{\frac{1}{2}} q^{\frac{1}{8}} \theta \left(z + \frac{\tau}{2}; \tau \right) = -\sum_{n \in \Z} \left( \frac{-4}{n} \right) \zeta^{\frac{n+1}{2}} q^{\frac{(n+1)^2}{8}} = \frac{\eta(\tau)^2 \theta \left(z; \frac{\tau}{2} \right)}{\eta \left(\frac{\tau}{2} \right) \theta(z; \tau)}.
\end{align*}
\end{Prop}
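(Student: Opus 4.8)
The plan is to establish each of the three displayed identities in two independent pieces: the middle equality (the Fourier series expansion) and the right-hand equality (the infinite-product, i.e.\ eta-quotient, form), treating $\theta_2$, $\theta_3$, $\theta_4$ uniformly as translates of $\theta(z;\tau)$ by the half-periods $\tfrac12$, $\tfrac{\tau}{2}$, and $\tfrac12+\tfrac{\tau}{2}$. In each case I only need to substitute the relevant shift into the two given representations of $\theta(z;\tau)$ and simplify, so the entire proposition reduces to bookkeeping.

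For the series representations I would substitute the defining shift directly into $\theta(z;\tau)=\sum_{n\in\Z}\left(\frac{-4}{n}\right)\zeta^{n/2}q^{n^2/8}$, writing $\zeta^{n/2}=e^{\pi i z n}$. The shift $z\mapsto z+\tfrac12$ multiplies the $n$-th term by $e^{\pi i n/2}=i^{n}$; since $\left(\frac{-4}{n}\right)$ vanishes unless $n$ is odd, the elementary identity $-i\left(\frac{-4}{n}\right)i^{n}=\left(\frac{4}{n}\right)$, checked on the two odd residue classes modulo $4$, absorbs the prefactor $-i$ and yields $\theta_2(z;\tau)=\sum_n\left(\frac4n\right)\zeta^{n/2}q^{n^2/8}$. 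The shift $z\mapsto z+\tfrac\tau2$ instead multiplies the $n$-th term by $q^{n/4}$, and the extra prefactor $\zeta^{1/2}q^{1/8}$ completes the square via $\frac{n^2}{8}+\frac{n}{4}+\frac18=\frac{(n+1)^2}{8}$ while sending $\zeta^{n/2}\mapsto\zeta^{(n+1)/2}$, producing the stated series for $\theta_4$; composing the two shifts handles $\theta_3$. These are formal manipulations of an absolutely convergent series and should be immediate.

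For the product representations I would substitute the same shifts into the Jacobi triple product $\theta(z;\tau)=q^{1/8}(\zeta^{1/2}-\zeta^{-1/2})\prod_{n\geq1}(1-q^n)(1-\zeta q^n)(1-\zeta^{-1}q^n)$. Under $z\mapsto z+\tfrac12$ one has $\zeta\mapsto-\zeta$, so each factor $1-\zeta^{\pm1}q^n$ becomes $1+\zeta^{\pm1}q^n$ and the prefactor becomes $i(\zeta^{1/2}+\zeta^{-1/2})$; I would then recognize the resulting product as the quotient $\eta(\tau)^2\theta(2z;2\tau)/(\eta(2\tau)\theta(z;\tau))$ by expanding the target using $\eta(\tau)=q^{1/24}\prod_{n\geq1}(1-q^n)$ together with the triple products for $\theta(z;\tau)$ and $\theta(2z;2\tau)$ (where $q\mapsto q^2$ and $\zeta\mapsto\zeta^2$), then cancelling common factors and reindexing. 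The shift $z\mapsto z+\tfrac\tau2$ sends $\zeta\mapsto\zeta q$, which telescopes the product and collapses, after analogous eta/theta substitutions, to $\eta(\tau)^2\theta(z;\tfrac\tau2)/(\eta(\tfrac\tau2)\theta(z;\tau))$; $\theta_3$ follows once more by composing the two shifts.

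The main obstacle will be this product-side bookkeeping rather than anything conceptual. I must track the branch choices for the half-integer powers $\zeta^{1/2}$ and $q^{1/8}$ so that prefactors recombine with the correct sign (for instance $(-\zeta)^{1/2}=i\zeta^{1/2}$, whence $(-\zeta)^{1/2}-(-\zeta)^{-1/2}=i(\zeta^{1/2}+\zeta^{-1/2})$), and I must reindex the dilated products $\prod(1-q^{2n})$ and $\prod(1\pm\zeta q^n)$ so they match the target eta-quotients factor by factor. These are precisely the classical product relations among the Jacobi theta-nulls, so once each substitution is written out the verification is routine; the sign and index conventions are the only places where care is genuinely required.
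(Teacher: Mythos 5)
The paper states Proposition \ref{BB} without proof, presenting these as classical building-block identities, so there is no argument of the paper's to compare against; your verification by substituting the half-period shifts $z\mapsto z+\tfrac12$, $z\mapsto z+\tfrac{\tau}{2}$, $z\mapsto z+\tfrac12+\tfrac{\tau}{2}$ into both the series and the triple-product representations of $\theta(z;\tau)$ is the standard route and is sound. The series side is completely correct: the sign identity $-i\left(\frac{-4}{n}\right)i^{n}=\left(\frac{4}{n}\right)$ on odd $n$, the completion of the square $\tfrac{n^2}{8}+\tfrac{n}{4}+\tfrac18=\tfrac{(n+1)^2}{8}$, and the composition of the two shifts for $\theta_3$ all check out. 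On the product side your strategy (expand the target eta-theta quotients via $\eta(\tau)=q^{1/24}\prod(1-q^n)$ and the triple product, then cancel and reindex) does reproduce all three right-hand sides, and your warning about branch choices such as $(-\zeta)^{1/2}=i\zeta^{1/2}$ is exactly where the care is needed. One slip to correct before writing it out: since $\zeta=e^{2\pi i z}$, the shift $z\mapsto z+\tfrac{\tau}{2}$ sends $\zeta\mapsto\zeta q^{1/2}$, not $\zeta\mapsto\zeta q$ as you wrote in the product paragraph (your own series computation, with its factor $q^{n/4}$ on the $n$-th term, already implies the correct substitution). Carried through literally, $\zeta\mapsto\zeta q$ would telescope to the full-period translate $\theta(z+\tau;\tau)$ and give the wrong identity; with $\zeta\mapsto\zeta q^{1/2}$ the prefactor $-\zeta^{1/2}q^{1/8}$ combines with $(\zeta^{1/2}q^{1/4}-\zeta^{-1/2}q^{-1/4})$ to give $1-\zeta q^{1/2}$, which absorbs into the shifted product and matches $\eta(\tau)^2\theta\left(z;\tfrac{\tau}{2}\right)/\bigl(\eta\left(\tfrac{\tau}{2}\right)\theta(z;\tau)\bigr)$ as claimed.
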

We have the following analogous proposition for $\theta^*$.
\begin{Prop} \label{BB2}
Assume the notation above.  Then we have the following representations of the Jacobi theta functions.
\begin{align*}
\theta^{*}(z; \tau) &= \sum_{n \in \Z} \left(\frac{12}{n} \right) \zeta^{\frac{n}{2}} q^{\frac{n^2}{24}} = \frac{\eta(\tau) \theta(2z; \tau)}{\theta(z; \tau)}, \\
\theta_{2}^{*}(z; \tau) &\coloneqq -i\theta^{*} \left(z + \frac{1}{2}; \tau \right) = \sum_{n \in \Z} \left(\frac{-12}{n} \right) \zeta^{\frac{n}{2}} q^{\frac{n^2}{24}} = \frac{\eta(2 \tau) \theta(z; \tau) \theta(2z; \tau)}{\eta(\tau) \theta(2z; 2 \tau)}, \\
\theta_{3}^{*}(z; \tau) &\coloneqq i \zeta^{\frac{3}{2}} q^{\frac{3}{8}} \theta^{*} \left(z + \frac{1}{2} + \frac{\tau}{2}; \tau \right) = - \sum_{n \in \Z} \left(\frac{-12}{n} \right) \zeta^{\frac{n+3}{2}} q^{\frac{(n+3)^2}{24}} = \frac{\eta(\tau)^2  \theta \left( z ; \frac{\tau}{2} \right) \theta(2z; 2 \tau)}{\eta \left(\frac{\tau}{2} \right) \eta(2 \tau) \theta(z; \tau)}, \\
\theta_{4}^{*}(z; \tau) &\coloneqq \zeta^{\frac{3}{2}} q^{\frac{3}{8}} \theta^{*} \left( z + \frac{\tau}{2}; \tau \right) = \sum_{n \in \Z} \left(\frac{12}{n} \right) \zeta^{\frac{n+3}{2}} q^{\frac{(n+3)^2}{24}} = \frac{\eta \left(\frac{\tau}{2} \right) \theta(z; \tau) \theta(2z; \tau)}{\eta(\tau) \theta \left(z; \frac{\tau}{2} \right)}.
\end{align*}
\end{Prop}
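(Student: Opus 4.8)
The plan is to take the product expression for $\theta^{*}$ as given (it is the definition $\theta^{*}(z;\tau)=\eta(\tau)\theta(2z;\tau)/\theta(z;\tau)$) and to prove its Fourier expansion first, then to bootstrap the series and the eta-quotients for $\theta_2^{*},\theta_3^{*},\theta_4^{*}$ from it, using only the defining shifts together with the quasi-periodicity of $\theta$ and the half-period relations already recorded in Proposition \ref{BB}.

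First I would establish
\begin{equation*}
\frac{\eta(\tau)\theta(2z;\tau)}{\theta(z;\tau)}=\sum_{n\in\Z}\left(\frac{12}{n}\right)\zeta^{\frac{n}{2}}q^{\frac{n^2}{24}}.
\end{equation*}
Substituting the Jacobi triple product forms of $\theta(z;\tau)$ and $\theta(2z;\tau)$, the factor $q^{\frac18}$ cancels and the common factor $\zeta^{1/2}-\zeta^{-1/2}$ cancels against $\zeta-\zeta^{-1}=(\zeta^{1/2}-\zeta^{-1/2})(\zeta^{1/2}+\zeta^{-1/2})$, leaving
\begin{equation*}
q^{\frac{1}{24}}\left(\zeta^{\frac12}+\zeta^{-\frac12}\right)\prod_{n\geq 1}\frac{(1-q^n)(1-\zeta^2 q^n)(1-\zeta^{-2}q^n)}{(1-\zeta q^n)(1-\zeta^{-1}q^n)}.
\end{equation*}
This is precisely the product side of the quintuple product identity, whose sum side is the asserted series; invoking that identity closes this step. (Alternatively one may note that $\theta^{*}$ is a Jacobi form of weight $\tfrac12$, index $\tfrac32$ and character $\epsilon$, that the relevant space is one-dimensional and spanned by a single theta series with characteristic, and then match leading Fourier coefficients.)

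Next I would read off the three shifted functions from their definitions. For the sum sides this is direct substitution into the $\theta^{*}$-series: the shift $z\mapsto z+\tfrac12$ introduces $e^{\pi i n/2}=i^{n}$, and on the support of $(12/n)$ (odd $n$ coprime to $3$) the prefactor $-i$ converts $-i\,i^{n}$ into $(-4/n)$, so that $(12/n)(-4/n)=(-12/n)$ produces the series for $\theta_2^{*}$; the $\tfrac{\tau}{2}$-shifts in $\theta_3^{*}$ and $\theta_4^{*}$ are handled by completing the square via $\tfrac{3}{8}+\tfrac{n}{4}+\tfrac{n^2}{24}=\tfrac{(n+3)^2}{24}$, after which the prefactors $\zeta^{3/2}q^{3/8}$ (and $i$ times it) reindex the power of $\zeta$ to $(n+3)/2$ and supply the overall sign for $\theta_3^{*}$. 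For the product sides I would substitute the definition $\theta^{*}(w;\tau)=\eta(\tau)\theta(2w;\tau)/\theta(w;\tau)$ at the shifted argument $w$ and then reduce $\theta(2w;\tau)$ and $\theta(w;\tau)$ to the building blocks: the relations $\theta(w+1;\tau)=-\theta(w;\tau)$ and $\theta(w+\tau;\tau)=-q^{-1/2}e^{-2\pi i w}\theta(w;\tau)$ dispose of integer and $\tau$ shifts, while the genuine half-period shifts are exactly the $\theta_2$ and $\theta_4$ formulas of Proposition \ref{BB} (which is where the parameters $2\tau$ and $\tfrac{\tau}{2}$ respectively enter, with both appearing for $\theta_3^{*}$). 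Collecting the prefactors then yields the stated eta-quotients.

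The only genuinely nontrivial analytic input is the quintuple product identity for $\theta^{*}$ itself; once that base case is secured everything else is bookkeeping. Accordingly the hard part is clerical rather than conceptual: maintaining consistent branch choices for the half-powers $\zeta^{1/2}$, $q^{1/8}$ and for $i^{n}$, completing the squares in the $\tfrac{\tau}{2}$-shifts without sign errors, and checking the Kronecker-symbol identities modulo $12$ (such as $(-48/n)=(-12/n)$) on the residues that actually contribute. I would organize all of these verifications by residue class so that each of the four series is handled uniformly.
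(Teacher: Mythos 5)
Your proposal is correct, and it is worth noting that the paper itself offers no proof of Proposition \ref{BB2}: both Propositions \ref{BB} and \ref{BB2} are simply recorded as standard building blocks, so your argument supplies a derivation the paper omits. Your reduction is the natural one. The base identity
\begin{equation*}
\frac{\eta(\tau)\theta(2z;\tau)}{\theta(z;\tau)}
= q^{\frac{1}{24}}\left(\zeta^{\frac12}+\zeta^{-\frac12}\right)\prod_{n\geq 1}\frac{(1-q^n)(1-\zeta^2 q^n)(1-\zeta^{-2}q^n)}{(1-\zeta q^n)(1-\zeta^{-1}q^n)}
= \sum_{n\in\Z}\left(\frac{12}{n}\right)\zeta^{\frac{n}{2}}q^{\frac{n^2}{24}}
\end{equation*}
is indeed the quintuple product identity in the normalization used by Gritsenko--Skoruppa--Zagier, and your cancellation of the $q^{\frac18}$ factors and of $\zeta^{\frac12}-\zeta^{-\frac12}$ is right. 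The bookkeeping for the shifts also checks out: on the support of $\left(\frac{12}{n}\right)$ one has $i^{n}=i\left(\frac{-4}{n}\right)$, so $-i\,i^{n}=\left(\frac{-4}{n}\right)$ and $\left(\frac{12}{n}\right)\left(\frac{-4}{n}\right)=\left(\frac{-48}{n}\right)=\left(\frac{-12}{n}\right)$ since $\left(\frac{4}{n}\right)=\left(\frac{16}{n}\right)=1$ for odd $n$; the exponent computation $\frac38+\frac n4+\frac{n^2}{24}=\frac{(n+3)^2}{24}$ handles the $\frac{\tau}{2}$-shifts; and for $\theta_3^{*}$ the extra factor $i^{n+1}=-\left(\frac{-4}{n}\right)$ produces the overall minus sign in the stated series. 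For the product sides, your use of $\theta(w+1;\tau)=-\theta(w;\tau)$ and $\theta(w+\tau;\tau)=-q^{-\frac12}e^{-2\pi i w}\theta(w;\tau)$ yields the clean intermediate identities $\theta_j^{*}(z;\tau)=\eta(\tau)\theta(2z;\tau)/\theta_j(z;\tau)$ for $j=2,3,4$, after which substituting the eta-quotients of Proposition \ref{BB} gives exactly the stated formulas; I verified this for all three cases and the prefactors cancel as you predict. The one presentational caveat is that the displayed product is not literally the five-factor product of the classical quintuple product statement (it is a ratio), so you should either cite the identity in this $\eta\theta(2z)/\theta(z)$ form or fall back on your alternative argument via the one-dimensionality of the space of weight $\frac12$, index $\frac32$ Jacobi forms with character $\epsilon$, which is Skoruppa's result and closes the base case cleanly.
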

For convenience we supress the dependence on $\tau$ whenever that variable is not weighted.  For example, $\theta(z; \tau) \theta(2z; \tau)$ will be written as $\theta(z) \theta(2z)$.  We will also denote $\phi_{\xi}^{*}$ by $\phi_{r}$ in this section where the Jacobi form specializes to $\eta(\tau)^r$.  In all cases except $r=26$ we omit proofs as they can be found in many places \cite{M,GSZ}.  In all cases the corollaries are obtained by applying $D_{z}^{\frac{r}{2}-1} \left[\phi_{r}(z;\tau)\right]_{z=0}$ where either $z=z_1 +iz_2$ or $z=\frac{z_1 +\sqrt{-3}z_2}{2}$ depending on if the form has CM by $\Q(i)$ or $\Q(\sqrt{-3})$ respectively.

\subsubsection{$r=2$}
Let $K=\Q(i)$ and let $\mathcal{O}_{K}$ be its ring of integers.  The Hecke character $\xi$ can be chosen to be either of order $4$ and conductor $6$ described in \cite{Serre}.  When $\alpha = n_1 + in_2$ it is known that $\xi_{6}(\alpha) = \left(\frac{12}{n_1 n_2}\right)$ (see page 59 of \cite{CS}).  We have quadratic form $Q^{*}(\alpha) = \frac{n_{1}^2 + n_{2}^2}{24}$ and so we find
\begin{equation*}
\phi_{2}(z; \tau) = \sum_{\alpha \in \mathcal{O}_{K}} \xi_{6}(\alpha) e^{2 \pi i B^{*}(\alpha, z)} q^{Q^{*}(\alpha)} = \sum_{n_1, n_2 \in \Z} \left(\frac{12}{n_1 n_2} \right) e^{2 \pi i \frac{n_1 z_1 + n_2 z_2}{12}} q^{\frac{n_{1}^2 + n_{2}^{2}}{24}}.
\end{equation*}
\begin{Prop} \label{2}
Assume the notation above.  Then
\begin{align*}
\phi_{2}(z; \tau) &= \sum_{n_1, n_2 \in \Z} \left(\frac{12}{n_1 n_2} \right) \zeta_{1}^{\frac{n_1}{12}} \zeta_{2}^{\frac{n_2}{12}} q^{\frac{n_{1}^2 + n_{2}^{2}}{24}} \\
&= \theta^{*} \left(\frac{z_1}{6} \right) \theta^{*} \left( \frac{z_2}{6} \right)
\end{align*}
\end{Prop}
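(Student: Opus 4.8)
The plan is to exploit that for $K=\Q(i)$ the quadratic form $Q^{*}(\alpha)=\frac{n_1^2+n_2^2}{24}$ is diagonal, so the double sum defining $\phi_2$ separates into a product of two identical single sums, each of which is exactly a $\theta^{*}$ series from Proposition \ref{BB2}.

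First I would dispose of the first equality, which is merely a change of notation. Since $\zeta_i=e(z_i)$, the exponential $e^{2\pi i(n_1 z_1+n_2 z_2)/12}$ appearing in the displayed formula for $\phi_2$ just above the Proposition equals $\zeta_1^{n_1/12}\zeta_2^{n_2/12}$. Implicit here is that for $K=\Q(i)$ the bilinear form $B^{*}(\alpha,z)$ carries no cross term, which is precisely the statement that $Q^{*}$ has no $n_1 n_2$ term; this is what lets the two Jacobi variables decouple.

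The substance is the second equality. I would first invoke multiplicativity of the Kronecker symbol in its lower argument to write $\left(\frac{12}{n_1 n_2}\right)=\left(\frac{12}{n_1}\right)\left(\frac{12}{n_2}\right)$. Because the exponent $\frac{n_1^2+n_2^2}{24}$ splits additively as $\frac{n_1^2}{24}+\frac{n_2^2}{24}$, the summand is now a product of a function of $n_1$ alone and a function of $n_2$ alone, so the double sum factors as
\[
\left(\sum_{n_1\in\Z}\left(\frac{12}{n_1}\right)\zeta_1^{n_1/12}q^{n_1^2/24}\right)\left(\sum_{n_2\in\Z}\left(\frac{12}{n_2}\right)\zeta_2^{n_2/12}q^{n_2^2/24}\right).
\]
Finally I would match each factor with $\theta^{*}$: in the series $\theta^{*}(z;\tau)=\sum_{n\in\Z}\left(\frac{12}{n}\right)\zeta^{n/2}q^{n^2/24}$ of Proposition \ref{BB2}, the substitution $z\mapsto z_i/6$ gives $\zeta^{n/2}=e^{2\pi i(z_i/6)(n/2)}=\zeta_i^{n/12}$, so each factor is $\theta^{*}(z_i/6)$ (with the $\tau$-dependence suppressed per the convention announced above), yielding $\theta^{*}(z_1/6)\theta^{*}(z_2/6)$.

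There is no genuine obstacle here; the argument is essentially bookkeeping, which is why the proof is stated as a Proposition rather than given in detail. The only points that deserve an explicit check are that the Kronecker symbol truly splits across $n_1 n_2$ and that the $\zeta$-exponents align under $z\mapsto z_i/6$ — these are exactly the two substitutions that reduce the index-one CM theta series for $\Q(i)$ to a product of one-variable theta functions, and they fix the implicit constant to be $1$.
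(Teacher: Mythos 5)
Your proof is correct: the paper explicitly omits the proofs of these Macdonald-type identities for $r\neq 26$ (deferring to \cite{M,GSZ}), and your argument — factoring $\left(\frac{12}{n_1 n_2}\right)=\left(\frac{12}{n_1}\right)\left(\frac{12}{n_2}\right)$ by multiplicativity of the Kronecker symbol, using the diagonality of $Q^{*}$ to split the double sum, and matching each factor with the series for $\theta^{*}$ under $z\mapsto z_i/6$ — is exactly the intended verification. No gaps; the exponent bookkeeping $\zeta^{n/2}\mapsto\zeta_i^{n/12}$ checks out.
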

We define the numbers $\tau_{2}(n)$ by
\begin{equation*}
\sum_{n \equiv 1 \pmod{12}} \tau_{2}(n) q^{\frac{n}{12}} = q^{\frac{1}{12}}\prod_{n \geq 1} (1-q^n)^2.
\end{equation*}
Then by setting $z_1 = z_2 =0$ above we have the following.
\begin{Cor}
Assume the notation above.  Then
\begin{equation*}
\tau_{2}(n) =\frac{1}{4} \sum_{\substack{ n_1, n_2 \in \Z \\ n_{1}^2 + n_{2}^2 =2n}} \left( \frac{12}{n_1 n_2} \right).
\end{equation*}
\end{Cor}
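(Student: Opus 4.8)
The plan is to specialize the identity of Proposition~\ref{2} at the origin and compare the resulting $q$-series with the definition of $\tau_2(n)$. Setting $z_1 = z_2 = 0$ makes $\zeta_1 = \zeta_2 = 1$, so the sum-side of Proposition~\ref{2} collapses to $\phi_2(0;\tau) = \sum_{n_1, n_2 \in \Z} \left(\frac{12}{n_1 n_2}\right) q^{(n_1^2 + n_2^2)/24}$, while the product-side becomes the theta constant $\theta^{*}(0;\tau)^2$. Thus the whole corollary reduces to evaluating the single-variable constant $\theta^{*}(0;\tau)$ and then extracting coefficients.

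The key computation is $\theta^{*}(0;\tau) = 2\eta(\tau)$, and I would record both routes. Using the series representation from Proposition~\ref{BB2}, and the fact that $\left(\frac{12}{n}\right)$ is even in $n$ while its $n=0$ term vanishes, we get $\theta^{*}(0;\tau) = \sum_{n \in \Z}\left(\frac{12}{n}\right)q^{n^2/24} = 2\sum_{n \geq 1}\left(\frac{12}{n}\right)q^{n^2/24} = 2\eta(\tau)$, the last equality being the classical theta expansion of $\eta$ coming from Euler's pentagonal number theorem. Alternatively, via the product formula $\theta^{*}(z;\tau) = \eta(\tau)\theta(2z;\tau)/\theta(z;\tau)$: near $z=0$ the Remark's formula $D_z\theta(z;\tau)|_{z=0} = \eta(\tau)^3$ gives $\theta(z;\tau) = 2\pi i\,\eta(\tau)^3 z + O(z^2)$, so $\theta(2z;\tau)/\theta(z;\tau) \to 2$ and hence $\theta^{*}(0;\tau) = 2\eta(\tau)$. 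Either way $\theta^{*}(0;\tau)^2 = 4\eta(\tau)^2$.

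Finally I would match coefficients. Since $q^{1/12}\prod_{n\geq 1}(1-q^n)^2 = \eta(\tau)^2$, the definition of $\tau_2$ reads $\eta(\tau)^2 = \sum_{n \equiv 1 \!\pmod{12}} \tau_2(n) q^{n/12}$, so $4\eta(\tau)^2 = \sum_{n \equiv 1 \!\pmod{12}} 4\tau_2(n) q^{n/12}$. Equating this with the sum-side and writing $q^{n/12} = q^{2n/24}$, the coefficient of $q^{n/12}$ on the sum-side is $\sum_{n_1^2 + n_2^2 = 2n}\left(\frac{12}{n_1 n_2}\right)$, which after dividing by $4$ yields the stated formula. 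The only points needing care are the $0/0$ nature of the $z\to 0$ limit in the product route (resolved by the derivative formula) and the observation that terms with $\gcd(n_i,12)>1$ drop out since the Kronecker symbol vanishes there; one also checks that nonzero contributions force $n_1^2 + n_2^2 \equiv 2 \pmod{24}$, i.e.\ $n \equiv 1 \pmod{12}$, consistent with the range of $\tau_2$. The main obstacle is genuinely just the theta-constant evaluation $\theta^{*}(0;\tau) = 2\eta(\tau)$; everything else is bookkeeping.
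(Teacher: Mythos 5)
Your proof is correct and follows the paper's route exactly: the paper obtains this corollary simply by setting $z_1 = z_2 = 0$ in Proposition~\ref{2}, leaving the evaluation $\theta^{*}(0;\tau) = 2\eta(\tau)$ and the coefficient bookkeeping implicit. Your write-up supplies precisely those omitted details (correctly, including the factor of $4$ and the congruence $n_1^2 + n_2^2 \equiv 2 \pmod{24}$ forced by the Kronecker symbol), so there is nothing to add.
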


\subsubsection{$r=4$}

Let $K = \Q(\sqrt{-3})$ and $\mathcal{O}_{K}$ its ring of integers.  In \cite{Serre} the Hecke character with conductor $\m = 2 \sqrt{-3}$ is considered such that for all $\a$ coprime to $\m$, $\xi(\a) = \alpha$ where $\alpha$ generates $\a$ such that $ \alpha \equiv^{*} 1 \pmod{2 \sqrt{-3}}$.  The integral ideal $(\alpha)$ is coprime to $(2 \sqrt{-3})$ if $\alpha = \frac{n_1 + n_2 \sqrt{-3}}{2}$ with $n_1 \not\equiv 0 \pmod{3}$ and either both $n_1$ and $n_2$ odd or both $n_1$ and $n_2$ are even and $n_1 \not\equiv n_2 \pmod{4}$. The $\alpha \equiv^* 1 \pmod{2 \sqrt{-3}}$ are of the form $\alpha = 6t+1 + 2r\sqrt{-3}$ or $6t+4 +(2r+1)\sqrt{-3}$ with $t, r \in \Z$.  For each $\alpha$ coprime to $\m$ there exists a unique $0 \leq m \leq 5$ such that $\rho^m \alpha \equiv^* 1 \pmod{2 \sqrt{-3}}$ with $\rho = \frac{1+\sqrt{-3}}{2}$.  We define $\xi_{2\sqrt{-3}}(\alpha) = \rho^m$.  Explicitly, when $n_1 \equiv n_2 \equiv 0 \pmod{2}$ we have $\xi_{\m}(\alpha) = \pm 1$ where $\pm \alpha \equiv^* 1 \pmod{2 \sqrt{-3}}$ and when $n_1 \equiv n_2 \equiv 1 \pmod{2}$ we have
\begin{equation*}
\xi_{\m}(\alpha) = \begin{cases} e\left( \frac{n_1 -3n_2}{12} \right) & \text{if} \ \left(\frac{12}{n_1} \right) \left(\frac{-4}{n_2}\right) =1 \\
e\left( \frac{-n_1 -3n_2}{12} \right) & \text{if} \ \left(\frac{12}{n_1} \right) \left(\frac{-4}{n_2}\right) =-1.
\end{cases}
\end{equation*}
When $n_1$ and $n_2$ are both even and $n_1 \not\equiv n_2 \pmod{4}$ we can equivalently write $\xi_{\m}(\alpha)= \left(\frac{2n_{1}}{3} \right)$.  We thus have
\begin{align*}
\phi_{4}(z; \tau) &= \sum_{\alpha \in \mathcal{O}_{K}} \xi_{2 \sqrt{-3}}(\alpha) e^{2 \pi i B^{*}(\alpha, z)} q^{Q^{*}(\alpha)} \\
&= \sum_{\substack{ n_1, n_2 \equiv 0 \pmod{2} \\ n_1 \not\equiv n_2 \pmod{4}}} \left( \frac{2 n_1}{3} \right) e^{2 \pi i \frac{n_1 z_1 + 3n_2 z_2}{12}} q^{\frac{n_{1}^2 + 3 n_{2}^2}{24}}  \\
&+ \sum_{\left(\frac{12}{n_1} \right) \left(\frac{-4}{n_2} \right) =1}\left(\frac{12}{n_1} \right) \left(\frac{-4}{n_2} \right) e\left(\frac{n_1 -3n_2}{12}\right) e^{2 \pi i \frac{n_1 z_1 + 3n_2 z_2}{12}} q^{\frac{n_{1}^2 + 3n_{2}^2}{24}} \\
&+\sum_{\left(\frac{12}{n_1} \right) \left(\frac{-4}{n_2} \right) =-1}\left(\frac{12}{n_1} \right) \left(\frac{-4}{n_2} \right) e\left(\frac{n_1 +3n_2}{12}\right) e^{2 \pi i \frac{n_1 z_1 + 3n_2 z_2}{12}} q^{\frac{n_{1}^2 + 3n_{2}^2}{24}}.
\end{align*}
\begin{Prop}
Assume the notation above.  Then
\begin{align*}
&\sum_{\substack{ n_1, n_2 \equiv 0 \pmod{2} \\ n_1 \not\equiv n_2 \pmod{4}}} \left( \frac{2 n_1}{3} \right) \zeta_{1}^{\frac{n_1}{12}} \zeta_{2}^{\frac{n_2}{4}} q^{\frac{n_{1}^2 + 3 n_{2}^2}{24}} \\
&= \sum_{n_1, n_2 \in \Z} \left( \frac{n_1}{3} \right) \left(\frac{4}{n_{1}^2 + n_{2}^2} \right) \zeta_{1}^{\frac{n_1}{6}} \zeta_{2}^{\frac{n_2}{2}} q^{\frac{n_{1}^2 + 3 n_{2}^2}{6}} \\
&= \frac{1}{2} \left[\theta_{3}^{*} \left(\frac{z_1}{6} \right) \theta_{3} \left( \frac{z_2}{2} \right) + \theta_{4}^{*} \left( \frac{z_1}{6} \right) \theta_{4} \left( \frac{z_2}{2} \right) \right],
\end{align*}
and
\begin{align*}
&\sum_{\left(\frac{12}{n_1} \right) \left(\frac{-4}{n_2} \right) =1}\left(\frac{12}{n_1} \right) \left(\frac{-4}{n_2} \right) e\left(\frac{n_1 -3n_2}{12}\right) \zeta_{1}^{\frac{n_1}{12}} \zeta_{2}^{\frac{n_2}{4}}q^{\frac{n_{1}^2 + 3n_{2}^2}{24}} \\
&+\sum_{\left(\frac{12}{n_1} \right) \left(\frac{-4}{n_2} \right) =-1}\left(\frac{12}{n_1} \right) \left(\frac{-4}{n_2} \right) e\left(\frac{-n_1 +3n_2}{12}\right) \zeta_{1}^{\frac{n_1}{12}} \zeta_{2}^{\frac{n_2}{4}} q^{\frac{n_{1}^2 + 3n_{2}^2}{24}} \\
&= \frac{1}{2} \sum_{n_1, n_2 \in \Z} \left[ \left( \frac{-12}{n_1} \right) \left( \frac{4}{n_2} \right) - \sqrt{-3} \left( \frac{12}{n_1} \right) \left( \frac{-4}{n_2} \right) \right] \zeta_{1}^{\frac{n_1}{12}} \zeta_{2}^{\frac{n_2}{4}} q^{\frac{n_{1}^2  +3 n_{2}^2}{24}} \\
&= \frac{1}{2} \left[ \theta_{2}^{*} \left(\frac{z_1}{6} \right) \theta_{2} \left( \frac{z_2}{2} \right) - \sqrt{-3} \theta^{*} \left( \frac{z_1}{6} \right) \theta \left( \frac{z_2}{2} \right) \right].
\end{align*}
In particular, we have
\begin{align*}
\phi_{4}(z; \tau) &= \frac{1}{2} \left[ \theta_{3}^{*} \left(\frac{z_1}{6} \right) \theta_{3} \left( \frac{z_2}{2} \right) + \theta_{4}^{*} \left( \frac{z_1}{6} \right) \theta_{4} \left( \frac{z_2}{2} \right) \right] \\
&+ \frac{1}{2} \left[ \theta_{2}^{*} \left(\frac{z_1}{6} \right) \theta_{2} \left(\frac{z_2}{2} \right) - \sqrt{-3} \theta^{*} \left(\frac{z_1}{6} \right) \theta \left(\frac{z_2}{2} \right) \right].
\end{align*}
\end{Prop}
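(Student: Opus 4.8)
The plan is to prove the two displayed identities separately and then add them, since $\phi_4(z;\tau)$ is by construction the sum of its even--even term (the first sum) and its two odd--odd terms. Throughout, every series is absolutely convergent for $\tau \in \HH$ because the $q$-exponents grow quadratically, so all the rearrangements, reindexings, and term-by-term multiplications below are justified. The only arithmetic inputs beyond bookkeeping are the elementary Kronecker-symbol identities $\left(\frac{-12}{n}\right) = \left(\frac{n}{3}\right)$ and $\left(\frac{12}{n}\right) = \left(\frac{-1}{n}\right)\left(\frac{n}{3}\right)$, valid for odd $n$, together with $\left(\frac{2}{3}\right) = -1$. The conceptual reason the split works is that after the substitutions $z \mapsto z_1/6$ and $z \mapsto z_2/2$ the functions $\theta_3^{*}, \theta_4^{*}$ and $\theta_3, \theta_4$ are supported on even indices, so their products populate exactly the even--even sublattice, whereas $\theta^{*}, \theta_2^{*}, \theta, \theta_2$ are supported on odd indices and populate the odd--odd part.

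For the even identity I would first record that the substitution $n_1 = 2a$, $n_2 = 2b$ sends $\zeta_1^{n_1/12}\zeta_2^{n_2/4}q^{(n_1^2+3n_2^2)/24}$ to $\zeta_1^{a/6}\zeta_2^{b/2}q^{(a^2+3b^2)/6}$, turns $\left(\frac{2n_1}{3}\right)$ into $\left(\frac{a}{3}\right)$ (as $4 \equiv 1 \pmod 3$), and — because $\left(\frac{4}{a^2+b^2}\right)$ is the indicator that $a^2+b^2$ is odd — replaces the constraint $n_1 \not\equiv n_2 \pmod 4$ by the factor $\left(\frac{4}{a^2+b^2}\right)$; this identifies the first two expressions. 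For the theta side, evaluating Proposition \ref{BB2} at $z_1/6$ and setting $n_1 = n+3$ gives $\theta_3^{*}(z_1/6) = -\sum_{n_1 \ \mathrm{even}}\left(\frac{n_1}{3}\right)\zeta_1^{n_1/12}q^{n_1^2/24}$ and $\theta_4^{*}(z_1/6) = \sum_{n_1 \ \mathrm{even}}(-1)^{n_1/2}\left(\frac{n_1}{3}\right)\zeta_1^{n_1/12}q^{n_1^2/24}$, while $\theta_3(z_2/2) = \sum_{n_2 \ \mathrm{even}}\zeta_2^{n_2/4}q^{n_2^2/8}$ and $\theta_4(z_2/2) = \sum_{n_2 \ \mathrm{even}}(-1)^{n_2/2}\zeta_2^{n_2/4}q^{n_2^2/8}$. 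Multiplying and adding, the coefficient of $\zeta_1^{n_1/12}\zeta_2^{n_2/4}q^{(n_1^2+3n_2^2)/24}$ becomes $\tfrac12\left(\frac{n_1}{3}\right)\big((-1)^{(n_1+n_2)/2}-1\big)$, which vanishes when $n_1 \equiv n_2 \pmod 4$ and equals $-\left(\frac{n_1}{3}\right) = \left(\frac{2n_1}{3}\right)$ otherwise. This matches the first sum, completing the even identity.

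For the odd identity, the equality of the middle Kronecker-symbol sum with $\tfrac12\big[\theta_2^{*}(z_1/6)\theta_2(z_2/2) - \sqrt{-3}\,\theta^{*}(z_1/6)\theta(z_2/2)\big]$ is immediate term by term: Propositions \ref{BB} and \ref{BB2} show that $\theta_2^{*}(z_1/6)\theta_2(z_2/2)$ contributes $\left(\frac{-12}{n_1}\right)\left(\frac{4}{n_2}\right)$ and $\theta^{*}(z_1/6)\theta(z_2/2)$ contributes $\left(\frac{12}{n_1}\right)\left(\frac{-4}{n_2}\right)$ on the same monomials. The main obstacle will be identifying the two \emph{phase} sums on the left with this middle sum: on the odd--odd lattice one must show that the root-of-unity coefficient $\pm e\!\left(\pm\tfrac{n_1-3n_2}{12}\right)$ selected by the sign $\left(\frac{12}{n_1}\right)\left(\frac{-4}{n_2}\right)$ equals $\tfrac12\left(\frac{n_1}{3}\right)\big[1 - \sqrt{-3}\,\left(\frac{-1}{n_1}\right)\left(\frac{-1}{n_2}\right)\big]$, which is the coefficient of the middle sum after applying the Kronecker identities above. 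Since $n_1 - 3n_2$ is even, the phase is a sixth root of unity, so this reduces to a finite verification over the residues $n_1 \bmod 12 \in \{1,5,7,11\}$ and $n_2 \bmod 4 \in \{1,3\}$; in each class one checks that the phase equals $\pm\rho$ or $\pm\overline{\rho}$ exactly as the formula with $\rho = \frac{1+\sqrt{-3}}{2}$ predicts. This is really the assertion that the two presentations of the character $\xi_{2\sqrt{-3}}$ agree, and I expect it to be the only step requiring genuine case analysis. Granting both identities, adding them yields the stated formula for $\phi_4(z;\tau)$.
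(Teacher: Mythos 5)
Your proof is correct. The paper in fact gives no proof of this proposition (it defers all cases except $r=26$ to Macdonald and to Gritsenko--Skoruppa--Zagier), so there is no argument to compare against line by line; but your strategy --- splitting into the even--even and odd--odd sublattices, matching $q$-expansion coefficients via $\left(\tfrac{-12}{n}\right)=\left(\tfrac{n}{3}\right)$ and $\left(\tfrac{12}{n}\right)=\left(\tfrac{-1}{n}\right)\left(\tfrac{n}{3}\right)$ for odd $n$, and reducing the phase identity to a finite check over $n_1 \bmod 12$ and $n_2 \bmod 4$ --- is precisely the style of ``manipulating the Jacobi symbols inside the sums'' that the paper itself employs in its proof of the $r=26$ case, so I would call this essentially the intended argument. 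I spot-checked the key computations: the coefficient $\tfrac12\left(\tfrac{n_1}{3}\right)\bigl((-1)^{(n_1+n_2)/2}-1\bigr)$ in the even part does reduce to $\left(\tfrac{2n_1}{3}\right)$ exactly on $n_1\not\equiv n_2\pmod 4$, and on the odd--odd part the quantity $s\,e\!\left(s\tfrac{n_1-3n_2}{12}\right)$ with $s=\left(\tfrac{12}{n_1}\right)\left(\tfrac{-4}{n_2}\right)$ agrees with $\tfrac12\left(\tfrac{n_1}{3}\right)\bigl[1+\sqrt{-3}\,(-1)^{(n_1+n_2)/2}\bigr]$ in each residue class, which is what the middle Kronecker-symbol sum requires. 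One caveat you should flag rather than silently absorb: the phase in your second sum, $e\!\left(\tfrac{-n_1+3n_2}{12}\right)$, matches the proposition as stated but does \emph{not} match either the displayed expansion of $\phi_4$ earlier in that subsection (which has $e\!\left(\tfrac{n_1+3n_2}{12}\right)$) or the definition of $\xi_{\m}$ (which has $e\!\left(\tfrac{-n_1-3n_2}{12}\right)$, with no extra factor of $s$); these three cannot all be consistent, so the final step ``adding the two identities yields $\phi_4$'' silently chooses one normalization. This is a typographical inconsistency in the source rather than a gap in your argument, but a complete write-up should state explicitly which expansion of $\phi_4$ is being summed.
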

\begin{rmk}
If one wants a simpler Jacobi form of this shape that lives above $\eta(\tau)^4$, then one can of course take 
\begin{equation*}
\theta^{*} \left( \frac{z_1}{6} \right) \theta \left( \frac{z_2}{2} \right) = \sum_{n_1, n_2 \in \Z} \left(\frac{12}{n_1} \right) \left(\frac{-4}{n_2} \right) \zeta_{1}^{\frac{n_1}{12}} \zeta_{2}^{\frac{n_2}{4}} q^{\frac{n_{1}^2 + 3 n_{2}^2}{24}}.
\end{equation*}
\end{rmk}
Define the coefficients $\tau_{4}(n)$ by
\begin{equation*}
\sum_{n \equiv 1 \pmod{6}} \tau_{4}(n) q^{\frac{n}{6}} = q^{\frac{1}{6}} \prod_{n \geq 1}(1-q^n)^4.
\end{equation*}
Then we have the following.
\begin{Cor}
Assume the notation above.  Then
\begin{align*}
\tau_{4}(n) &=\sum_{\substack{n_1 \geq 0 \\ n_{1}^2 + 3 n_{2}^2 =n}} \left(\frac{n_1}{3} \right) \left(\frac{4}{n_{1}^2 + n_{2}^2} \right) n_1 \\
&= -\frac{1}{6} \sum_{\substack{ n_1 \geq 0  \\ n_{1}^2 +3 n_{2}^2 =4n}}  \left( \frac{-12}{n_1} \right) \left( \frac{4}{n_2} \right)n_1 - 3 \left( \frac{12}{n_1} \right) \left( \frac{-4}{n_2} \right)n_2.
\end{align*}
\end{Cor}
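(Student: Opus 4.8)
The two identities are the two incarnations of the first Taylor coefficient of the weight-$2$ Jacobi form $\phi_{4}(z;\tau)$ at $z=0$. The plan is to apply $D_{z}^{r/2-1}=D_{z}$ (here $k=2$) to the identities recorded in the preceding Proposition, set $z=0$, and compare Fourier coefficients against the defining product $q^{1/6}\prod_{n\geq1}(1-q^{n})^{4}=\sum_{n\equiv1\,(6)}\tau_{4}(n)q^{n/6}$. By Lemma~\ref{S} (with $k=2$, so that $\theta_{\xi_{1}}$ is, up to a nonzero constant, $\eta(\tau)^{4}$) the quantity $D_{z}\phi_{4}(z;\tau)|_{z=0}$ is a nonzero multiple of $\eta(\tau)^{4}$. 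The two displayed formulas then arise from differentiating the two pieces into which the Proposition splits $\phi_{4}$: the rescaled piece with $q$-exponent $(n_{1}^{2}+3n_{2}^{2})/6$ and the original-scale piece with $q$-exponent $(n_{1}^{2}+3n_{2}^{2})/24$, each of which separately specializes to a rational multiple of $\eta(\tau)^{4}$.

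First I would record the action of the differentiation operator. Under the CM identification $z=\tfrac{z_{1}+\sqrt{-3}\,z_{2}}{2}$ the chain rule gives $D_{z}=\tfrac{1}{2\pi i}\big(\partial_{z_{1}}+\tfrac{1}{\sqrt{-3}}\partial_{z_{2}}\big)$, so acting on a monomial $\zeta_{1}^{n_{1}/c}\zeta_{2}^{n_{2}/d}$ it brings down a factor proportional to the degree-one spherical polynomial $\overline{\alpha}=\tfrac{n_{1}-n_{2}\sqrt{-3}}{2}$, exactly the $\overline{\alpha}^{\,k-1}$ anticipated in the discussion around \eqref{spherical}. Setting $z=0$ then sends every $\zeta_{i}\to1$ and leaves a lattice sum over $(n_{1},n_{2})\in\Z^{2}$ weighted by this linear factor together with the relevant quadratic character.

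The remaining work is symmetrization. Summing over $n_{2}\mapsto-n_{2}$, under which the quadratic form and the characters $(\tfrac{4}{\cdot}),(\tfrac{-4}{\cdot})$ have definite parity while the $\sqrt{-3}$-part of $\overline{\alpha}$ is odd, cancels the imaginary contribution and produces a real-valued sum; a further fold $n_{1}\mapsto-n_{1}$ (using $(\tfrac{-1}{3})=-1$, resp. $(\tfrac{-12}{-1})=-1$, so that the weighted character is even) restricts to $n_{1}\geq0$ at the cost of a factor $2$. For the rescaled piece this yields the first formula, with condition $n_{1}^{2}+3n_{2}^{2}=n$ and weight $(\tfrac{n_{1}}{3})(\tfrac{4}{n_{1}^{2}+n_{2}^{2}})n_{1}$. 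For the original-scale piece, expanding $\overline{\alpha}\cdot(A-\sqrt{-3}\,B)$ with $A=(\tfrac{-12}{n_{1}})(\tfrac{4}{n_{2}})$ and $B=(\tfrac{12}{n_{1}})(\tfrac{-4}{n_{2}})$ and using $(\sqrt{-3})^{2}=-3$ produces the real part $An_{1}-3Bn_{2}$, which is the two-term combination in the second formula with condition $n_{1}^{2}+3n_{2}^{2}=4n$.

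Finally I would pin down the normalizing constants by matching a single coefficient (say $n=1$); I expect to find $D_{z}(\text{rescaled piece})|_{z=0}=\tfrac13\eta^{4}$ and $D_{z}(\text{original piece})|_{z=0}=-\tfrac12\eta^{4}$, consistently summing to $D_{z}\phi_{4}|_{z=0}=-\tfrac16\eta^{4}$, which fixes the prefactor $-\tfrac16$ in the second formula and the trivial normalization of the first. I expect the only genuine obstacle to be bookkeeping: tracking the rational constants coming from the distinct $q$-scalings ($/6$ versus $/24$), the factor $3$ and relative sign forced by $(\sqrt{-3})^{2}=-3$, and the symmetrization factors, so that both formulas carry compatible normalizations. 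No analytic difficulty arises, since modularity and termwise differentiation are already guaranteed by Theorem~\ref{Main} and Lemma~\ref{S}.
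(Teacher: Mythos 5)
Your plan is the paper's own proof: the text gives nothing for these corollaries beyond the prescription of applying $D_{z}^{r/2-1}[\,\cdot\,]_{z=0}$, here a single $D_{z}$ with $z=\frac{z_1+\sqrt{-3}\,z_2}{2}$, to the two pieces of the preceding Proposition, and your chain rule, the cancellation of the $\sqrt{-3}$-component under $n_2\mapsto-n_2$, the fold to $n_1\geq0$, and the expansion producing the combination $\left(\frac{-12}{n_1}\right)\left(\frac{4}{n_2}\right)n_1-3\left(\frac{12}{n_1}\right)\left(\frac{-4}{n_2}\right)n_2$ are all sound. The substantive issue is the final normalization, and the coefficient-matching you propose is exactly what settles it --- but not with the values you anticipate. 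The rescaled piece does yield $\frac13\sum_{n_1\geq0}\left(\frac{n_1}{3}\right)\left(\frac{4}{n_1^2+n_2^2}\right)n_1\,q^{(n_1^2+3n_2^2)/6}=\frac13\eta(\tau)^4$, which confirms the first displayed formula with trivial normalization. For the original-scale piece the accumulated prefactor is $\frac12\cdot\frac1{12}\cdot2=\frac1{12}$, and at $n=1$ the solutions of $n_1^2+3n_2^2=4$ with $n_1\geq0$ are $(2,0)$, contributing $0$, and $(1,\pm1)$, each contributing $-2$, so the inner sum equals $-4$ while $\tau_4(1)=1$; the same check at $n=7$ and $n=13$ gives sums $16$ and $-8$ against $\tau_4(7)=-4$, $\tau_4(13)=2$. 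Hence the second formula holds with constant $-\frac14$, not the printed $-\frac16$, and your expected splitting $\frac13\eta^4+\left(-\frac12\right)\eta^4=-\frac16\eta^4$ is not what the computation produces --- the printed constant appears to be an error in the statement, which your verification step detects and corrects rather than reproduces. Finally, your unjustified assertion that each piece is separately a rational multiple of $\eta^4$ is true and worth a sentence: the even/odd dichotomy on $(n_1,n_2)$ splits each unit orbit $\{\rho^{j}\alpha\}_{j=0}^{5}$ of a generator into the two multiples lying in $\Z[\sqrt{-3}]$ and the remaining four, and since $\xi_{\m}(\rho)=\rho^{-1}$ and $\xi_{\m}(-1)=-1$ each sub-orbit sum of $\xi_{\m}(\cdot)$ times the degree-one spherical polynomial is again proportional to the contribution of the underlying ideal, so each piece is a fixed rational multiple of the full CM form.
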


\subsubsection{$r=6$}
Let $K = \Q(i)$ and $\mathcal{O}_{K}$ its ring of integers.  We consider a Hecke character with conductor $\m = 2$.  If $\alpha = n_1 + i n_2$, then $\alpha$ is coprime to $\m$ as long as $n_1 \not\equiv n_2 \pmod{2}$ and $\alpha \equiv^* 1 \pmod{2}$ when $n_1$ is odd and $n_2$ is even.  We therefore have
\begin{equation*}
\xi_{\m}(\alpha) = \begin{cases} 0 & \text{if} \ n_1 \equiv n_2 \pmod{2} \\ (-1)^{n_{2}} & \text{if} \ n_1 \not\equiv n_2 \pmod{2}.
\end{cases}
\end{equation*}.
Thus
\begin{equation*}
\phi_{6}(z; \tau) = \sum_{\alpha \in \mathcal{O}_{K}} \xi_{2}(\alpha) e^{2 \pi i B^{*}(\alpha, z)} q^{Q^{*}(\alpha)} = \sum_{n_1, n_2 \in \Z} (-1)^{n_2} \left( \frac{4}{n_{1}^2 + n_{2}^2} \right) e^{2 \pi i \frac{n_1 z_1 + n_2 z_2}{2}} q^{\frac{n_{1}^2 + n_{2}^2}{4}}.
\end{equation*}
\begin{Prop}
Assume the notation above.  Then
\begin{align*}
\phi_{6}(z;\tau) &= \sum_{n_1, n_2 \in \Z} (-1)^{n_{2}} \left(\frac{4}{n_{1}^2 + n_{2}^2} \right) \zeta_{1}^{\frac{n_{1}}{2}} \zeta_{2}^{\frac{n_2}{2}} q^{\frac{n_{1}^2 + n_{2}^2}{4}} \\
&= \theta \left(\frac{z_1 + z_2 }{2} \right) \theta \left( \frac{z_1 - z_2}{2} \right).
\end{align*}
\end{Prop}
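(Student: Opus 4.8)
The first equality in the statement is exactly the sum-side representation of $\phi_6(z;\tau)$ derived immediately above the proposition, so the only thing left to prove is the product formula on the second line. My plan is to compute the product $\theta\left(\frac{z_1+z_2}{2};\tau\right)\theta\left(\frac{z_1-z_2}{2};\tau\right)$ directly from the series expansion $\theta(z;\tau)=\sum_{n\in\Z}\left(\frac{-4}{n}\right)\zeta^{n/2}q^{n^2/8}$ and to reindex so as to recover the double sum. This is a standard ``lattice rotation'' manipulation and does not require invoking the structure theory of Jacobi forms.

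First I would multiply the two series to obtain a double sum over $(m,n)\in\Z^2$,
\[
\theta\left(\frac{z_1+z_2}{2};\tau\right)\theta\left(\frac{z_1-z_2}{2};\tau\right)=\sum_{m,n\in\Z}\left(\frac{-4}{m}\right)\left(\frac{-4}{n}\right)e^{2\pi i\frac{(m+n)z_1+(m-n)z_2}{4}}q^{\frac{m^2+n^2}{8}}.
\]
The decisive step is the change of variables $m=n_1+n_2$, $n=n_1-n_2$. Since $\left(\frac{-4}{\cdot}\right)$ vanishes on even integers, only pairs with $m,n$ both odd contribute, and $(m,n)\mapsto(n_1,n_2)=\left(\frac{m+n}{2},\frac{m-n}{2}\right)$ is a bijection from such pairs onto the set of $(n_1,n_2)\in\Z^2$ with $n_1\not\equiv n_2\pmod 2$. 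Under this substitution the exponent of $e$ becomes $\frac{n_1z_1+n_2z_2}{2}$ and the exponent of $q$ becomes $\frac{n_1^2+n_2^2}{4}$, so the monomials match $\zeta_1^{n_1/2}\zeta_2^{n_2/2}q^{(n_1^2+n_2^2)/4}$ exactly.

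It then remains to verify the coefficient identity
\[
\left(\frac{-4}{n_1+n_2}\right)\left(\frac{-4}{n_1-n_2}\right)=(-1)^{n_2}\left(\frac{4}{n_1^2+n_2^2}\right)
\]
on the support $n_1\not\equiv n_2\pmod 2$, which is where the only genuine (though still elementary) calculation lies. Since $\left(\frac{-4}{\cdot}\right)$ is the primitive character modulo $4$ and hence completely multiplicative in its lower entry, the left side equals $\left(\frac{-4}{n_1^2-n_2^2}\right)$, while $\left(\frac{4}{n_1^2+n_2^2}\right)=1$ precisely because $n_1^2+n_2^2$ is odd on this support. I would finish by splitting into the two parity cases and reading the character off modulo $4$: if $n_1$ is odd and $n_2$ even then $n_1^2-n_2^2\equiv 1\pmod 4$, giving $+1=(-1)^{n_2}$; if $n_1$ is even and $n_2$ odd then $n_1^2-n_2^2\equiv 3\pmod 4$, giving $-1=(-1)^{n_2}$. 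This establishes the product formula term by term.

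I expect the main obstacle to be purely bookkeeping: confirming that $(m,n)\mapsto(n_1,n_2)$ is a genuine bijection onto the correct index set and that the two support conditions (oddness of $m,n$ versus $n_1\not\equiv n_2$) align. Once that is in place the identity holds monomial by monomial, and no separate check of the Jacobi transformation laws is needed, since $\phi_6$ is already known to be a Jacobi form.
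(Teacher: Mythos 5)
Your proof is correct. Note that the paper does not actually prove this proposition---it explicitly omits the proofs for all $r\neq 26$ and refers to \cite{M,GSZ}---so your direct verification (multiplying the two theta series, rotating the lattice via $m=n_1+n_2$, $n=n_1-n_2$, and checking the character identity $\left(\frac{-4}{n_1^2-n_2^2}\right)=(-1)^{n_2}$ on the support $n_1\not\equiv n_2\pmod 2$) is exactly the standard argument and correctly fills in the omitted details, including the bijection between pairs of odd $(m,n)$ and pairs $(n_1,n_2)$ of opposite parity.
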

Define the coefficients $\tau_{6}(n)$ by
\begin{equation*}
\sum_{n \equiv 1 \pmod{4}} \tau_{6}(n) q^{\frac{n}{4}} = q^{\frac{1}{4}} \prod_{n \geq 1} (1-q^n)^6.
\end{equation*}
We then have the following.
\begin{Cor}
Assume the notation above.  Then
\begin{align*}
\tau_{6}(n) &= \frac{1}{4} \sum_{\substack{n_1, n_2 \in \Z \\ n_{1}^2 + n_{2}^2 =n}} (-1)^{n_2} \left( \frac{4}{n_{1}^2 + n_{2}^2} \right) (n_{1} -in_2)^2 \\
&= \sum_{\substack{n_1 \geq 0 \\ n_2 >0 \\ n_{1}^2 + n_{2}^2 =n}} (-1)^{n_2} \left( \frac{4}{n_{1}^2 + n_{2}^2}\right) (n_1 +n_2)(n_1 -n_2).
\end{align*}
\end{Cor}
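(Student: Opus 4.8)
The plan is to extract the Corollary from the two representations of $\phi_{6}(z;\tau)$ recorded in the preceding Proposition by applying $D_{z}^{2}$ (here $r/2-1=2$) and specializing to $z=0$, taking $z=z_{1}+iz_{2}$ since the relevant field is $\Q(i)$. By Lemma \ref{S} this operation sends $\phi_{6}$ to a nonzero multiple of the CM form $\eta(\tau)^{6}$, so it suffices to compute both sides explicitly and compare coefficients of $q^{n/4}$ against $\eta(\tau)^{6}=\sum_{n\equiv 1\ (\mathrm{mod}\ 4)}\tau_{6}(n)q^{n/4}$. First I would differentiate the Fourier-series side. Writing $z_{1}=(z+\overline z)/2$ and $z_{2}=(z-\overline z)/(2i)$, the holomorphic operator $D_{z}=\frac{1}{2\pi i}\frac{\partial}{\partial z}$ acts on the character $\zeta_{1}^{n_{1}/2}\zeta_{2}^{n_{2}/2}=e\big(\tfrac{n_{1}z_{1}+n_{2}z_{2}}{2}\big)$ by multiplication by $\tfrac{n_{1}-in_{2}}{4}$; hence $D_{z}^{2}$ pulls out $\tfrac{(n_{1}-in_{2})^{2}}{16}$, and at $z=0$ one obtains $D_{z}^{2}[\phi_{6}]_{z=0}=\tfrac{1}{16}\sum_{n_{1},n_{2}}(-1)^{n_{2}}\big(\tfrac{4}{n_{1}^{2}+n_{2}^{2}}\big)(n_{1}-in_{2})^{2}q^{(n_{1}^{2}+n_{2}^{2})/4}$.

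The constant is then pinned down on the product side. Since $\theta(0;\tau)=0$, applying $D_{z}^{2}$ to $\theta\big(\tfrac{z_{1}+z_{2}}{2}\big)\theta\big(\tfrac{z_{1}-z_{2}}{2}\big)$ and setting $z=0$ annihilates every term except the cross term $2\,D_{z}\theta(u)|_{0}\,D_{z}\theta(v)|_{0}$, where $u=\tfrac{z_{1}+z_{2}}{2}$ and $v=\tfrac{z_{1}-z_{2}}{2}$. The chain rule gives $\partial u/\partial z=\tfrac{1-i}{4}$ and $\partial v/\partial z=\tfrac{1+i}{4}$, so together with the classical identity $D_{w}\theta(w;\tau)|_{w=0}=\eta(\tau)^{3}$ this yields $D_{z}^{2}[\phi_{6}]_{z=0}=2\cdot\tfrac{1-i}{4}\eta(\tau)^{3}\cdot\tfrac{1+i}{4}\eta(\tau)^{3}=\tfrac14\eta(\tau)^{6}$, using $(1-i)(1+i)=2$. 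Comparing this with the Fourier-side expression and reading off the coefficient of $q^{n/4}$ gives the first displayed identity $\tau_{6}(n)=\tfrac14\sum_{n_{1}^{2}+n_{2}^{2}=n}(-1)^{n_{2}}\big(\tfrac{4}{n}\big)(n_{1}-in_{2})^{2}$.

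For the second identity I would symmetrize the lattice sum. Expanding $(n_{1}-in_{2})^{2}=n_{1}^{2}-n_{2}^{2}-2in_{1}n_{2}$, the imaginary part cancels under $n_{1}\mapsto-n_{1}$, which fixes the weight $(-1)^{n_{2}}\big(\tfrac{4}{n}\big)$, leaving $\tfrac14\sum(-1)^{n_{2}}\big(\tfrac{4}{n}\big)(n_{1}^{2}-n_{2}^{2})$. The summand is now even in each of $n_{1}$ and $n_{2}$, so I would fold the sum over $\Z^{2}$ onto the fundamental domain $n_{1}\ge 0,\ n_{2}>0$, dividing by the order $4$ of the sign group. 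Since $\big(\tfrac{4}{n}\big)$ forces $n$ odd, the axis terms ($n_{1}=0$ or $n_{2}=0$) occur only when $n=m^{2}$ with $m$ odd, and there the four axis points $(0,\pm m)$ and $(\pm m,0)$ each carry summand value $m^{2}$, so their averaged contribution $\tfrac14\cdot 4m^{2}=m^{2}$ exactly equals the single retained term $(0,m)$. Writing $n_{1}^{2}-n_{2}^{2}=(n_{1}+n_{2})(n_{1}-n_{2})$ then produces the claimed form.

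I expect the main obstacle to be bookkeeping rather than conceptual: getting the Wirtinger chain-rule factors exactly right so the constant emerges as $\tfrac14$ (and not $\tfrac12$ or $\tfrac18$), and treating the axis terms carefully in the folding step so the fundamental-domain choice $n_{1}\ge 0,\ n_{2}>0$ is consistent. Both reduce to the single elementary computation $D_{z}\big(\zeta_{1}^{n_{1}/2}\zeta_{2}^{n_{2}/2}\big)=\tfrac{n_{1}-in_{2}}{4}\zeta_{1}^{n_{1}/2}\zeta_{2}^{n_{2}/2}$, which I would verify first; a sanity check at $n=1$ (where both sides give $\tau_{6}(1)=1$) confirms the normalization.
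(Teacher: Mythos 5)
Your proposal is correct and follows exactly the route the paper prescribes: the paper omits the proof but states that each corollary is obtained by applying $D_{z}^{\frac{r}{2}-1}$ to $\phi_{r}$ at $z=0$ with $z=z_{1}+iz_{2}$, which for $r=6$ is your $D_{z}^{2}$ computation, and your constant $\tfrac14$, the symmetrization killing the $-2in_{1}n_{2}$ term, and the axis-term bookkeeping in the fold to $n_{1}\ge 0$, $n_{2}>0$ all check out. No gaps.
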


\subsubsection{$r=8$}
Let $K=\Q(\sqrt{-3})$, let $\mathcal{O}_{K}$ be its ring of integers, and cosider a Hecke character with conductor $\m = \sqrt{-3}$.  Therefore, $\alpha = \frac{n_1 + \sqrt{-3}n_2}{2}$ with $n_1 \equiv n_2 \pmod{2}$ is coprime to $\m$ when $n_1 \not\equiv 0 \pmod{3}$ and $\alpha \equiv^* 1 \pmod{\sqrt{-3}}$ when $n_1 \equiv 2 \pmod{3}$.  We let
\begin{equation*}
\xi_{\m}(\alpha) = \left(\frac{2n_1}{3} \right).
\end{equation*}
Thus
\begin{equation*}
\phi_{8}(z; \tau) = \sum_{\alpha \in \mathcal{O}_{K}} \xi_{\sqrt{-3}}(\alpha) e^{2 \pi i B^{*}(\alpha, z)} q^{Q^{*}(\alpha)} = \sum_{\substack{ n_{1}, n_2 \in \Z \\ n_2 \equiv n_2 \pmod{2}}} \left(\frac{2n_1}{3} \right) e^{2 \pi i \frac{n_1 z_1 + 3 n_2 z_2}{6}} q^{\frac{n_{1}^2 + 3 n_{2}^2}{12}}.
\end{equation*}

\begin{Prop} \label{8}
Assume the notation above.  Then
\begin{align*}
\phi_{8}(z; \tau) &=\sum_{\substack{ n_{1}, n_2 \in \Z \\ n_1 \equiv n_2 \pmod{2}}} \left(\frac{2n_1}{3} \right) \zeta_{1}^{\frac{n_{1}}{6}} \zeta_{2}^{\frac{n_2}{2}} q^{\frac{n_{1}^2 + 3 n_{2}^2}{12}} \\
&=  \frac{\theta\left( \frac{z_1 + 3z_2}{6} \right)\theta\left( \frac{z_1 -3z_2}{6} \right) \theta \left(\frac{z_1}{3} \right)}{\eta(\tau)}.
\end{align*}
\end{Prop}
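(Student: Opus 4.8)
The plan is to prove the identity by recognizing the product side as the Weyl--Kac denominator of the affine Lie algebra $\widehat{\mathfrak{sl}_3}$ and collapsing it to the theta series on the left. The first thing I would record is the structural coincidence
\begin{equation*}
\frac{z_1+3z_2}{6} + \frac{z_1-3z_2}{6} = \frac{z_1}{3},
\end{equation*}
so that the three arguments $\ell_1,\ell_2,\ell_3=\ell_1+\ell_2$ of the theta functions are exactly the three positive roots $\alpha_1,\alpha_2,\alpha_1+\alpha_2$ of $A_2$, with the lattice $\mathcal{O}_K=\Z[\rho]$ playing the role of the $A_2$ root lattice (the Eisenstein integers). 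Thus the right-hand side $\theta(\ell_1)\theta(\ell_2)\theta(\ell_3)/\eta(\tau)$ is, up to normalization, the product over positive roots in the affine $A_2$ denominator, and its specialization $D_z^3[\,\cdot\,]_{z=0}$ is precisely Macdonald's $A_2$ identity for $\eta(\tau)^8$, which is the content recorded in \cite{M,GSZ}.

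Concretely, I would expand each $\theta(\ell_i;\tau)$ by the Jacobi triple product and the factor $\eta(\tau)^{-1}$ by the Euler product, and then apply the Weyl--Kac denominator identity for $A_2^{(1)}$ to rewrite the resulting infinite product as the Weyl numerator $\sum_{w\in W}\mathrm{sgn}(w)\,e^{w(\rho)}$, a sum over the affine Weyl group $W = Q^{\vee}\rtimes W_0$. Reorganizing this sum should recover the left-hand series: the lattice translations by $Q^{\vee}\cong\mathcal{O}_K$ produce the theta series in $(n_1,n_2)$, while the finite reflections $W_0$ together with the $\rho$-translate produce the coefficient. The main obstacle is exactly this last bookkeeping step, namely matching $\mathrm{sgn}(w)$ and the $\rho$-shift to the precise quadratic residue symbol $\left(\frac{2n_1}{3}\right)$ and the support condition $n_1\equiv n_2\pmod 2$ on the theta side; essentially all the arithmetic content of the identity sits here, and the preceding product manipulations are formal.

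Alternatively --- and this is the route I would prefer given the machinery already in place --- I would argue by uniqueness of Jacobi forms. By Theorem \ref{P} (the $k=1$ case of Theorem \ref{Main}) the left-hand side is a holomorphic weight-$1$ Jacobi form on $\Gamma_{0}(9)$ (here $|D|N(\m)=3\cdot3$) of lattice index $(\Z^2,Q^{\ast})$ with character $\chi_D\xi_\m$. By the theory of theta blocks \cite{GSZ} the right-hand side is a holomorphic Jacobi form of the same weight, and a short computation of the index form (both sides give $\mathrm{diag}(\tfrac1{12},\tfrac14)$) together with the character shows the two lie in the same space. It then suffices to show their difference, a weight-$1$ Jacobi cusp form for this index, character, and level, vanishes; via the theta decomposition this reduces to comparing finitely many Fourier coefficients of the associated vector-valued modular forms, a finite check controlled by a Sturm-type bound. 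Here the obstacle is purely organizational --- pinning down the index, level, and character so that the finite comparison is valid --- and this route bypasses the combinatorics of the denominator identity entirely.
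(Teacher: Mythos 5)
For this proposition the paper deliberately gives no proof: it defers to Macdonald \cite{M} and Gritsenko--Skoruppa--Zagier \cite{GSZ} (the product here is the ``theta quark'' of \cite{S2, GSZ}), and the remark following Proposition \ref{8} records exactly your structural observation, identifying $\frac{z_1+3z_2}{6}$, $\frac{z_1-3z_2}{6}$, and their sum $\frac{z_1}{3}$ with the positive roots of $A_2$. Your first route --- triple product plus the Weyl--Kac denominator identity for $A_2^{(1)}$ --- is therefore essentially the route of the sources the paper cites, not a new one; your second route, by contrast, is a genuinely different and self-contained alternative that the paper does not pursue: both sides are weight-$1$ Jacobi forms of lattice index $\mathrm{diag}(\tfrac1{12},\tfrac14)$ (your index computation checks out, since $\tfrac12\bigl(\tfrac{u_1+3u_2}{6}\bigr)^2+\tfrac12\bigl(\tfrac{u_1-3u_2}{6}\bigr)^2+\tfrac12\bigl(\tfrac{u_1}{3}\bigr)^2=\tfrac{u_1^2}{12}+\tfrac{u_2^2}{4}$), and a finite coefficient comparison would close the argument. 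What the root-system route buys is the conceptual explanation and the connection to the Macdonald identities; what the uniqueness route buys is independence from the denominator-identity combinatorics at the price of verifying levels, characters, and a Sturm-type bound for this specific lattice index. That said, be aware that as written neither route is yet a proof: in the first, the entire arithmetic content --- matching $\mathrm{sgn}(w)$ and the $\rho$-shift in the Weyl numerator to the symbol $\left(\frac{2n_1}{3}\right)$ and the support condition $n_1\equiv n_2\pmod 2$ --- is exactly the step you defer, and in the second the level of $\phi_8=\phi_\xi^{*}$ needs care (the starred normalization $Q^{*}=Q/N$ alters the behaviour under $\tau\mapsto\tau+1$ as in Theorem \ref{P}(1), so $\Gamma_0(9)$ should be justified rather than read off from Theorem \ref{Main}), and the finite check must actually be performed. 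Either route, carried to completion, would be a legitimate proof; the paper simply outsources this to \cite{M, GSZ}.
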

\begin{rmk}
This identity is equivalent to the MacDonald identity for the root system $A_2$.  We can identify the positive roots $r_1 = \frac{z_1 + 3z_2}{6}$, $r_2= \frac{z_1 - 3z_2}{6}$, and $r_1 + r_2 = \frac{z_1}{3}$ with the arguments in the theta functions.
\end{rmk}
Define the coefficients $\tau_{8}(n)$ by
\begin{equation*}
\sum_{n \equiv 1 \pmod{3}} \tau_{8}(n) q^{\frac{n}{3}} = q^{\frac{1}{3}}\prod_{n \geq 1}(1-q^{n})^8.
\end{equation*}
We then have the following corollary.
\begin{Cor}
Assume the notation above.  Then
\begin{align*}
\tau_{8}(n) &= \frac{1}{48}\sum_{\substack{n_1, n_2 \in \Z \\ n_1 \equiv n_2 \pmod{2} \\ n_{1}^2 + 3n_{2}^2 =4n}} \left(\frac{2n_1}{3} \right) (n_1 - \sqrt{-3}n_2)^3 \\
&= \frac{1}{48} \sum_{\substack{n_1 \geq 0 \\ n_1 \equiv n_2 \pmod{2} \\ n_{1}^2 + 3n_{2}^2 =4n}} \left(\frac{2n_1}{3} \right) 2n_1 (n_1 +3n_2)(n_1 -3n_2).
\end{align*}
\end{Cor}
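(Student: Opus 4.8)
The plan is to obtain the Corollary by specializing the Jacobi form $\phi_8(z;\tau)$ of Proposition \ref{8}, following the recipe recorded just before the $r=2$ case: apply $D_z^{r/2-1}=D_z^{3}$ and set $z=0$, where $z=\frac{z_1+\sqrt{-3}z_2}{2}$ since the form has CM by $\Q(\sqrt{-3})$. I would read off the first equality from the \emph{sum-side} representation of $\phi_8$, fix the overall constant from the \emph{product-side} representation, and then deduce the second equality by a reality/symmetry argument.

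First I would rewrite the additive form. Writing $\alpha=\frac{n_1+\sqrt{-3}n_2}{2}$ and treating $z_1,z_2$ as real so that $z_1=z+\overline z$ and $\sqrt{-3}z_2=z-\overline z$, a direct substitution shows
\begin{equation*}
\zeta_1^{n_1/6}\zeta_2^{n_2/2}=e^{2\pi i\frac{n_1z_1+3n_2z_2}{6}}=e^{2\pi i\frac{\overline\alpha\, z+\alpha\,\overline z}{3}}.
\end{equation*}
Since $D_z=\frac{1}{2\pi i}\frac{\partial}{\partial z}$ acts holomorphically in $z$, each application pulls down a factor $\frac{\overline\alpha}{3}$, so $D_z^3$ contributes $\frac{\overline\alpha^{\,3}}{27}=\frac{(n_1-\sqrt{-3}n_2)^3}{216}$; setting $z=0$ kills the exponential while $q^{(n_1^2+3n_2^2)/12}$ survives. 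This already yields
\begin{equation*}
D_z^3\left[\phi_8(z;\tau)\right]_{z=0}=\frac{1}{216}\sum_{n_1\equiv n_2\ (2)}\left(\frac{2n_1}{3}\right)(n_1-\sqrt{-3}n_2)^3\,q^{\frac{n_1^2+3n_2^2}{12}}.
\end{equation*}

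To pin the constant I would differentiate the product side $\phi_8=\theta(\frac{z_1+3z_2}{6})\theta(\frac{z_1-3z_2}{6})\theta(\frac{z_1}{3})/\eta(\tau)$. By Lemma \ref{S} the specialization is a constant multiple of $\eta(\tau)^8$, and to identify that constant I would use $D_z\theta(z;\tau)|_{z=0}=\eta(\tau)^3$ from the remark after Lemma \ref{S}, i.e.\ $\theta(z;\tau)=2\pi i\,\eta(\tau)^3 z+O(z^2)$. Each theta factor vanishes at $z=0$, so only the top derivative survives; expressing the three arguments as linear forms in $z$ with $z$-derivatives $\frac{1-\sqrt{-3}}{6}$, $\frac{1+\sqrt{-3}}{6}$, and $\frac13$, the triple product rule gives
\begin{equation*}
D_z^3\left[\phi_8\right]_{z=0}=\frac{1}{\eta(\tau)}\cdot\frac{3!}{(2\pi i)^3}\left(2\pi i\,\eta(\tau)^3\right)^3\cdot\frac{1-\sqrt{-3}}{6}\cdot\frac{1+\sqrt{-3}}{6}\cdot\frac13=\frac29\,\eta(\tau)^8.
\end{equation*}
Comparing the coefficient of $q^{n/3}$ (so that $n_1^2+3n_2^2=4n$) in the two expressions for $D_z^3[\phi_8]_{z=0}$ gives $\frac29\tau_8(n)=\frac{1}{216}\sum(\cdots)$, hence the first claimed formula with constant $\frac{9}{432}=\frac{1}{48}$.

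Finally I would pass to the second equality by exploiting that $\tau_8(n)$ is real. Expanding $(n_1-\sqrt{-3}n_2)^3=n_1(n_1^2-9n_2^2)-\sqrt{-3}\,\cdot 3n_2(n_1^2-n_2^2)$ splits the summand into a part even in $n_2$ and a part odd in $n_2$; since both $\left(\frac{2n_1}{3}\right)$ and the constraint $n_1^2+3n_2^2=4n$ are even in $n_2$, the odd (imaginary) part cancels under $n_2\mapsto-n_2$, leaving $\sum\left(\frac{2n_1}{3}\right)n_1(n_1+3n_2)(n_1-3n_2)$. Because $\left(\frac{-1}{3}\right)=-1$, this summand is invariant under $n_1\mapsto-n_1$ (both $\left(\frac{2n_1}{3}\right)$ and $n_1(n_1^2-9n_2^2)$ flip sign), and the $n_1=0$ term vanishes, so folding the sum onto $n_1\geq0$ produces the extra factor $2n_1$ and the restriction in the second line. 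I expect the only genuine obstacle to be the careful bookkeeping of normalizations — tracking the $\frac{1}{(2\pi i)^3}$ from $D_z^3$, the $\left(2\pi i\,\eta^3\right)^3$ from the three theta derivatives, and the Jacobian factors $u_i'(0)$ — so that the sum-side and product-side computations agree on the exact constant $\frac{1}{48}$.
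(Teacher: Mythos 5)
Your proposal is correct and follows exactly the route the paper prescribes for all of these corollaries: rewrite the exponent of $\phi_8$ as $e^{2\pi i(\overline{\alpha}z+\alpha\overline{z})/3}$ with $z=\frac{z_1+\sqrt{-3}z_2}{2}$, apply $D_z^{3}$, and set $z=0$; your constant $\frac{2}{9}\eta(\tau)^8$ from the product side and the resulting $\frac{9}{432}=\frac{1}{48}$ check out, as do the $n_2\mapsto -n_2$ and $n_1\mapsto -n_1$ symmetrizations giving the second equality. The paper omits these details, so your write-up simply supplies the bookkeeping its stated recipe leaves implicit.
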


\subsubsection{$r=10$}
Let $K=\Q(i)$ and let $\mathcal{O}_{K}$ be its ring of integers.  We consider the difference of two characters each with conductor $\m=6$.  Therefore $\alpha = n_1 + i n_2$ is coprime to $\m$ when $n_1 \not\equiv n_2 \pmod{2}$ and $n_1$ and $n_2$ are not both $0 \pmod{3}$ and $\alpha \equiv^* 1 \pmod{6}$ when $(n_1, n_2) \equiv (1,0) \pmod{6}$.  We consider the characters $\xi_{6}^{\pm}(\alpha) = (-1)^{n_2} (\pm i)^b$ where $\alpha \equiv (1-i)^b \pmod{3}$.  Explicitly, when $\alpha$ is coprime to $\m$ we have
\begin{equation*}
b = \begin{cases} 0 & n_1 \not\equiv n_2 \equiv 0 \pmod{3} \\
1 & n_1 \equiv -n_2 \not\equiv 0 \pmod{3} \\
2 & n_2 \not\equiv n_1 \equiv 0 \pmod{3} \\
3 & n_1 \equiv n_2 \not\equiv 0 \pmod{3},
\end{cases}
\end{equation*}
which gives
\begin{equation*}
\xi_{6}^{+}(\alpha) = \begin{cases}
(-1)^{n_2} & \text{if} \ (n_1, n_2) \equiv (\pm1, 0) \pmod{3} \\
-(-1)^{n_2} & \text{if} \ (n_1, n_2) \equiv (0, \pm 1)  \pmod{3} \\
 i (-1)^{n_2} & \text{if} \ (n_1, n_2) \equiv (\pm 1, \mp 1) \pmod{3} \\
- i (-1)^{n_2} & \text{if} \ (n_1, n_2) \equiv (\pm 1, \pm 1) \pmod{3}
\end{cases}
\end{equation*}
and 
\begin{equation*}
\xi_{6}^{-}(\alpha) = \begin{cases}
(-1)^{n_2} & \text{if} \ (n_1, n_2) \equiv (\pm1, 0) \pmod{3} \\
-(-1)^{n_2} & \text{if} \ (n_1, n_2) \equiv (0, \pm 1)  \pmod{3} \\
-i (-1)^{n_2} & \text{if} \ (n_1, n_2) \equiv (\pm 1, \mp 1) \pmod{3} \\
i (-1)^{n_2} & \text{if} \ (n_1, n_2) \equiv (\pm 1, \pm 1) \pmod{3}
\end{cases}
\end{equation*}
for all $\alpha$ coprime to $\m$.  We define
\begin{equation*}
\phi_{10}(z; \tau) = \frac{i}{2} \left( \phi_{\xi^{+}}^{*}(z; \tau) - \phi_{\xi^{-}}^{*}(z; \tau) \right).
\end{equation*}
\begin{Prop}
Assume the notation above.  Then
\begin{align*}
\phi_{10}(z; \tau) &= \sum_{(n_1, n_2) \equiv (\pm 1, \pm1) \pmod{3}} (-1)^{n_2} \left( \frac{4}{n_1 +n_2} \right) \zeta_{1}^{\frac{n_1}{6}} \zeta_{2}^{\frac{n_2}{6}} q^{\frac{n_{1}^2 + n_{2}^2 }{12}} \\
&-\sum_{(n_1, n_2) \equiv (\pm 1, \mp1) \pmod{3}} (-1)^{n_2} \left( \frac{4}{n_1 +n_2} \right) \zeta_{1}^{\frac{n_1}{6}} \zeta_{2}^{\frac{n_2}{6}} q^{\frac{n_{1}^2 + n_{2}^2 }{12}} \\
&= \sum_{n_1, n_2 \in \Z} (-1)^{n_2} \left( \frac{4}{n_{1} + n_{2}} \right)  \left[ \substack{\zeta_{1}^{\frac{3n_1 +1}{6}} \zeta_{2}^{\frac{3n_2 +1}{6}} +\zeta_{1}^{\frac{-3n_1 -1}{6}} \zeta_{2}^{\frac{-3n_2 -1}{6}} \\ -\zeta_{1}^{\frac{3n_1 +1}{6}} \zeta_{2}^{\frac{-3n_2 -1}{6}} -\zeta_{1}^{\frac{-3n_1 -1}{6}} \zeta_{2}^{\frac{3n_2 +1}{6}}} \right] q^{\frac{3 \left[ \left(n_1 + \frac{1}{3} \right)^2 + \left(n_2 + \frac{1}{3} \right)^2 \right]}{4}} \\
&=\frac{\theta \left(\frac{z_1 + z_2}{6} \right) \theta \left(\frac{z_1 - z_2}{6} \right) \theta \left(\frac{z_1}{3} \right) \theta\left( \frac{z_2}{3} \right)}{\eta(\tau)^2}.
\end{align*}
\end{Prop}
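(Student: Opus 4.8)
The plan is to prove the three displayed equalities in sequence, obtaining the first two by direct manipulation of the defining series and the third by a uniqueness argument for Jacobi forms. For the top equality I start from the definition $\phi_{10} = \frac{i}{2}\left(\phi_{\xi^{+}}^{*} - \phi_{\xi^{-}}^{*}\right)$. Writing $\alpha = n_1 + i n_2 \in \mathcal{O}_{K}$ and inserting the bilinear and quadratic forms $B^{*}, Q^{*}$ for $K=\Q(i)$, the two series $\phi_{\xi^{\pm}}^{*}$ have identical monomials and differ only through the coefficient $\frac{i}{2}\left(\xi_{6}^{+}(\alpha) - \xi_{6}^{-}(\alpha)\right)$. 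The explicit description of $\xi_{6}^{\pm}$ recorded above shows this coefficient vanishes on the classes $(n_1,n_2)\equiv(\pm1,0)$ and $(0,\pm1)\pmod 3$, equals $(-1)^{n_2}$ on $(\pm1,\pm1)\pmod 3$, and equals $-(-1)^{n_2}$ on $(\pm1,\mp1)\pmod 3$; the surviving factor $\left(\frac{4}{n_1+n_2}\right)$ is precisely the indicator that $n_1+n_2$ is odd, i.e.\ the remaining coprimality condition to $\m=6$ not already imposed by the mod $3$ range of summation. This yields the first displayed identity.

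For the second equality I reindex each residue class by its standard representative via $n_j \mapsto 3n_j + 1$. On $(\pm1,\pm1)\pmod 3$ the two sign choices produce the monomials $\zeta_{1}^{(3n_1+1)/6}\zeta_{2}^{(3n_2+1)/6}$ and $\zeta_{1}^{-(3n_1+1)/6}\zeta_{2}^{-(3n_2+1)/6}$, while $(\pm1,\mp1)\pmod 3$ produces the two cross terms with the opposite overall sign; completing the square sends the exponent $\frac{n_1^2+n_2^2}{12}$ to $\frac{3\left[(n_1+\frac13)^2+(n_2+\frac13)^2\right]}{4}$. Tracking the factors $(-1)^{n_2}$ and $\left(\frac{4}{n_1+n_2}\right)$ through the shift then collects the four contributions into the single bracketed sum over $\Z^2$. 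This step is purely formal once the sign bookkeeping is done carefully.

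The content of the proposition is the final equality with the theta block $\eta(\tau)^{-2}\,\theta\!\left(\frac{z_1+z_2}{6}\right)\theta\!\left(\frac{z_1-z_2}{6}\right)\theta\!\left(\frac{z_1}{3}\right)\theta\!\left(\frac{z_2}{3}\right)$, and this is where I expect the real difficulty. My approach is a uniqueness argument. Since $\Q(i)$ has class number one, Theorem \ref{Main} together with its $k=1$ refinement Theorem \ref{P} shows that $\phi_{10}$ is a weight $1$ Jacobi form on $\Gamma_{0}(|D|N(\m)) = \Gamma_{0}(144)$ with character $\chi_{D}\xi_{\m}$ and lattice index given by the norm form of $\Q(i)$. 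The right-hand side is a theta block in the sense of \cite{GSZ} and hence also a Jacobi form; the work is to verify it has the same weight, index, level, and multiplier system, which I would do by matching the arguments $\frac{z_1\pm z_2}{6}, \frac{z_1}{3}, \frac{z_2}{3}$ against the lattice and computing the multiplier of the fourfold product from the transformation laws of $\theta$ in Proposition \ref{BB}. Once both sides lie in the same finite dimensional space of Jacobi forms, their difference is a form of the same type, so it is enough to compare finitely many Fourier coefficients—the lowest $q$-powers, beginning at $n_1^2+n_2^2=5$—to force the difference to vanish. The delicate point, on which I would spend the most effort, is this index-and-multiplier comparison for the product of four theta functions with these specific arguments; alternatively one can avoid it entirely by identifying the equality with the affine (Weyl--Kac) denominator identity for the root system $C_2$, which is classical \cite{M, GSZ}.
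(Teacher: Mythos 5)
Your proposal is correct, and on the one step that carries real content it lands where the paper does: the paper gives no proof of this proposition at all, stating only that the identities for $r\neq 26$ "can be found in many places \cite{M,GSZ}" and remarking afterwards that this one is the Macdonald identity for the root system $B_2$ (your $C_2$ is the same root system). Your computation of $\tfrac{i}{2}\bigl(\xi_6^{+}-\xi_6^{-}\bigr)$ is right --- it does vanish on $(\pm1,0)$ and $(0,\pm1)\pmod 3$ and equals $\pm(-1)^{n_2}$ on the remaining classes, with $\bigl(\tfrac{4}{n_1+n_2}\bigr)$ enforcing coprimality to $(1+i)$ --- and the reindexing $n_j\mapsto \pm(3n_j+1)$ is the routine bookkeeping you describe, so you are filling in exactly what the paper leaves implicit. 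For the final theta-block equality you offer two routes: citing the classical $B_2$ Macdonald identity, which is precisely what the paper does, or a self-contained uniqueness argument comparing both sides in a finite-dimensional space of weight-one Jacobi forms of fixed index, level, and multiplier. The second route is a genuine alternative; its cost is the index-and-multiplier verification for the product $\theta\bigl(\tfrac{z_1+z_2}{6}\bigr)\theta\bigl(\tfrac{z_1-z_2}{6}\bigr)\theta\bigl(\tfrac{z_1}{3}\bigr)\theta\bigl(\tfrac{z_2}{3}\bigr)\eta(\tau)^{-2}$, which you correctly flag as the delicate point (note also that weight-one spaces are where dimension arguments are most fragile, so the coefficient comparison would need the dimension bound made explicit); its benefit is independence from the Lie-theoretic literature.
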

\begin{rmk}
This identity is equivalent to the MacDonald identity for the root system $B_2$.  If we let $r_1 = \frac{z_1 + z_2}{6}$ and $r_2 = \frac{z_1 - z_2}{6}$, then the arguments of the theta functions correspond to the positive roots $r_1, r_2, r_1 + r_2, r_1 - r_2$.
\end{rmk}
Define the coefficients $\tau_{10}(n)$ by
\begin{equation*}
\sum_{n \equiv 5 \pmod{12}} \tau_{10}(n) q^{\frac{n}{12}} = q^{\frac{5}{12}} \prod_{n \geq 1}(1-q^n)^{10}.
\end{equation*}
\begin{Cor}
Assume the notation above.  Then
\begin{align*}
\tau_{10}(n) &= -\frac{i}{192} \sum_{\substack{n_1, n_2 \in \Z \\ \left(n_1 +\frac{1}{3} \right)^2 + \left( n_2 + \frac{1}{3} \right)^2 = \frac{4n}{3}}} (-1)^{n_2} \left( \frac{4}{n_{1}^2 + n_{2}^2} \right) \left[ \substack{\left( 3n_1 +1 -i(3n_2 +1) \right)^4 + \left( -3n_1 -1 +i(3n_2 +1) \right)^4 \\-\left(3n_1 +i(3n_2 +1) \right)^4 - \left( -3n_1 -1 -i(3n_2 +1) \right)^4} \right] \\
&=\frac{1}{12} \sum_{\substack{n_1, n_2 \in \Z \\ \left(n_1 +\frac{1}{3} \right)^2 + \left( n_2 + \frac{1}{3} \right)^2 = \frac{4n}{3}}} (-1)^{n_{2}} \left(\frac{4}{n_{1}^2 + n_{2}^2} \right) (3n_2 -3n_1)(3n_1 +1)(3n_2 +1)(3n_1 + 3n_2 +2).
\end{align*}
\end{Cor}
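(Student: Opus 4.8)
The plan is to obtain both displayed formulas by specializing the Jacobi form $\phi_{10}(z;\tau)$ through the operator $D_z^{4}$, exactly as Lemma \ref{S} prescribes. Since $\eta(\tau)^{10}$ has weight $5$, the underlying Hecke character has infinity-type exponent $k-1=4$, so $\tfrac{r}{2}-1=4$ and we differentiate four times. Because $K=\Q(i)$ has class number one with the diagonal norm form $n_1^2+n_2^2$, Lemma \ref{S} applies in the case $\rho=i$, i.e. along the direction $z=z_1+iz_2$, and guarantees
\[
D_z^{4}\left[\phi_{10}(z;\tau)\right]_{z=0}=C\,\eta(\tau)^{10}
\]
for a nonzero constant $C$, where the right-hand side is Serre's CM form \cite{Serre}. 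The constant $C$ is not needed to run the argument and will be fixed at the very end by a single Fourier coefficient; I expect $C=\tfrac{i}{108}$.

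First I would take the sum-side representation of $\phi_{10}$ from the Proposition above in its form indexed by $(3n_1+1,\,3n_2+1)$ with the four-term bracket. Writing each monomial as $\zeta_1^{a}\zeta_2^{b}=e(az_1+bz_2)$ and substituting $z_1=\tfrac{z+\overline{z}}{2}$, $z_2=\tfrac{z-\overline{z}}{2i}$, the coefficient of the holomorphic variable $z$ in the exponent is $\tfrac{a-ib}{2}$; for the unshifted series this is $\tfrac{\overline{\alpha}}{12}$ with $\alpha=n_1+in_2$, so the derivative extracts the degree-$4$ spherical polynomial $\overline{\alpha}^{4}$ of \eqref{spherical}, consistent with the framework. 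Applying $D_z^{4}=\bigl(\tfrac{1}{2\pi i}\tfrac{\partial}{\partial z}\bigr)^{4}$ in the Wirtinger sense (holding $\overline{z}$ fixed, justified term-by-term by absolute convergence) pulls out $\bigl(\tfrac{a-ib}{2}\bigr)^{4}$ from each summand, and setting $z=0$ collapses every exponential to $1$. For the four bracket monomials the pairs $(a,b)$ are $\pm\bigl(\tfrac{3n_1+1}{6},\tfrac{3n_2+1}{6}\bigr)$ in mixed-sign combinations; since fourth powers ignore the overall sign, the four contributions, up to the common factor $12^{-4}$ that I absorb into $C$, assemble into exactly the four-term bracket displayed in the statement. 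Multiplying by $(-1)^{n_2}\bigl(\tfrac{4}{n_1+n_2}\bigr)$, summing over $(n_1,n_2)$, and reading off the coefficient of $q^{n/12}$ in $C\,\eta(\tau)^{10}$ gives the first identity; extracting that coefficient restricts the sum to the lattice points of the prescribed norm, and comparing one low-order coefficient (e.g. the leading $\tau_{10}(5)=1$) fixes $C$.

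For the second formula I would simplify the complex bracket algebraically. With $u=3n_1+1$ and $v=3n_2+1$, the binomial expansion collapses to $(u-iv)^{4}-(u+iv)^{4}=-8i\,uv\,(u^{2}-v^{2})$, and factoring $u^{2}-v^{2}=(3n_1-3n_2)(3n_1+3n_2+2)$ converts the purely imaginary bracket into $i$ times a manifestly real product. The factor $i$ cancels the $i$ in $C$, the numerical constants consolidate into the rational $\tfrac{1}{12}$, and one rewrites $\bigl(\tfrac{4}{n_1+n_2}\bigr)=\bigl(\tfrac{4}{n_1^{2}+n_2^{2}}\bigr)$ since both merely detect the parity of $n_1+n_2$. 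This yields the second, real, expression for $\tau_{10}(n)$, with the sign of the $(3n_2-3n_1)$ factor arising from the orientation chosen above.

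The Wirtinger differentiation and the reindexing of the theta sum are routine; the only genuinely delicate points are (i) tracking the overall constant correctly through the chain rule and the four-fold recombination of bracket terms, for which the single-coefficient normalization check is the safeguard, and (ii) confirming that $D_z^{4}$ lands on $\eta(\tau)^{10}$ rather than on some other weight-$5$ CM form, which is exactly what Lemma \ref{S} together with Serre's identification \cite{Serre} of the relevant Hecke character supplies. I expect (i) to be the main obstacle in practice.
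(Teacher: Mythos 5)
Your proposal is correct and follows essentially the same route as the paper, which obtains all of the $\tau_r(n)$ corollaries by applying $D_z^{r/2-1}$ to the sum-side representation of $\phi_r(z;\tau)$ with $z=z_1+iz_2$ (for CM by $\Q(i)$) and setting $z=0$; your Wirtinger computation pulling out $\left(\tfrac{a-ib}{2}\right)^4$, the collapse $(u-iv)^4-(u+iv)^4=-8i\,uv(u^2-v^2)$ with $u=3n_1+1$, $v=3n_2+1$, and the normalization $C=\tfrac{i}{108}$ all check out against the stated constants. Incidentally, your derivation produces $-\bigl((3n_1+1)+i(3n_2+1)\bigr)^4$ for the third bracket term, which is what the consistency of the two displayed formulas requires, so the printed version's $-\bigl(3n_1+i(3n_2+1)\bigr)^4$ is a typo rather than a discrepancy in your argument.
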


\subsubsection{$r=14$}
As in the $r=10$ case, here we will use a difference of Hecke characters.  Let $K= \Q(\sqrt{-3})$ and let $\mathcal{O}_{K}$ be its ring of integers.  We again consider two different characters each with conductor $\m = 4 \sqrt{-3}$.  Thus $\alpha = \frac{n_1 + n_2 \sqrt{-3}}{2}$ is coprime to $\m$ if $n_1 \not\equiv 0 \pmod{3}$ and either $n_1 \equiv n_2 \equiv 1 \pmod{2}$ or $n_1 \equiv n_2 \equiv 0 \pmod{2}$ and $n_1 \not\equiv n_2 \pmod{4}$ and $\alpha \equiv^{*} 1 \pmod{\m}$ if $\alpha$ is of the form $\alpha = 12t+1 + 4r \sqrt{-3}$ or $\alpha = 12t + 7 + (4r+2) \sqrt{-3}$.  For each integral ideal $\a$ there exists a unique $\alpha = a + b \sqrt{-3}$ such that $a \equiv 1 \pmod{3}$, $a + b \equiv 1 \pmod{2}$ and $(\alpha) = \a$.  For each $\alpha$ coprime to $\m$ there exists a unique $0 \leq m \leq 5$ such that $ \rho^m \alpha = a + b \sqrt{-3}$ is this generator.  We define the two characters $\xi_{\m}^{\pm}$ by $\xi_{\m}^{\pm}(\alpha) = \rho^{6m} (-1)^{\frac{a \mp b -1}{2}} =(-1)^{\frac{a \mp b -1}{2}}$.  Explicitly, when $n_1 \equiv n_2 \equiv 0 \pmod{2}$ and $\alpha$ is coprime to $\m$ we have $\xi_{\m}^{\pm}(\alpha) = \left(\frac{2n_1}{3} \right) (-1)^{\frac{n_1 \mp n_2 -2}{4}}$.  When $n_1 \equiv n_2 \equiv 1 \pmod{2}$ and $\alpha$ is coprime to $\m$ we have
\begin{equation*}
\xi_{\m}^{+}(\alpha) = \begin{cases} \left(\frac{12}{n_1}\right) \left(\frac{-4}{n_2} \right)(-1)^{\frac{n_1 + n_2 -2}{4}} & n_1 \equiv n_2 \pmod{4} \\
 \left(\frac{12}{n_1}\right) \left(\frac{-4}{n_2} \right)(-1)^{\frac{n_2 -1}{2}} & n_1 \equiv -n_2 \pmod{4}
\end{cases}
\end{equation*}
and
\begin{equation*}
\xi_{\m}^{-}(\alpha) = \begin{cases} \left(\frac{12}{n_1}\right) \left(\frac{-4}{n_2} \right)(-1)^{\frac{n_2 -1}{2}} & n_1 \equiv n_2 \pmod{4} \\
 \left(\frac{12}{n_1}\right) \left(\frac{-4}{n_2} \right)(-1)^{\frac{n_1 -n_2 +2}{4}} & n_1 \equiv -n_2 \pmod{4}
\end{cases}
\end{equation*}
We let
\begin{equation*}
\phi_{14}(z; \tau) =\frac{1}{2} \left( \phi_{\xi^{+}}^{*}(z; \tau) - \phi_{\xi^{-}}^{*}(z; \tau) \right),
\end{equation*}
then we have the following.
\begin{Prop}
Assume the notation above.  Then
\begin{align*}
\phi_{14}(z; \tau) &= \sum_{n_1, n_2 \in \Z} \left(\frac{4}{n_1 + n_2} \right) \left(\frac{n_1}{3} \right) \left( (-1)^{\frac{n_1 - n_2 -1}{2}} - (-1)^{\frac{n_1 +n_2 -1}{2}} \right) \zeta_{1}^{\frac{n_1}{12}} \zeta_{2}^{\frac{n_2}{4}} q^{\frac{n_{1}^2 + 3 n_{2}^2}{12}} \\
&+\sum_{\substack{n_1, n_2 \in \Z \\ n_1 \equiv n_2 \pmod{4}}} \left(\frac{12}{n_1} \right) \left(\frac{-4}{n_2} \right) \left( (-1)^{\frac{n_1 + n_2 -2}{4}} - (-1)^{\frac{n_2 -1}{2}} \right) \zeta_{1}^{\frac{n_1}{24}} \zeta_{2}^{\frac{n_2}{8}} q^{\frac{n_{1}^2 + 3 n_{2}^2}{48}} \\
&+\sum_{\substack{n_1, n_2 \in \Z \\ n_1 \equiv -n_2 \pmod{4}}} \left(\frac{12}{n_1} \right) \left(\frac{-4}{n_2} \right) \left( (-1)^{\frac{n_2 -1}{2}} - (-1)^{\frac{n_1 -n_2 +2}{4}} \right) \zeta_{1}^{\frac{n_1}{24}} \zeta_{2}^{\frac{n_2}{8}} q^{\frac{n_{1}^2 + 3 n_{2}^2}{48}} \\
&= \frac{\theta \left(\frac{z_1 + 3z_2}{24} \right) \theta \left(\frac{z_1 -3z_2}{24} \right) \theta \left( \frac{z_1}{12} \right) \theta \left( \frac{z_1 - z_2}{8} \right) \theta \left( \frac{z_1 + z_2}{8} \right) \theta \left( \frac{z_2}{4} \right)}{\eta(\tau)^4}.
\end{align*}
\end{Prop}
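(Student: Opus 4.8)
The plan is to establish the two asserted equalities separately: first the \emph{sum-side} identity, which rewrites $\phi_{14}$ as the three explicit lattice sums, and then the \emph{product-side} identity, which expresses it as the quotient of six Jacobi theta functions by $\eta(\tau)^4$. Both sides are weight $1$ Jacobi forms: the left-hand side because it is the difference $\tfrac12\!\left(\phi_{\xi^{+}}^{*}-\phi_{\xi^{-}}^{*}\right)$ of two Jacobi forms with CM supplied by Theorem \ref{Main} (equivalently Theorem \ref{P}, since here $k=1$ and $\xi$ is primitive), and the right-hand side because a quotient of theta functions and powers of $\eta(\tau)$ of this shape is a theta block in the sense of \cite{GSZ}. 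I would begin by confirming that the weights, indices, levels, and characters agree on the two sides, so that the identity is at least an equality between forms living on the same space.

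For the sum-side identity I would substitute the explicit values of $\xi_{\m}^{\pm}$ recorded just above the statement into $\phi_{\xi^{\pm}}^{*}(z;\tau)=\sum_{\alpha\in\mathcal{O}_{K}}\xi_{\m}^{\pm}(\alpha)\,e^{2\pi i B^{*}(\alpha,z)}q^{Q^{*}(\alpha)}$, parametrize $\alpha=\tfrac{n_1+n_2\sqrt{-3}}{2}$ with $n_1\equiv n_2\pmod 2$, and take the difference $\tfrac12\!\left(\phi_{\xi^{+}}^{*}-\phi_{\xi^{-}}^{*}\right)$. Splitting the lattice according to the parity of $(n_1,n_2)$ and, in the even class, according to whether $n_1\equiv n_2$ or $n_1\equiv -n_2\pmod 4$, produces exactly the three displayed sums; the differing denominators $12$ and $48$ reflect a rescaling of the summation variable between the two parity classes. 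The only work here is elementary: collapsing the products of Kronecker symbols $\left(\frac{12}{n_1}\right)\left(\frac{-4}{n_2}\right)$ and the sign factors $(-1)^{\bullet}$ by quadratic reciprocity, exactly in the style of the character computations in \cite{CS}, to obtain the symbols $\left(\frac{4}{n_1+n_2}\right)\left(\frac{n_1}{3}\right)$ appearing in the first sum.

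The substantial step is the product-side identity, which is precisely the Macdonald identity for the root system $G_2$: the six arguments $\tfrac{z_1\pm 3z_2}{24}$, $\tfrac{z_1}{12}$, $\tfrac{z_1\pm z_2}{8}$, $\tfrac{z_2}{4}$ are, up to scaling, the six positive roots of $G_2$, and the specialization count $\eta(\tau)^{3\cdot 6}/\eta(\tau)^{4}=\eta(\tau)^{14}$ matches $r=14$. I would prove it in the same spirit as the $A_2$ and $B_2$ cases treated above (Proposition \ref{8} and the $r=10$ proposition): expand the six-fold product using the Fourier expansions of Propositions \ref{BB} and \ref{BB2} together with the $q$-series of $\eta(\tau)^{-4}$, collect the resulting lattice sum, and match it termwise against the sum side. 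Equivalently, one may invoke the affine Weyl--Kac denominator formula for $G_2^{(1)}$, or argue by rigidity, since a holomorphic theta block of weight $1$ with the given index and character lies in a space small enough that agreement of finitely many Fourier coefficients forces equality.

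The main obstacle is the bookkeeping in this last step: controlling the half-integral shifts in the theta arguments, the interaction of the congruence conditions modulo $4$ with the Jacobi triple product expansions, and the signs and characters, so that the two lattice descriptions coincide exactly rather than up to an unresolved constant. As a sanity check on the normalization I would apply $D_z^{6}$ at $z=0$, using the formula $D_z(\theta(z;\tau))|_{z=0}=\eta(\tau)^3$ from the remark after Lemma \ref{S}, to confirm that both sides specialize to the same nonzero multiple of $\eta(\tau)^{14}$.
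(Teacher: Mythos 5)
Your proposal is correct and follows essentially the same route as the paper, which omits a proof of this proposition entirely and defers the product identity to Macdonald \cite{M} and the theta-block literature \cite{GSZ} --- exactly your reduction of the second equality to the $G_2$ Macdonald/denominator identity, with the first equality obtained, as you say, by direct substitution of the characters $\xi_{\m}^{\pm}$ and the rescaling $n_i \mapsto 2n_i$ in the even parity class. One minor remark: no quadratic reciprocity is needed for the first sum, since after rescaling $\left(\frac{2\cdot 2n_1}{3}\right)=\left(\frac{n_1}{3}\right)$ and $\left(\frac{4}{n_1+n_2}\right)$ is simply the indicator that $n_1+n_2$ is odd, which encodes the coprimality condition on $\alpha$.
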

\begin{rmk}
This identity is equivalent to the MacDonald identity for the root system $G_2$.  To see this more clearly, let $r_1 = \frac{z_1 + 3z_2}{24}$ and $r_2 = \frac{z_1 -3z_2}{24}$.  Then the arguments of the theta functions are $r_1, r_2, r_1 +r_2, r_1 +2r_2, 2r_1 +r_2, r_1 -r_2$ which is a choice of positive roots for $G_2$.
\end{rmk}
Define the coefficients $\tau_{14}(n)$ by 
\begin{equation*}
\sum_{n \equiv 7 \pmod{12}} \tau_{14}(n) q^{\frac{n}{12}} = q^{\frac{7}{12}} \prod_{n \geq 1} (1-q^n)^{14}.
\end{equation*}

\begin{Cor}
Assume the notation above.  Then
\begin{align*}
&\tau_{14}(n) = -\frac{1}{1440 \sqrt{-3}} \sum_{\substack{n_1, n_2 \in \Z \\ n_{1}^2 + 3n_{2}^2 = n}} \left( \frac{4}{n_1 + n_2} \right) \left( \frac{n_1}{3} \right) \left( (-1)^{\frac{n_1 - n_2 -1}{2}} - (-1)^{\frac{n_1 + n_2 -1}{2}} \right)(n_1 - n_2 \sqrt{-3})^6 \\
&=\frac{1}{60}\sum_{\substack{n_1 \geq 0 \\ n_{1}^2 + 3n_{2}^2 =n}} (-1)^{\frac{n_1 -n_2 -1}{2}} \left( \frac{4}{n_2 (n_1 + n_2)} \right) \left( \frac{n_1}{3} \right)n_1 n_2 (n_1 +n_2)(n_1 -n_2)(n_1 +3n_2)(n_1 -3n_2) \\
&= -\frac{1}{184320 \sqrt{-3}} \left[\sum_{\substack{n_1 \equiv n_2 \pmod{4} \\ n_{1}^2 + 3n_{2}^2 = 4n}} \left(\frac{12}{n_1} \right) \left(\frac{-4}{n_2} \right) \left( (-1)^{\frac{n_1 + n_2 -2}{4}} - (-1)^{\frac{n_2 -1}{2}} \right) (n_1 - n_2 \sqrt{-3})^6 \right. \\
&\left. +\sum_{\substack{n_1, n_2 \in \Z \\ n_1 \equiv -n_2 \pmod{4}}} \left(\frac{12}{n_1} \right) \left(\frac{-4}{n_2} \right) \left( (-1)^{\frac{n_2 -1}{2}} - (-1)^{\frac{n_1 -n_2 +2}{4}} \right) (n_1 - n_2 \sqrt{-3})^6 \right] \\
&=\frac{1}{3840} \sum_{\substack{n_1 \equiv n_2 \pmod{4} \\ n_{1}^2 + 3n_{2}^2 = 4n}} \left( \frac{12}{n_1} \right) \left( \frac{4}{n_2} \right)n_1 n_2 (n_1 +n_2)(n_1 -n_2)(n_1 +3n_2)(n_1 -3n_2)
\end{align*}

\end{Cor}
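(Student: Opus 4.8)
The plan is to derive all four expressions for $\tau_{14}(n)$ by applying the operator $D_{z}^{6}\left[\,\cdot\,\right]_{z=0}$ to the identity of the preceding Proposition, exactly as described at the start of this subsection with $r=14$ (so $\frac{r}{2}-1=6$) and $z=\frac{z_1+\sqrt{-3}z_2}{2}$, since the relevant field is $\Q(\sqrt{-3})$. First I would record how $D_{z}$ acts on the sum-side. Writing $z_1=z+\overline{z}$ and $z_2=\frac{z-\overline{z}}{\sqrt{-3}}$, a summand $\zeta_{1}^{n_1/12}\zeta_{2}^{n_2/4}=e\!\left(\frac{n_1 z_1}{12}+\frac{n_2 z_2}{4}\right)$ becomes $e\!\left(\frac{n_1-n_2\sqrt{-3}}{12}\,z+\frac{n_1+n_2\sqrt{-3}}{12}\,\overline{z}\right)$, so the holomorphic derivative $D_{z}=\frac{1}{2\pi i}\frac{\partial}{\partial z}$ brings down the factor $\frac{n_1-n_2\sqrt{-3}}{12}=\frac{\overline{\alpha}}{6}$ for $\alpha=\frac{n_1+n_2\sqrt{-3}}{2}$. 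Applying $D_{z}^{6}$ and setting $z=0$ therefore multiplies each term by $\left(\frac{n_1-n_2\sqrt{-3}}{12}\right)^{6}$ and collapses every exponential to $1$; this is precisely the spherical polynomial factor from \eqref{spherical} and is what produces the sixth powers $(n_1-n_2\sqrt{-3})^{6}$ in the statement. The same computation applied to the terms carrying $\zeta_{1}^{n_1/24}\zeta_{2}^{n_2/8}$ instead brings down $\left(\frac{n_1-n_2\sqrt{-3}}{24}\right)^{6}$, which is the source of the different prefactor in the third expression.

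On the product-side the same operator yields a multiple of $\eta(\tau)^{14}$: the six theta factors each vanish at $z=0$, so the only surviving term of $D_{z}^{6}\left[\,\cdot\,\right]_{z=0}$ is the one differentiating each factor exactly once, which by the Jacobi derivative formula $D_{z}(\theta(cz;\tau))|_{z=0}=c\,\eta(\tau)^{3}$ (the remark following Lemma \ref{S}) gives $6!\left(\prod_i c_i\right)\eta(\tau)^{18}/\eta(\tau)^{4}=C\,\eta(\tau)^{14}$ with $C$ an explicit nonzero constant read off from the six positive-root arguments of $G_2$. Comparing with $\sum_{n\equiv 7\,(12)}\tau_{14}(n)q^{n/12}=q^{7/12}\prod_{n\geq1}(1-q^n)^{14}$ and matching the coefficient of $q^{n/12}$ then expresses $\tau_{14}(n)$ as a lattice sum. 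The point requiring care is that the sum-side splits by the two $q$-scalings appearing in the Proposition: the block carrying $q^{(n_1^2+3n_2^2)/12}$ is matched by $n_1^2+3n_2^2=n$ (giving the first expression) and the block carrying $q^{(n_1^2+3n_2^2)/48}$ is matched by $n_1^2+3n_2^2=4n$ (giving the third), and one must check that \emph{each} block separately is a multiple of $\eta(\tau)^{14}$. Here I would invoke that each block is itself the specialization of a CM theta series attached to the Hecke character $\xi^{\pm}$ restricted to a sublattice of $\mathcal{O}_{K}$, so by Serre's classification its weight-$7$ specialization lies in the one-dimensional space spanned by $\eta(\tau)^{14}$; the two proportionality constants are what turn the derivative prefactors $12^{-6}$ and $24^{-6}$ into the stated $-\frac{1}{1440\sqrt{-3}}$ and $-\frac{1}{184320\sqrt{-3}}$.

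It remains to pass from the complex expressions to the factored real ones (the second and fourth displays). Here I would symmetrize each lattice sum under $(n_1,n_2)\mapsto(n_1,-n_2)$, equivalently $\alpha\mapsto\overline{\alpha}$: this map fixes $q^{(n_1^2+3n_2^2)/12}$ and $\left(\frac{n_1}{3}\right)$, sends $\left(\frac{4}{n_1+n_2}\right)\mapsto\left(\frac{4}{n_1-n_2}\right)$, and interchanges the two signs $(-1)^{(n_1\mp n_2-1)/2}$, hence reverses the antisymmetric weight while conjugating $(n_1-n_2\sqrt{-3})^{6}$. Consequently only the antisymmetric part of the sixth power survives, and a direct expansion gives
\begin{equation*}
(n_1-n_2\sqrt{-3})^{6}-(n_1+n_2\sqrt{-3})^{6}=-12\sqrt{-3}\,n_1 n_2 (n_1-n_2)(n_1+n_2)(n_1-3n_2)(n_1+3n_2),
\end{equation*}
which is exactly the product of linear forms in the real expressions (and, via $r_1=\frac{z_1+3z_2}{24}$, $r_2=\frac{z_1-3z_2}{24}$, matches the six positive roots of $G_2$). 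The prefactor $\frac{1}{\sqrt{-3}}$ cancels against this $\sqrt{-3}$, and folding in the factor $2$ from restricting to $n_1\geq0$ together with the consolidation of the two-term sign into a single term with a $\left(\frac{4}{n_2}\right)$ factor converts $-\frac{1}{1440\sqrt{-3}}$ into $\frac{1}{60}$ and $-\frac{1}{184320\sqrt{-3}}$ into $\frac{1}{3840}$.

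The conceptual work is entirely in the first two paragraphs; the main obstacle is bookkeeping. Concretely, the two delicate points are (i) verifying that each of the two $q$-scaled blocks specializes to $\eta(\tau)^{14}$ individually rather than only their sum -- i.e. that the two sublattice pieces are each proportional to $\eta(\tau)^{14}$ -- and pinning down the corresponding proportionality constants, and (ii) correctly combining the derivative prefactors $12^{-6}$ and $24^{-6}$, the constant $C$ from the theta-derivative product, the overall $\frac{1}{2}$ from $\phi_{14}=\frac{1}{2}(\phi^{*}_{\xi^{+}}-\phi^{*}_{\xi^{-}})$, and the symmetrization factors so that all four displayed constants emerge. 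The polynomial factorization above is routine but essential and should be checked by expanding $(n_1-n_2\sqrt{-3})^{6}$ directly.
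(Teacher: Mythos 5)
Your proposal follows exactly the route the paper prescribes (it omits the proof but states that every corollary in Section 4.1 is obtained by applying $D_{z}^{\frac{r}{2}-1}\left[\phi_{r}(z;\tau)\right]_{z=0}$ with $z=\frac{z_1+\sqrt{-3}z_2}{2}$): your derivative factor $\frac{n_1-n_2\sqrt{-3}}{12}$, the factorization $(n_1-n_2\sqrt{-3})^6-(n_1+n_2\sqrt{-3})^6=-12\sqrt{-3}\,n_1n_2(n_1-n_2)(n_1+n_2)(n_1-3n_2)(n_1+3n_2)$, and the constant bookkeeping (the factor $2$ from $n_1\geq 0$ and the factor $2$ from collapsing the sign difference into $2(-1)^{\frac{n_1-n_2-1}{2}}\left(\frac{4}{n_2}\right)$, turning $\frac{1}{240}$ into $\frac{1}{60}$) all check out. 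The one point you flag that the paper also leaves implicit --- that the even-coordinate block and the odd-coordinate blocks each specialize \emph{separately} to a multiple of $\eta(\tau)^{14}$ --- is a genuine gap in both accounts, but your proposed justification via the one-dimensionality of the span of the associated CM newform is adequate.
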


\subsubsection{$r=26$}

For $r=26$ we will consider a linear combination of $4$ Hecke characters.  Two of the characters are from $K'=\Q(\sqrt{-3})$ and two are from $K'' =\Q(i)$.  For $K'$ we consider the same two characters of conductor $\m'=4 \sqrt{-3}$ from the $r=14$ case and denote these by $\xi_{\m'}^{\pm}$.  For $K''$ we consider the two characters of conductor $\m''=6$ from the $r=10$ case cubed and denote them by $\xi_{\m''}^{\pm} = \left(\xi_{6}^{\pm} \right)^3$.  Let
\begin{equation*}
\phi_{26}(z; \tau) = \frac{1}{4} \left(\phi_{\xi_{\m'}^{+}}^{*}(z; \tau) + \phi_{\xi_{\m'}^{-}}^{*}(z; \tau) -\phi_{\xi_{\m''}^{+}}^{*}(z; \tau) -\phi_{\xi_{\m''}^{-}}^{*}(z; \tau) \right).
\end{equation*}
We now restate Theorem \ref{26Thm}.
\begin{Prop} \label{26}
Assume the notation above.  Then
\begin{align*}
\phi_{26}(z; \tau) &= \frac{1}{2} \sum_{n_3, n_4 \in \Z} (-1)^{\frac{n_3 + n_4 -1}{2}}  \left( \frac{4}{n_3(n_4 +1)} \right) \left(\frac{n_3}{3} \right) \zeta_{3}^{\frac{n_3}{12}} \zeta_{4}^{\frac{n_4}{4}} q^{\frac{n_{3}^2 + 3n_{4}^2}{12}} \\
&+\frac{1}{4} \sum_{n_{3} \equiv \pm n_4 \pmod{8}} \left(\frac{12}{n_3} \right)  \zeta_{3}^{\frac{n_3}{24}} \zeta_{4}^{\frac{n_4}{8}} q^{\frac{n_{3}^2 + 3n_{4}^2}{48}} \\
&-\frac{1}{2} \sum_{(n_1, n_2) \equiv (\pm 1, 0) \pmod{3}} (-1)^{n_2} \left( \frac{4}{n_1 +n_2} \right) \zeta_{1}^{\frac{n_{1}}{6}} \zeta_{2}^{\frac{n_{2}}{6}} q^{\frac{n_{1}^2 + n_{2}^2}{12}} \\
&+\frac{1}{2} \sum_{(n_1, n_2) \equiv (0, \pm 1) \pmod{3}} (-1)^{n_2} \left( \frac{4}{n_1 +n_2} \right) \zeta_{1}^{\frac{n_{1}}{6}} \zeta_{2}^{\frac{n_{2}}{6}} q^{\frac{n_{1}^2 + n_{2}^2}{12}} \\
&=\frac{1}{2} \theta^{*} \left(\frac{z_3}{6}; 2 \tau \right) \theta_{4} \left( \frac{z_4}{2}; 2 \tau \right) + \frac{1}{2} \theta^{*} \left(\frac{z_3}{12}; \frac{\tau}{2} \right) \theta_{2} \left( \frac{z_4}{4}; \frac{\tau}{2} \right) \\
&+ \frac{1}{4} \left[ \theta^{*}\left( \frac{z_3}{12} +\frac{1}{4}; \frac{\tau}{2} \right) \theta_{2} \left(\frac{z_4}{4} -\frac{1}{4}; \frac{\tau}{2} \right) + \theta_{2}^{*} \left(\frac{z_3}{12} +\frac{1}{4}; \frac{\tau}{2} \right) \theta \left(\frac{z_4}{4} -\frac{1}{4}; \frac{\tau}{2} \right) \right] \\
&+ \frac{1}{4} \left[ \theta^{*}\left( \frac{z_3}{12} +\frac{1}{4}; \frac{\tau}{2} \right) \theta_{2} \left(\frac{z_4}{4} +\frac{1}{4}; \frac{\tau}{2} \right) - \theta_{2}^{*} \left(\frac{z_3}{12} +\frac{1}{4}; \frac{\tau}{2} \right) \theta \left(\frac{z_4}{4} +\frac{1}{4}; \frac{\tau}{2} \right) \right] \\
&-\frac{1}{2} \theta^{*} \left(\frac{z_1 + z_2}{6}; \tau \right) \theta^{*} \left(\frac{z_1 -z_2}{6}; \tau \right).
\end{align*}
\end{Prop}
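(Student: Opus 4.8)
The plan is to reduce everything to the explicit Hecke characters already computed in the $r=14$ and $r=10$ subsections, using that both $\Q(\sqrt{-3})$ and $\Q(i)$ have class number one, so that $\theta_{\xi}=\phi_{\xi,C}^{*}$ and each $\phi_{\xi}^{*}$ is the single lattice sum $\sum_{\alpha\in\mathcal{O}_K}\xi_{\m}(\alpha)e^{2\pi i B^{*}(\alpha,z)}q^{Q^{*}(\alpha)}$. For the first equality (the four-line sum-side) I would substitute the piecewise formulas for $\xi_{\m'}^{\pm}$ of conductor $4\sqrt{-3}$ and for $\xi_{\m''}^{\pm}=(\xi_6^{\pm})^{3}$ of conductor $6$ into the defining combination $\tfrac14(\phi_{\xi_{\m'}^{+}}^{*}+\phi_{\xi_{\m'}^{-}}^{*}-\phi_{\xi_{\m''}^{+}}^{*}-\phi_{\xi_{\m''}^{-}}^{*})$. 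The point is that, in contrast to $\phi_{14}$ and $\phi_{10}$ which were \emph{differences}, here one sums $\xi^{+}+\xi^{-}$ and so retains only the symmetric part of each character. For $K''$ this is explicit: $(\xi_6^{\pm})^{3}(\alpha)=(-1)^{n_2}(\mp i)^{b}$ where $\alpha\equiv(1-i)^{b}\pmod 3$, so $(\xi_6^{+})^{3}+(\xi_6^{-})^{3}=(-1)^{n_2}\bigl(i^{b}+(-i)^{b}\bigr)$ vanishes unless $b\in\{0,2\}$, i.e. unless $(n_1,n_2)\equiv(\pm1,0)$ or $(0,\pm1)\pmod 3$, producing exactly the last two lines. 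The $K'$ characters combine analogously, the even and odd cosets of $\alpha=\tfrac{m_1+m_2\sqrt{-3}}{2}$ yielding (after a change of summation variables) the first two lines, with $q$-normalizations $\tfrac{n_3^2+3n_4^2}{12}$ and $\tfrac{n_3^2+3n_4^2}{48}$ respectively.

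For the second equality I would recognize each binary-quadratic-form sum as a product of the one-variable theta series recorded in Propositions \ref{BB} and \ref{BB2}. In each case the form separates, so one only needs to factor the accompanying Legendre-symbol and sign weight as a product of a function of one index and a function of the other, and then read off the modular argument from the $q$-normalization. For the first $K'$ line the weight $(-1)^{(n_3+n_4-1)/2}\leg{4}{n_3(n_4+1)}\leg{n_3}{3}$ splits, using the reciprocity identity $\leg{n_3}{3}\leg{3}{n_3}=\leg{-1}{n_3}$ to rewrite the $n_3$-part as $\leg{12}{n_3}$, and matching the exponent $q^{n_3^2/12}$ against the $n^2/24$-scaling of $\theta^{*}$ forces the modular argument $2\tau$; this gives $\tfrac12\theta^{*}(\tfrac{z_3}{6};2\tau)\theta_{4}(\tfrac{z_4}{2};2\tau)$. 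The last $K''$ term is handled by the rotation $u=n_1+n_2,\ v=n_1-n_2$, which diagonalizes $n_1^2+n_2^2$ and, since $\leg{12}{u}\leg{12}{v}$ is supported exactly on $(n_1,n_2)\equiv(\pm1,0),(0,\pm1)\pmod 3$, factors the sum as $-\tfrac12\theta^{*}(\tfrac{z_1+z_2}{6};\tau)\theta^{*}(\tfrac{z_1-z_2}{6};\tau)$.

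The main obstacle is the remaining $K'$ line, whose summation condition is the \emph{coupled} congruence $n_3\equiv\pm n_4\pmod 8$: here the sum does not factor over $n_3$ and $n_4$ before the coupling is disentangled. I would split this diagonal/antidiagonal condition into residue classes modulo $8$, which rewrites it as independent sums over $n_3$ and $n_4$ translated by half-integers; these translations are precisely what produce the arguments $\tfrac{z_3}{12}+\tfrac14$ and $\tfrac{z_4}{4}\mp\tfrac14$, and the $q$-normalization $\tfrac{1}{48}$ forces the modular argument $\tfrac{\tau}{2}$. Tracking the signs through this splitting—so that the four shifted products assemble with the exact coefficients $+\tfrac14$ and $-\tfrac14$ and combine the $\theta^{*},\theta_2^{*},\theta,\theta_2$ pieces as stated—is the delicate bookkeeping, and it is where the parities of the exponents $\tfrac{n_3\pm n_4-2}{4}$ appearing in $\xi_{\m'}^{\pm}$ and a further application of quadratic reciprocity must be handled with care.

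Finally, the $\eta(\tau)^{26}$ specialization in Theorem \ref{26Thm} follows from Lemma \ref{S}: writing $z=\tfrac{z_3+\sqrt{-3}z_4}{2}$ and applying $D_{z}^{12}$ before setting $z=0$ sends $\phi_{26}$ to a nonzero multiple of the weight $13$ CM form $\theta_{\xi_{12}}$, which by Serre's classification \cite{Serre} is a constant multiple of $\eta(\tau)^{26}$; comparing leading Fourier coefficients then fixes the constant $\tfrac{1}{2^{8}\cdot 3^{4}\cdot 11^{2}\cdot 13}$.
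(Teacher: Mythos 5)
Your proposal follows essentially the same route as the paper: the sum-side is obtained by substituting the explicit Hecke characters from the $r=10$ and $r=14$ cases into the defining linear combination (with the symmetrization $\xi^{+}+\xi^{-}$ killing the odd part, exactly as you observe), the separable sums are matched to products of the theta functions of Propositions \ref{BB} and \ref{BB2} by Jacobi-symbol manipulation, and the coupled congruence $n_3\equiv\pm n_4\pmod 8$ is resolved by detecting it with sign characters whose cross terms $(-1)^{(n_3\pm n_4)/4}$ produce precisely the $\pm\tfrac14$ shifts in the theta arguments. This is the paper's argument (which likewise leaves the bookkeeping of the middle term at the level of a described computation), so the proposal is correct and not materially different.
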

\begin{proof}
The first equality for $\phi_{26}(z;\tau)$ follows from the Hecke characters constructed for the $r=10$ and $r=14$ cases.  It is then straightforward to show
\begin{equation*}
\sum_{n_3, n_4 \in \Z} (-1)^{\frac{n_3 + n_4 -1}{2}}  \left( \frac{4}{n_3(n_4 +1)} \right) \left(\frac{n_3}{3} \right) \zeta_{3}^{\frac{n_3}{12}} \zeta_{4}^{\frac{n_4}{4}} q^{\frac{n_{3}^2 + 3n_{4}^2}{12}} =\theta^{*} \left(\frac{z_3}{6}; 2 \tau \right) \theta_{4} \left( \frac{z_4}{2}; 2 \tau \right) 
\end{equation*}
and 
\begin{align*}
&\sum_{(n_1, n_2) \equiv (\pm 1, 0) \pmod{3}} (-1)^{n_2} \left( \frac{4}{n_1 +n_2} \right) \zeta_{1}^{\frac{n_{1}}{6}} \zeta_{2}^{\frac{n_{2}}{6}} q^{\frac{n_{1}^2 + n_{2}^2}{12}} \\
&-\sum_{(n_1, n_2) \equiv (0, \pm 1) \pmod{3}} (-1)^{n_2} \left( \frac{4}{n_1 +n_2} \right) \zeta_{1}^{\frac{n_{1}}{6}} \zeta_{2}^{\frac{n_{2}}{6}} q^{\frac{n_{1}^2 + n_{2}^2}{12}} \\
&= \theta^{*} \left(\frac{z_1 + z_2}{6}; \tau \right) \theta^{*} \left(\frac{z_1 -z_2}{6}; \tau \right)
\end{align*}
by simply manipulating the Jacobi symbols inside the sums.  We then need to evaluate 
\begin{equation*}
\sum_{n_{3} \equiv \pm n_4 \pmod{8}} \left(\frac{12}{n_3} \right) \left(\frac{4}{n_4} \right) \zeta_{3}^{\frac{n_3}{24}} \zeta_{4}^{\frac{n_4}{8}} q^{\frac{n_{3}^2 + 3n_{4}^2}{48}}.
\end{equation*}
This is done by inserting
\begin{equation*}
\frac{1}{4} \left(1 +(-1)^{\frac{n_3 -n_4}{2}} \right) \left( 1 + (-1)^{\frac{n_3 -n_4}{4}} \right) = \begin{cases} 1 & n_3 \equiv n_4 \pmod{8} \\ 0 & n_3 \not\equiv n_4 \pmod{8}
\end{cases}
\end{equation*}
and
\begin{equation*}
\frac{1}{4} \left(1 +(-1)^{\frac{n_3 +n_4}{2}} \right) \left( 1 + (-1)^{\frac{n_3 +n_4}{4}} \right) = \begin{cases} 1 & n_3 \equiv -n_4 \pmod{8} \\ 0 & n_3 \not\equiv -n_4 \pmod{8}
\end{cases}
\end{equation*}
for $n_3$ and $n_4$ both odd into the relevant sums.  The theta function identities are then obtained by naively multiplying these terms out and manipulating the Jacobi symbols inside each sum.  The shifts in the theta functions by $\pm \frac{1}{4}$ arise from the terms $(-1)^{\frac{n_3-n_4}{4}}, (-1)^{\frac{3(n_3-n_4)}{4}}, (-1)^{\frac{n_3+n_4}{4}}$, and $(-1)^{\frac{3(n_3+n_4)}{4}}$.  For example,
\begin{equation*}
\theta^{*}\left( \frac{z_3}{12} +\frac{1}{4}; \frac{\tau}{2} \right) \theta_{2} \left(\frac{z_4}{4} +\frac{1}{4}; \frac{\tau}{2} \right) = \sum_{n_3, n_4 \in \Z} \left(\frac{12}{n_3} \right) \left( \frac{4}{n_4} \right) (-1)^{\frac{n_3 +n_4}{4}} \zeta_{3}^{\frac{n_3}{24}} \zeta_{4}^{\frac{n_4}{8}} q^{\frac{n_{3}^2 +3n_{4}^2}{48}}.
\end{equation*}

\end{proof}

\subsection{Theta blocks for elliptic curves with CM}
In \cite{MO} Martin and Ono classify all of the weight $2$ modular forms associated to elliptic curves which are eta-quotients.  Of those eta-quotients they also determined which corresponded to elliptic curves with complex multiplication.  These forms are tabulated below.
\begin{center}
\begin{table}[H]
\begin{tabular}{ | m{3.25cm} | m{3.25cm} | m{3.25cm} | m{3.25cm} |}
\hline
 Conductor & Eta-quotient & Field & Elliptic curve  \\ 
 \hline
 $27$ & $\eta^{2}(3 \tau) \eta^{2}(9\tau)$ & $\Q(\sqrt{-3})$ & $y^2 +y = x^3 -7$  \\
 \hline
 $32$ & $\eta^2(4 \tau) \eta^2 (8 \tau)$ & $\Q(i)$ & $y^2 = x^3 +4x$  \\
 \hline   
 $36$ & $\eta^4(6 \tau)$ & $\Q(\sqrt{-3})$ & $y^2 =x^3 +1$  \\
 \hline
   $64$ & $\frac{\eta^8(8 \tau)}{\eta^{2}(4 \tau) \eta^{2}(16 \tau)}$ & $\Q(i)$ & $y^2 = x^3 -4x$  \\
 \hline
 $144$ & $\frac{\eta^{12}(12 \tau)}{\eta^{4}(6 \tau) \eta^{4}(24 \tau)}$ & $\Q(\sqrt{-3})$ & $y^2 =x^3 -1$ \\
 \hline
\end{tabular}
 \caption{Eta-quotients corresponding to elliptic curves with complex multiplication.}
 \end{table}
 \end{center}
Using this result we can classify all theta blocks associated to elliptic curves with complex multiplication.  We say a Jacobi form $\phi(z; \tau)$ corresponds to an elliptic curve if $D_{z}\left[\phi(z;\tau)\right]_{z=0}$ is equal to the weight $2$ newform associated to the elliptic curve, up to a nonzero constant.
\begin{Thm} \label{elliptic}
Let $\phi_{N}(z; \tau)$ correspond to the elliptic curve with Conductor $N$.  Then the theta blocks corresponding to elliptic curves with CM are
\begin{enumerate}
\item $N=27$: 
\begin{align*}
&\phi_{27}(z; 3\tau) = \sum_{n_1 \equiv n_2 \pmod{2}} \left( \frac{2n_1}{3} \right) \rho^{3n_1 -n_2} \zeta_{1}^{\frac{n_{1}}{6}} \zeta_{2}^{\frac{n_{2}}{2}} q^{\frac{3\left(n_{1}^2 +3n_{2}^2\right)}{4}} \\
&= \frac{\theta\left(\frac{z_1 +3z_2}{6} + \frac{1}{3};3 \tau \right) \theta \left( \frac{z_1 -3z_2}{6} +\frac{2}{3}; 3\tau \right) \theta \left(\frac{z_1}{3};3 \tau \right)}{\eta(3\tau)}.
\end{align*}
\item $N=32$:
\begin{align*}
\phi_{32}(z; \tau) &=-\sum_{n_1, n_2 \in Z} \left(\frac{-4}{n_1(n_2-1)} \right) \zeta_{1}^{\frac{n_1}{4}} \zeta_{2}^{\frac{n_2}{4}} q^{n_{1}^2 + n_{2}^2} \\
&= \theta \left(\frac{z_1}{2}; 8 \tau \right) \theta_{4} \left(\frac{z_2}{2}; 8 \tau \right).
\end{align*}
\item $N=36$:
\begin{align*}
\phi_{36}(z; \tau) &= \sum_{n_1, n_2 \in \Z} \left(\frac{12}{n_1} \right) \left(\frac{-4}{n_2} \right) \zeta_{1}^{\frac{n_1}{12}} \zeta_{2}^{\frac{n_2}{4}} q^{\frac{n_{1}^2 + 3 n_{2}^2}{4}} \\
&= \theta^{*} \left(\frac{z_1}{6}; 6\tau \right) \theta \left(\frac{z_2}{2}; 6 \tau \right).
\end{align*}
\item $N=64$:
\begin{align*}
\phi_{64}(z; \tau) &= \sum_{n_1, n_2 \in \Z} (-1)^{\frac{n_1 -1}{2}} \left( \frac{4}{n_1(n_2 -1)} \right) \zeta_{1}^{\frac{n_1}{4}} \zeta_{2}^{\frac{n_2}{4}} q^{n_{1}^2 + n_{2}^2} \\
&= \theta \left(\frac{z_1}{2}; 8 \tau \right) \theta_{3} \left(\frac{z_2}{2}; 8 \tau \right).
\end{align*}
\item $N=144$:
\begin{align*}
\phi_{144}(z; \tau) &= \frac{1}{2} \sum_{n_1, n_2 \in \Z} \left(1 + (-1)^{\frac{n_1 + n_2}{2}} \right) \left(\frac{12}{n_1 -3} \right) \left( \frac{4}{n_2 -1} \right) \zeta_{1}^{\frac{n_1}{12}} \zeta_{2}^{\frac{n_2}{4}} q^{\frac{n_{1}^2 + 3n_{2}^2}{4}} \\
&=\frac{1}{2} \left[ \theta_{3}^{*} \left( \frac{z_1}{6}; 6 \tau \right) \theta_{4} \left(\frac{z_2}{2}; 6 \tau \right) + \theta_{4}^{*} \left(\frac{z_1}{6}; 6 \tau \right) \theta_{3} \left( \frac{z_2}{2}; 6 \tau \right) \right].
\end{align*}
\end{enumerate}
\end{Thm}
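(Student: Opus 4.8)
The plan is to treat the five cases in parallel, each following the same three-step template already used for the powers of $\eta$ in Subsection 4.1. Since both $\Q(\sqrt{-3})$ and $\Q(i)$ have class number one, Theorem \ref{Main} collapses to a single class and gives $\theta_{\xi}=\phi^{*}_{\xi}$, so throughout I work with the weight-one normalized form $\phi^{*}_{\xi,C}$ of Theorem \ref{P}. The first task, for each conductor $N$, is to pin down the finite Hecke character $\xi_{\m}$ on $(\mathcal{O}_{K}/\m)^{\times}$ whose associated CM newform is the eta-quotient listed in Table 1; this is the character forced by the Martin--Ono classification \cite{MO} together with the fact that each such eta-quotient is the weight-two newform of an elliptic curve with CM. Writing $\alpha = n_1 w_1 + n_2 w_2$ and inserting the explicit value of $\xi_{\m}(\alpha)$ into the definition of $\phi^{*}_{\xi}$ immediately yields the lattice-sum (first) equality in each case, exactly as in the $r=2,\dots,14$ computations.

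The second and main step is the product identity. Here I would expand each Jacobi theta building block from Propositions \ref{BB} and \ref{BB2} as a one-dimensional theta series, multiply the two factors, and match the resulting double sum term-by-term against the prescribed lattice sum by manipulating Jacobi symbols and the congruence conditions on $(n_1,n_2)$. For $N=36$ this is essentially the rescaling $\tau \mapsto 6\tau$ of the simple weight-four form recorded in the remark following the $r=4$ computation, so $\phi_{36}=\theta^{*}(z_1/6;6\tau)\,\theta(z_2/2;6\tau)$ is read off directly. For $N=32$ and $N=64$ both theta factors live at $8\tau$, and the characters $\bigl(\tfrac{-4}{n_1(n_2-1)}\bigr)$ and $(-1)^{(n_1-1)/2}\bigl(\tfrac{4}{n_1(n_2-1)}\bigr)$ are matched against $\theta(z_1/2;8\tau)$ times $\theta_4$ or $\theta_3$ at $z_2/2$. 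The genuinely delicate cases are $N=27$ and $N=144$: the former carries the root-of-unity twist $\rho^{3n_1-n_2}$ together with the shifts $+\tfrac13,+\tfrac23$ inside the theta arguments, which I would produce by tracking the shift formula \eqref{idealfunctions} through the $r=8$ character $\xi_{\m}(\alpha)=(2n_1/3)$ after the substitution $\tau\mapsto 3\tau$; the latter requires inserting the projection $\tfrac12\bigl(1+(-1)^{(n_1+n_2)/2}\bigr)$ onto a residue class modulo $8$ and then splitting into the two shifted products $\theta^{*}_{3}\theta_{4}$ and $\theta^{*}_{4}\theta_{3}$, precisely mirroring the insertion trick in the proof of Proposition \ref{26}.

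Finally, with each product identity in hand, I would apply $D_{z}$ once and set $z=0$. By Lemma \ref{S} (with $k=2$, so that $D_{z}^{k-1}=D_{z}$) the specialization $D_{z}[\phi_{N}(z;\tau)]_{z=0}$ equals a nonzero constant times the CM newform $\theta_{\xi_{1}}(\tau)$, which by the Martin--Ono table is exactly the eta-quotient of conductor $N$; by the modularity theorem of Taylor and Wiles \cite{W,TW} this newform corresponds to the stated elliptic curve, establishing that $\phi_{N}$ is a theta block corresponding to that curve. I expect the main obstacle to be the bookkeeping in the second step for $N=27$ and $N=144$ --- keeping the quarter- and third-integer shifts, the cube of the $r=10$ character, and the modulo-$8$ splitting mutually consistent so that the Jacobi-symbol manipulations collapse to the advertised products --- whereas every other case reduces to a direct rescaling or relabeling of an identity already proved in Subsection 4.1.
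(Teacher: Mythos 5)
Your proposal follows essentially the same route as the paper: the cases $N=32,36,64$ are read off by multiplying two building blocks from Propositions \ref{BB} and \ref{BB2}, the $N=27$ case is recognized as the $r=8$ form with shifts in the elliptic variables (the paper uses $z_1\mapsto z_1+3$, $z_2\mapsto z_2-\tfrac13$), and the $N=144$ case is a sum of two shifted products. The only cosmetic difference is that the paper certifies the $N=144$ specialization with the explicit eta-quotient identity $\frac{\eta(12\tau)^{12}}{\eta(6\tau)^4\eta(24\tau)^4}=\frac12\bigl[\frac{\eta(6\tau)^{10}}{\eta(3\tau)^4\eta(12\tau)^2}+\frac{\eta(3\tau)^4\eta(12\tau)^2}{\eta(6\tau)^2}\bigr]$, whereas you appeal to Lemma \ref{S} together with the Martin--Ono classification.
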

\begin{rmk}
The identity for $N=27$ seems related to the cubic theta functions studied in \cite{Bor}.
\end{rmk}
\begin{rmk}
These Jacobi forms are of particular interest as in \cite{RV} Rodriguez Villegas showed the square of their $(k-1)$-st Taylor coefficient evaluated at a CM point is essentially the central value of $L(\xi^{2k-1}, k)$ where $\xi$ is the associated Hecke character.
\end{rmk}
\begin{proof}
In each case one can construct an explicit Hecke character to obtain the given Jacobi form.  However, here all of the identities except the $N=27$ and $N=144$ cases are evident from multiplying two Jacobi theta functions from Proposition \ref{BB}.  The $N=144$ case is a sum two products of the building block Jacobi theta functions and can be seen to be associated to the Elliptic curve by the identity
\begin{equation*}
\frac{\eta(12 \tau)^{12}}{\eta(6 \tau)^4 \eta(24 \tau)^4} = \frac{1}{2} \left[ \frac{\eta(6 \tau)^{10}}{\eta(3 \tau)^4 \eta(12 \tau)^2} + \frac{\eta(3 \tau)^4 \eta(12 \tau)^2}{\eta(6 \tau)^2} \right].
\end{equation*}
The $N=27$ case can be recognized as the $r=8$ form of the previous section with the shifts $z_1 \mapsto z_1 +3$ and $z_2 \mapsto z_2 -\frac{1}{3}$.
\end{proof}

\subsection{Ramanujan-type congruences and partition statistics}

We begin this subsection by showing how Ramanujan's partition congruences and the crank statistic connect to modular forms and Jacobi forms with CM and then use this case as a guideline.  Let $\ell$ be a prime and notice that modulo $\ell$ one can rewrite the partition generating function as 
\begin{equation*}
\prod_{n \geq 1} \frac{1}{1-q^n} = \prod_{n \geq 1} \frac{(1-q^n)^{\ell -1}}{(1-q^n)^{\ell}} \equiv \prod_{n \geq 1} \frac{(1-q^n)^{\ell -1}}{1-q^{\ell n}} \pmod{\ell}.
\end{equation*}
The denominator is only supported on exponents which are multiples of $\ell$ so one immediately finds that $p(\ell n + \delta) \equiv 0 \pmod{\ell}$ if and only if $a_{\ell -1}(\ell n + \delta) \equiv 0 \pmod{\ell}$ where $\sum_{n \geq 0} a_{k}(n) q^n := q^{-\frac{k}{24}} \eta(\tau)^{k}$.  One can then use the fact that $\eta^4, \eta^6$, and $\eta^{10}$ are modular with CM to quickly confirm the congruences.  Often when a Ramanujan-type congruence exists, it appears there is a theta function or modular form with CM in the background like this to explain it.  This idea was used by Boylan \cite{Boylan} and the author and Locus Dawsey \cite{DW} to prove Ramanujan-type congruences for $k$-colored partitions.  One can use a similar idea in order to discover the crank.  In general, if a function $f(\tau) = \sum_{n \geq 0} a(n) q^n$ satisfies a Ramanujan congruence $a(\ell n + \delta) \equiv 0 \pmod{\ell}$, then a function $f(z; \tau) = \sum_{n \geq 0} a_{n}(\zeta)q^n$ defines a statistic that explains this congruence if and only if $f(0; \tau) = f(\tau)$ and $\Phi_{\ell}(\zeta) \big| \left[q^{\ell n + \delta} \right] f(z; \tau)$ as Laurent polynomials, where $\Phi_{\ell}$ is the $\ell$-th cyclotomic polynomial \cite{RTW}.  One can then use the Jacobi forms $\phi_{r}(z; \tau)$ from Section 4.1 with $r=4,6,10$ to find the crank function.  Specifically, we want to find $C(z; \tau)$ such that
\begin{equation*}
C(z; \tau) = \frac{\widetilde{\phi}_{r}(az, bz; \tau)}{G(z; \tau)},
\end{equation*}
where $\widetilde{\phi}_{r}(z; \tau)$ is $\phi_r (z; \tau)$ with prefactors removed and we have specialized $z_1 =az$ and $z_2 =bz$ for choices of $a$ and $b$ to be determined.  One can quickly check that $\Phi_{\ell}(\zeta)\big| \left[q^{\ell n + \delta} \right] \widetilde{\phi}_{r}(z; \tau)$ as the primes $\ell = 5, 7, 11$ are inert in the field associated to $\xi$ for $r=4, 6$, and $10$ respectively.  One now wants the denominator to satisfy something equivalent to being supported on multiples of $\ell$.  Let $(1-\zeta^{\pm a} q^n) = (1-\zeta^a q^n)(1-\zeta^{-a}q^n)$. Then we want the denominator to have the shape
\begin{equation*}
\prod_{n \geq 1} \left[ (1-q^n) (1-\zeta^{\pm a_1} q^n)(1-\zeta^{\pm a_2}q^n) \cdots (1-\zeta^{\pm a_{\frac{\ell -1}{2}}}q^n) \right]^j = \left[1-q^{\ell n} + \Phi_{\ell}(\zeta) g(z; \tau) \right]^j,
\end{equation*}
where $\pm(a_1, a_2, \dots, a_{\frac{\ell -1}{2}})$ is a complete residue system modulo $\ell$, because then every coefficient of $G(z; \tau)^{-1}$ not supported on a multiple of $\ell$ is clearly divisible by $\Phi_{\ell}(\zeta)$.  The specializations $z_1 =az$ and $z_2 =bz$ then should be chosen to guarantee the quotient of $\widetilde{\phi}_{r}$ and $G(z;\tau)$ simplifies in some way.  For example, when $\ell =7$ we have
\begin{align*}
\widetilde{\phi}_{6}(az,bz;\tau) &= \frac{\zeta^{\frac{a}{2}}}{(1-\zeta^{\frac{a+b}{2}})(1-\zeta^{\frac{a-b}{2}})} \sum_{n_1, n_2 \in \Z} (-1)^{n_2} \left( \frac{4}{n_{1}^2 + n_{2}^2} \right) \zeta^{an_1 +bn_2} q^{\frac{n_{1}^2 + n_{2}^2 -1}{4}} \\
&= q^{-\frac{1}{4}} \frac{\zeta^{\frac{a}{2}}}{(1-\zeta^{\frac{a+b}{2}})(1-\zeta^{\frac{a-b}{2}})} \theta \left( \frac{a+b}{2} z \right) \theta \left( \frac{a-b}{2} z \right) \\
&= \prod_{n \geq 1} (1-q^n)^2 (1-\zeta^{\pm(a+b)/2} q^n)(1-\zeta^{\pm(a-b)/2}q^n).
\end{align*}
Then if we choose $a=5$ and $b=1$, our crank is
\begin{align*}
C(z; \tau) &= \prod_{ n \geq 1} \frac{(1-q^n)^2 (1-\zeta^{\pm 2} q^n)(1-\zeta^{\pm 3}q^n)}{(1-q^n)(1-\zeta^{\pm 1}q^n)(1-\zeta^{\pm 2} q^n)(1-\zeta^{\pm 3}q^n)} \\
&= \prod_{n \geq 1} \frac{1-q^n}{(1-\zeta^{1}q^n)(1-\zeta^{-1} q^n)}.
\end{align*}
Note that this choice of crank function is not unique and this discussion only shows it's a crank for $\ell=7$.  However, this is the crank discovered by Andrews and Garvan and it can similarly be shown to simultaneously explain the congruences for $\ell=5$ and $11$.  The known formulas for the Jacobi forms with CM associated to powers of $\eta$ and this technique was used by Rolen, Tripp, and the author in \cite{RTW} to define crank functions for $k$-colored partitions.  The point of this section is to show that cranks can be found for other partition functions using the theory of Jacobi forms with CM.

We will now discuss a doubly infinite family of partition functions with Ramanujan-type congruences and show how to construct a crank function in any specific case.  An {\it overpartition} of $n$ is a partition of $n$ where the first occurrence of any number may be overlined.  As the overlined parts form a partition into distinct parts it's easy that the overpartition generating function is
\begin{equation*}
\sum_{n \geq 0} \overline{p}(n) q^n \coloneqq \prod_{n \geq 1} \frac{1+q^n}{1-q^n}.
\end{equation*} 
We give the following generalization.
\begin{defn}
For $0 < j \leq k$, a {\bf{$(k,j)$-colored overpartition}} of $n$ is a $k$-colored partition of $n$ where the first occurrence of any number of $j$ of the colors may be overlined.  If $\overline{p}_{k,j}(n)$ denotes the number of $(k,j)$-colored overpartitions of $n$, then we have the generating function
\begin{equation*}
\sum_{n \geq 0} \overline{p}_{k,j}(n) q^n := \prod_{n \geq 1} \frac{(1+q^n)^j}{(1-q^n)^k}, \qquad 0 < j \leq k.
\end{equation*}
\end{defn}
We now give the first examples of Ramanujan-type congruences.  This result is analogous to Theorem 2 of \cite{KO} for $k$-colored partitions.
\begin{Thm} \label{Simple}
We have the Ramanujan-type congruence
\begin{equation*}
\overline{p}_{k,j}(\ell n + \delta) \equiv 0 \pmod{\ell}
\end{equation*}
if any of the following hold:
\begin{enumerate}
\item $(k+j, -j) \equiv (-1,0) \pmod{\ell}$ and $\left(\frac{24 \delta +1}{\ell} \right)=-1$ for $\ell \geq 5$ prime.
\item $(k+j, -j) \equiv (0,-1) \pmod{\ell}$ and $\left(\frac{12 \delta +1}{\ell} \right)=-1$ for $\ell \geq 5$ prime.
\item $(k+j, -j) \equiv (-3,0) \pmod{\ell}$ and $\left(\frac{8 \delta +1}{\ell} \right)=-1$ or $8 \delta +1 \equiv 0 \pmod{\ell}$ for $\ell \geq 5$ prime.
\item $(k+j, -j) \equiv (0,-3) \pmod{\ell}$ and $\left(\frac{4 \delta +1}{\ell} \right)=-1$ or $4 \delta +1 \equiv 0 \pmod{\ell}$ for $\ell \geq 5$ prime.
\item $(k+j, -j) \equiv (-2,1) \pmod{\ell}$ and $\left(\frac{\delta}{\ell} \right)=-1$ for $\ell \geq 3$ prime.
\item If $(k+j, -j) \equiv (1,-2) \pmod{\ell}$ and $\left(\frac{8\delta+1}{\ell} \right)=-1$ for $\ell \geq 3$ prime.
\item $(k+j, -j) \equiv (-5,2) \pmod{\ell}$ and $\left(\frac{24\delta +1}{\ell} \right)=-1$ or $24 \delta +1 \equiv 0 \pmod{\ell}$ for $\ell \geq 5$ prime.
\item $(k+j, -j) \equiv (2,-5) \pmod{\ell}$ and $\left(\frac{3\delta +1}{\ell} \right)=-1$ or $3 \delta +1 \equiv 0 \pmod{\ell}$ for $\ell \geq 5$ prime.
\end{enumerate}
\end{Thm}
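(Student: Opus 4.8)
The plan is to reduce all eight cases to the lacunarity of a single explicit CM theta series by an elementary reduction modulo $\ell$ of the eta-quotient attached to $\overline p_{k,j}$. First I would put the generating function in eta-quotient form: using $1+q^n=(1-q^{2n})/(1-q^n)$,
\[ \sum_{n\geq0}\overline p_{k,j}(n)q^n=\prod_{n\geq1}\frac{(1-q^{2n})^j}{(1-q^n)^{k+j}}=q^{\frac{k-j}{24}}\,\frac{\eta(2\tau)^j}{\eta(\tau)^{k+j}}. \]
This already explains why the hypotheses are phrased through $(k+j,-j)\bmod\ell$: the residues $\overline p_{k,j}(n)\bmod\ell$ depend only on $k+j$ and $j$ modulo $\ell$.

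Next I would reduce the exponents with the Frobenius congruence $\eta(m\tau)^\ell\equiv\eta(m\ell\tau)\pmod\ell$. Writing $j=j_0+\ell j_1$ and $k+j=m_0+\ell m_1$ with $j_0,m_0$ the small representatives selected by the hypothesis, this yields
\[ \frac{\eta(2\tau)^j}{\eta(\tau)^{k+j}}\equiv\frac{\eta(2\tau)^{j_0}}{\eta(\tau)^{m_0}}\cdot\eta(2\ell\tau)^{j_1}\,\eta(\ell\tau)^{-m_1}\pmod\ell. \]
Up to a power of $q$ the trailing factor is a power series in $q^{\ell}$, so modulo $\ell$ the whole series is congruent to the core $\eta(2\tau)^{j_0}\eta(\tau)^{-m_0}$ times a series in $q^{\ell}$. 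A short computation shows the total $q$-shift separating the two pieces collapses to $s=\frac{m_0-2j_0}{24}$, depending only on $j_0,m_0$ and hence only on the case (the terms in $k,j$ and in $j_1,m_1$ cancel). Therefore $\overline p_{k,j}(\ell N+\delta)\bmod\ell$ is governed entirely by the coefficients of the core supported on exponents $\equiv\delta-s\pmod\ell$.

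The last step is to identify the core as a classical unary theta series $\sum_nP(n)q^{Q(n)}$ and to read off when those coefficients are forced to vanish mod $\ell$. With the prescribed representatives the cores are $\eta(\tau)$ and $\eta(2\tau)$ in (1)--(2); $\eta(\tau)^3$ and $\eta(2\tau)^3$ in (3)--(4); the weight-$\tfrac12$ quotients $\eta(\tau)^2/\eta(2\tau)$ and $\eta(2\tau)^2/\eta(\tau)$, with expansions $\sum_n(-1)^nq^{n^2}$ and, up to a power of $q$, $\sum_{n\geq0}q^{n(n+1)/2}$, in (5)--(6); and the weight-$\tfrac32$ quotients $\eta(\tau)^5/\eta(2\tau)^2$ and $\eta(2\tau)^5/\eta(\tau)^2$ in (7)--(8). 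In each case the intrinsic order of the core combines with $s$ so that the condition $Q(n)\equiv\delta-s\pmod\ell$ becomes, after clearing the denominator $24$ (a unit for $\ell\geq5$), a congruence of the shape $M(n)^2\equiv D\delta+1\pmod\ell$ for the relevant linear form $M(n)$, with $D\in\{24,12,8,4,1,8,24,3\}$ read off from the order; case (5) has order $0$ and gives the bare $\left(\frac{\delta}{\ell}\right)$. When $D\delta+1$ is a non-residue this congruence is unsolvable, no term of the core lands in the class, and the coefficient vanishes. Finally $\deg P$ equals the core's weight minus $\tfrac12$: it is $0$ (so $P$ is a unit) in the weight-$\tfrac12$ cases (1),(2),(5),(6), and $1$ (so $P(n)$ is proportional to $M(n)$) in the weight-$\tfrac32$ cases (3),(4),(7),(8). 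This is exactly why the latter four carry the extra alternative $D\delta+1\equiv0\pmod\ell$: then $M(n)\equiv0$ forces $P(n)\equiv0\pmod\ell$. The eight stated conditions drop out verbatim, and the lower bound on $\ell$ in each is precisely the condition ensuring the chosen representatives and the denominator clearing remain valid.

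The main obstacle I expect is the bookkeeping underlying $s=\frac{m_0-2j_0}{24}$ and the passage from $Q(n)\equiv\delta-s$ to $M(n)^2\equiv D\delta+1$: one must track the prefactor $q^{(k-j)/24}$, the order of the core, and the orders of the peeled factors $\eta(2\ell\tau)^{j_1}\eta(\ell\tau)^{-m_1}$ together, and confirm the ``$+1$'' and the value of $D$ in every case. The least automatic point is the pair of weight-$\tfrac32$ quotients in (7)--(8), where I must produce the explicit unary theta expansions of $\eta(\tau)^5/\eta(2\tau)^2$ and $\eta(2\tau)^5/\eta(\tau)^2$ and verify that their coefficient polynomials are proportional to the underlying linear form, since that proportionality is exactly what yields the divisibility in the ``$\equiv0$'' alternative.
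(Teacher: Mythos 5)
Your proposal is correct and follows essentially the same route as the paper's (much terser) proof: reduce the generating function modulo $\ell$, via $\eta^{\ell}(m\tau)\equiv\eta(m\ell\tau)\pmod{\ell}$, to one of the eight unary theta quotients $\eta(\tau),\ \eta(2\tau),\ \eta(\tau)^3,\ \eta(2\tau)^3,\ \eta(\tau)^2/\eta(2\tau),\ \eta(2\tau)^2/\eta(\tau),\ \eta(\tau)^5/\eta(2\tau)^2,\ \eta(2\tau)^5/\eta(\tau)^2$ times a series in $q^{\ell}$, and then read off the quadratic-residue conditions (with the extra $\equiv 0$ alternative exactly in the weight-$\tfrac32$ cases, where the coefficient is proportional to $n$) from their classical $q$-expansions. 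Your bookkeeping, including the shift $s=\tfrac{m_0-2j_0}{24}$, checks out against all eight stated conditions, and the explicit expansions you flag as the remaining work for cases (7)--(8) are exactly the ones the paper quotes from the literature.
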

\begin{proof}
These simple congruences for the $(k,j)$-colored overpartitions come from the eta-quotients $\eta(\tau), \eta(2\tau), \eta(\tau)^3, \eta(2 \tau)^3, \frac{\eta(\tau)^2}{\eta(2 \tau)}, \frac{\eta(2 \tau)^2}{\eta(\tau)}, \frac{\eta(\tau)^5}{\eta(2 \tau)^2}$, and $\frac{\eta(2 \tau)^{5}}{\eta(\tau)^2}$ of weights $\frac{1}{2}$ and $\frac{3}{2}$ which are unary theta functions. In particular, the result follows from the cases where $\sum_{n \geq 0} \overline{p}_{k,j}(n) q^n$ is congruent, up to a power of $q$, to one of these forms modulo $\ell$ and from the well-known $q$-expansions \cite{LO}:
\begin{align*}
\eta(\tau) &= \sum_{n \in \Z} \left(\frac{12}{n} \right) q^{\frac{n^2}{24}} \\
\eta(\tau)^3 &= \sum_{n \geq 1} \left(\frac{-4}{n} \right) n q^{\frac{n^2}{8}} \\
\frac{\eta(\tau)^2}{\eta(2 \tau)} &= \sum_{n \in \Z} (-1)^n q^{n^2} \\
\frac{\eta(2 \tau)^2}{\eta(\tau)} &= \sum_{n \geq 1} \left(\frac{n}{4} \right) q^{\frac{n^2}{8}} \\
\frac{\eta(\tau)^5}{\eta(2 \tau)^2} &= \sum_{n \geq 1} \left(\frac{n}{12} \right) n q^{\frac{n^2}{24}} \\
\frac{\eta(2 \tau)^5}{\eta(\tau)^2} &= \sum_{n \geq 1} \left( 2 \left(\frac{n}{12} \right) - \left(\frac{n}{3} \right)\right) n q^{\frac{n^2}{3}}.
\end{align*}
\end{proof}
\begin{Ex}
In the case of $j=k$, $\overline{p}_{k,j}(n) = \overline{p}_{k}(n)$ counts the number of $k$-colored overpartitions.  Let $\ell$ be an odd prime. In this case, when $k= \ell t -1$ we have
\begin{equation*}
\sum_{n \geq 0} \overline{p}_{k}(n) q^n \equiv \prod_{n \geq 1} \frac{(1+q^{\ell t n})}{(1-q^{\ell t n})} \cdot \frac{(1-q^n)^2}{(1-q^{2n})} \pmod{\ell}
\end{equation*}
and therefore
\begin{equation*}
\overline{p}_{\ell t -1}(\ell n + \delta) \equiv 0 \pmod{\ell}, \qquad \left(\frac{\delta}{\ell} \right)=-1.
\end{equation*}
\end{Ex}

Serre's \cite{Serre} classification of even powers of $\eta$ with CM allows one to prove more interesting Ramanujan-type congruences for $k$-colored partitions.  Gordon and Robins \cite{GR} proved the analogous result of Serre for eta-quotients of the form $\eta(\tau)^r \eta(2 \tau)^{s}$ with $r+s$ even.
\newpage
\begin{thm*}[Serre, Gordon-Robins]
Suppose that $r+s$ is even.  Then $\eta(\tau)^r \eta(2 \tau)^s$ has complex multiplication by $\Q(\sqrt{D})$ if and only if $(r,s)$ is one of the pairs:
\begin{center}
\begin{table}[H]
\begin{tabular}{ | m{1cm} | m{3.25cm} | m{3.25cm} | m{3.25cm} | m{3.25cm} |}
\hline
 $(r,s)$ & $\Q(i)$ & $\Q(\sqrt{-2})$ & $\Q(\sqrt{-3})$ & $\Q(\sqrt{-6})$  \\ 
 \hline
 $k=1$ & $(1,1), (3,-1)$, $(-1,3), (4,-2)$, $(-2,4), (2,0)$, $(0,2)$ &  &  & \\
 \hline
 $k=2$ & $(2,2), (6,-2)$, $(-2,6)$ & $(5,-1), (-1,5)$ & $(4,0), (0,4)$ & $(3,1), (1,3)$, $(7,-3), (-3,7)$  \\
 \hline   
 $k=3$ & $(4,2), (2,4)$, $(7,-1), (-1,7)$, $(9,-3), (-3,9)$, $(10,-4), (-4,10)$, $(11,-5), (-5,11)$, $(6,0), (0,6)$ & $(3,3)$ & $(8,-2), (-2,8)$ & $(5,1), (1,5)$  \\
 \hline
   $k=4$ &  &  & $(8,0), (0,8)$ &  \\
 \hline
 $k=5$ & $(5,5), (7,3)$, $(3,7), (14,-4)$, $(-4,14), (16,-6)$, $(-6,16), (17,-7)$, $(-7,17), (18,-8)$, $(-8,18), (15,-5)$, $(-5,15), (19,-9)$, $(-9,19), (10,0)$, $(0,10)$ & $(17,-7), (-7,17)$ & $(16,-6), (-6,16)$, $(18,-8), (-8,18)$ & $(15,-5), (-5,15)$, $(19,-9), (-9,19)$ \\
 \hline
    $k=4$ &  &  & $(14,0), (0,14)$ &  \\
 \hline
  $k=9$ & $(9,9)$ & $(9,9)$ &  &  \\
 \hline
    $k=13$ & $(26,0), (0,26)$ &  & $(26,0), (0,26)$ &  \\
 \hline
\end{tabular}
 \caption{Pairs $(r,s)$ such that $\eta(\tau)^r \eta(2 \tau)^s$ has weight $k=\frac{r+s}{2}$ and CM by $\Q(\sqrt{D})$.}
 \end{table}
 \end{center}
 \end{thm*}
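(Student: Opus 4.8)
The plan is to recognize each candidate eta-quotient as a modular form and then invoke Serre's dichotomy between complex multiplication and lacunarity. First I would establish that for $r+s=2k$ the product $f_{r,s}(\tau) := \eta(\tau)^r \eta(2\tau)^s$ is a (meromorphic, and in the relevant cases holomorphic) modular form of integral weight $k$. Using the transformation law of $\eta$ together with the Ligozat--Newman conditions for eta-quotients, one computes that $f_{r,s}$ lies in a space $M_k(\Gamma_0(N), \chi)$ where the level $N$ is bounded independently of $(r,s)$ and divisible only by the primes $2$ and $3$ (only the arguments $1$ and $2$ occur, so the cusp and multiplier data do not grow with the weight), and where $\chi$ is a quadratic character given by a Kronecker symbol determined by $r$ and $s$ modulo $24$. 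The crucial structural point, which is exactly what makes the classification finite, is this boundedness of the level.

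Next comes the central dichotomy. For $k \geq 2$ I would use Serre's theorem that a cusp form of integral weight at least $2$ is a linear combination of forms with complex multiplication if and only if it is lacunary, i.e.\ the set of $n$ with nonzero $n$-th Fourier coefficient has density zero. Here ``has CM by $\Q(\sqrt{D})$'' means, as elsewhere in the paper, that $f_{r,s}$ equals (up to normalization, and after applying $D_z^{k-1}$ as in Lemma~\ref{S}) a theta series $\theta_{\xi}$ for a Hecke character $\xi$ of $\Q(\sqrt{D})$. The weight $k=1$ cases I would treat separately: a weight-one form has CM precisely when its associated two-dimensional Artin representation is dihedral, i.e.\ induced from a ray-class character of an imaginary quadratic field.

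For the ``if'' direction I would construct, for each listed pair $(r,s)$, an explicit Hecke character $\xi$ of the indicated field with correct infinity type $\xi_{\infty}(\alpha)=(\alpha/|\alpha|)^{k-1}$ and a conductor whose norm, together with $|D|$, matches $N$. By Lemma~\ref{S} and the known $q$-expansions of $\theta_{\xi}$, verifying $f_{r,s}=C\,\theta_{\xi}$ is a finite computation: since $M_k(\Gamma_0(N),\chi)$ is finite-dimensional, matching finitely many Fourier coefficients suffices. The admissible fields are forced to be $\Q(i)$, $\Q(\sqrt{-2})$, $\Q(\sqrt{-3})$, $\Q(\sqrt{-6})$, whose discriminants $-4,-8,-3,-24$ are supported on the primes $2$ and $3$ dividing $N$, because the discriminant of a CM field must divide the level.

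The main obstacle is the ``only if'' direction, namely showing every pair outside the table fails to have CM and that the list is finite. I would argue in two steps. Finiteness in the weight direction comes from combining lacunarity with the bounded level: a CM form in $M_k(\Gamma_0(N),\chi)$ must be a combination of series $\theta_{\xi}$ attached to the four fields above with conductor norm dividing $N$, and for each field there are only finitely many available infinity types and conductors, so having CM forces the coefficients of $f_{r,s}$ to vanish at all primes inert in the CM field. Matching this forced vanishing against the product expansion of $f_{r,s}$ (whose coefficient support is controlled by the pentagonal number theorem and Jacobi triple product) pins $(r,s)$ down to the finite list and bounds the weight. The genuinely hard input, which rules out the remaining candidate pairs, is Serre's non-lacunarity theorem for forms without a CM component: such a cusp form of weight at least $2$ has $a(p)\neq 0$ for a positive density of primes $p$, hence nonzero coefficients on a positive-density set of $n$, contradicting the density-zero support that any genuine CM (hence lacunary) form must have. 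This step rests on the modularity of the associated Galois representations and the largeness of their image in the non-CM case, and it is the deepest ingredient in the whole argument.
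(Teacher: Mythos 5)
The paper does not actually prove this theorem: it is quoted, with attribution, from Serre \cite{Serre} (the case $s=0$) and Gordon--Robins \cite{GR} (the general case with $r+s$ even), so there is no in-paper argument to compare against. Judged as a reconstruction of those cited proofs, your outline has the correct architecture: the level of $\eta(\tau)^r\eta(2\tau)^s$ is supported at the primes $2$ and $3$ independently of $(r,s)$, which forces any CM field to be one of $\Q(i)$, $\Q(\sqrt{-2})$, $\Q(\sqrt{-3})$, $\Q(\sqrt{-6})$, and the ``if'' direction reduces to matching finitely many Fourier coefficients against explicitly constructed theta series $\theta_{\xi}$ in a finite-dimensional space.

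There is, however, a genuine gap at the heart of the ``only if'' direction: you never explain why the classification is finite in the weight, i.e., why no pairs occur beyond $k=13$. Saying that the forced vanishing ``pins $(r,s)$ down to the finite list and bounds the weight'' is the conclusion, not an argument. The mechanism in Serre and Gordon--Robins is concrete: for fixed $m$, the coefficient of $q^m$ in $\prod_{n\geq 1}(1-q^n)^r(1-q^{2n})^s$ is a polynomial in $r$ and $s$ of total degree at most $m$; a linear combination of theta series attached to the four candidate fields (with conductor norm dividing the fixed level) is supported on integers represented by finitely many explicit binary quadratic forms; so choosing a few exponents $m$ for which $24m+r+2s$ escapes all of these supports yields polynomial equations in $(r,s)$ with only finitely many solutions, which are then inspected individually. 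Without this (or an equivalent) finiteness device the ``only if'' direction does not close. A secondary point: the lacunarity dichotomy and the Chebotarev/Galois-image input that you single out as the deepest ingredient are what one needs to rephrase the theorem in terms of lacunary eta-quotients, but for the statement as written --- a classification of which $\eta(\tau)^r\eta(2\tau)^s$ \emph{are} linear combinations of CM forms --- the ``only if'' step requires only the exhibition of a single nonvanishing coefficient at an index not represented by the relevant quadratic forms, which is elementary once the candidate fields, conductors, and infinity types have been enumerated; invoking the non-lacunarity of non-CM forms is an unnecessary (and strictly stronger) detour here.
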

 \begin{rmk}
 If the same pair $(r,s)$ is in two different columns in Table 1, then it means the form $\eta(\tau)^r \eta(2 \tau)^s$ is a linear combination of CM forms with Hecke characters coming from the associated fields.
 \end{rmk}
 Using this result we are able to prove further Ramanujan-type congruences for $(k,j)$-colored overpartitions.
 \begin{Thm} \label{Cong}
 Let $\ell \geq 5$ be prime and $\delta_{k,j,\ell}$ be such that $24 \delta_{k,j,\ell} \equiv k-j \pmod{\ell}$.  Then we have the Ramanujan-type congruence
 \begin{equation*}
 \overline{p}_{k,j}(\ell n + \delta_{k,j,l}) \equiv 0 \pmod{\ell}
 \end{equation*}
 if $(k+j, -j) \equiv -(r,s) \pmod{\ell}$ for any of the pairs in Table 1 with $\frac{r+s}{2} \neq 1$ and $\ell$ is inert in the associated $\Q(\sqrt{D})$.
 \end{Thm}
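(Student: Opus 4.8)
The plan is to follow the template of Theorem~\ref{Simple} and the guiding discussion at the start of this subsection, but with the unary theta functions of weight $\tfrac12$ and $\tfrac32$ replaced by the higher weight CM forms $\eta(\tau)^r\eta(2\tau)^s$ of weight $\frac{r+s}{2}\geq 2$. First I would rewrite the generating function as an eta-quotient: using $(1+q^n)^j=(1-q^{2n})^j(1-q^n)^{-j}$ gives
\[
\sum_{n\geq 0}\overline{p}_{k,j}(n)q^n=\prod_{n\geq 1}(1-q^n)^{-(k+j)}(1-q^{2n})^j.
\]
Working modulo $\ell$, the congruences $(1-q^n)^\ell\equiv 1-q^{\ell n}$ and $(1-q^{2n})^\ell\equiv 1-q^{2\ell n}\pmod\ell$ let me shift the exponents $-(k+j)$ and $j$ by multiples of $\ell$ at the cost of a factor that is a power series in $q^\ell$. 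The hypothesis $(k+j,-j)\equiv-(r,s)\pmod\ell$ is exactly $-(k+j)\equiv r$ and $j\equiv s\pmod\ell$, so it yields
\[
\sum_{n\geq 0}\overline{p}_{k,j}(n)q^n\equiv g(q^\ell)\prod_{n\geq 1}(1-q^n)^r(1-q^{2n})^s=g(q^\ell)\,q^{-(r+2s)/24}\eta(\tau)^r\eta(2\tau)^s\pmod\ell
\]
for some power series $g$ supported on powers of $q^\ell$.

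Next I would isolate the arithmetic progression. Since $g(q^\ell)$ only contributes in degrees divisible by $\ell$, the coefficient $\overline{p}_{k,j}(\ell n+\delta)$ is congruent modulo $\ell$ to a $\Z$-linear combination of the coefficients of $q^{-(r+2s)/24}\eta(\tau)^r\eta(2\tau)^s$ in degrees $\equiv\delta\pmod\ell$. Writing such a degree as $M/24$, the defining relation $24\delta\equiv k-j$ together with $r+2s\equiv j-k\pmod\ell$ (which follows from $r\equiv-(k+j)$, $s\equiv j$) forces $M\equiv 24\delta+(r+2s)\equiv 0\pmod\ell$. Thus it suffices to prove that the coefficient of $q^{M/24}$ in $\eta(\tau)^r\eta(2\tau)^s$ vanishes modulo $\ell$ whenever $\ell\mid M$.

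For this I would invoke the CM structure. By the theorem of Serre and Gordon--Robins, and because $\frac{r+s}{2}\neq 1$ with $r+s$ even, $\eta(\tau)^r\eta(2\tau)^s$ is a weight $k=\frac{r+s}{2}\geq 2$ cusp form with complex multiplication by $\Q(\sqrt D)$; by the Definition it is a $\Q$-linear combination of theta series $\theta_\xi$, whose $q$-exponents are norms $N(\mathfrak{a})=\alpha\overline{\alpha}$ up to a fixed rational scalar coprime to $\ell$ (here $\ell\geq 5$ is coprime to $24$, to $N(\m)$, and to the level, so inertness makes $(\ell)$ a prime of $\mathcal{O}_K$). The coefficient attached to $\mathfrak{a}$ is, in Serre's normalization, $\xi_{\m}(\alpha)\alpha^{k-1}$. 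If $\ell\mid M$ then $\ell\mid N(\mathfrak{a})=\alpha\overline{\alpha}$; since $(\ell)$ is prime and divides $(\alpha)(\overline{\alpha})$ it divides $(\alpha)$, so $\ell\mid\alpha$ in $\mathcal{O}_K$ and hence $\ell\mid\alpha^{k-1}$ because $k-1\geq 1$. Summing over the finitely many $\mathfrak{a}$ of the given norm shows the coefficient is $\equiv 0\pmod\ell$, completing the argument.

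The main obstacle is precisely this last step: making the coefficient divisibility rigorous while tracking the normalization of the exponent (the factor $1/24$ and any internal scaling $\tau\mapsto t\tau$ with $\ell\nmid t$) and handling the cases where $\eta(\tau)^r\eta(2\tau)^s$ is a genuine linear combination of theta series. In particular the entries $(9,9)$ and $(26,0),(0,26)$ sit in two columns of Table~1, so there the form decomposes over two distinct imaginary quadratic fields and one needs $\ell$ inert in each contributing field for the whole coefficient to vanish. It is also worth emphasizing the contrast with the weight one cases excluded here and treated in Theorem~\ref{Simple}: when $k=1$ the factor $\alpha^{k-1}$ is a unit, so divisibility fails and the vanishing must instead come from a non-representability (quadratic residue) condition, which is exactly why the hypothesis $\frac{r+s}{2}\neq 1$ appears.
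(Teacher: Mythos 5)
Your proposal is correct and follows essentially the same route as the paper's (much terser) proof: reduce modulo $\ell$ to the CM eta-quotient $\eta(\tau)^r\eta(2\tau)^s$ times a series in $q^\ell$, check that the progression $\ell n+\delta_{k,j,\ell}$ lands on exponents divisible by $\ell$, and conclude from inertness of $\ell$ in the CM field(s). Your expansion of the final step --- that an inert $\ell$ dividing $N(\alpha)=\alpha\overline{\alpha}$ forces $\ell\mid\alpha$ and hence $\ell\mid\alpha^{k-1}$ for $k\geq 2$, with the caveat about forms decomposing over two fields --- is exactly the content the paper compresses into its last sentence and its accompanying remark.
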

 \begin{rmk}
 If a pair $(r,s)$ shows up for two different fields in Table 1, then $\ell$ must be inert in both of those fields.
 \end{rmk}
\begin{proof}
If $(k+j, -j) \equiv -(r,s) \pmod{\ell}$ then $\sum_{n \geq 0} \overline{p}_{k,j}(n) q^n$ is, up to powers of $q$, congruent modulo $\ell$ to $\eta(\tau)^r \eta(2 \tau)^{s}$ times a $q$-series only supported on exponents which are multiples of $\ell$.  The congruence is then equivalent to the same congruence for the eta-quotient with CM, where the missing $q$-power is accounted for by the shift $\delta_{k,j,\ell}$.  The congruence for the CM form then follows as $\ell$ is inert in the associated field or fields.
\end{proof}
\begin{Ex}
Consider $3$-colored partitions where the first occurrence of any number in $2$ of the colors may be overlined.  These $(3,2)$-colored overpartitions have generating function given by
\begin{equation*}
\sum_{n \geq 0} \overline{p}_{3,2}(n) q^n = \prod_{n \geq 1} \frac{(1+q^n)^{2}}{(1-q^n)^3}.
\end{equation*}
We have $(k+j,-j) = (5,-2) \equiv -(2,2) \pmod{7}$, and $(5,-2) \equiv -(-5,15) \pmod{13}$ and $7$ and $13$ are inert in $\Q(i)$ and $\Q(\sqrt{-6})$ respectively.  However, $13$ is not inert in $\Q(i)$ so we only obtain a congruence modulo $7$.  We find
\begin{equation*}
\overline{p}_{3,2}(7n+5) \equiv 0 \pmod{7}.
\end{equation*}
\end{Ex}
If one now wants to construct crank functions to explain these congruences they just need to look at the Jacobi form with CM that specializes to the modular form with CM that gave the congruence.  We begin by looking at $2$-colored overpartitions.  These were studied by Bringmann and Lovejoy in \cite{BL} where they were called overpartition pairs.  It was proved in \cite{BL} that
\begin{equation*}
\overline{p}_{2}(3n+2) \equiv 0 \pmod{3}.
\end{equation*}
One can also see this from Theorem \ref{Simple} as $(k+j, -j) = (4,-2) \equiv (1,-2) \pmod{3}$.  The relevant CM form here is 
\begin{equation*}
\frac{\eta(2 \tau)^2}{\eta(\tau)} = \sum_{n \geq 1} \left( \frac{n}{4} \right) q^{\frac{n^2}{8}}.
\end{equation*}
The relevant Jacobi form living above this modular form is
\begin{align*}
\theta_{2}(z; \tau) &= \sum_{n \in \Z} \left(\frac{4}{n} \right) \zeta^{\frac{n}{2}} q^{\frac{n^2}{8}} \\
&=\frac{\eta(\tau)^2 \theta(2z; 2\tau)}{\eta(2 \tau) \theta(z; \tau)} = q^{\frac{1}{8}} \left( \zeta^{\frac{1}{2}} + \zeta^{-\frac{1}{2}} \right) \prod_{n \geq 1} \frac{(1-q^n)(1-\zeta^2 q^{2n})(1-\zeta^{-2}q^{2n})}{(1-\zeta q^n)(1-\zeta^{-1}q^n)} \\
&= q^{\frac{1}{8}} \left( \zeta^{\frac{1}{2}} + \zeta^{-\frac{1}{2}} \right) \prod_{n \geq 1} (1-q^n)(1+\zeta q^n)(1+\zeta^{-1}q^n).
\end{align*}
We then claim a crank function for the $2$-colored overpartitions is
\begin{equation*}
\overline{C}_{2}(z; \tau) := \prod_{n \geq 1} \frac{(1+\zeta q^n)(1+\zeta^{-1}q^n)}{(1-\zeta q^n)(1-\zeta^{-1}q^n)}.
\end{equation*}
To see this note that 
\begin{align*}
\overline{C}_{2}(z; \tau) &= \tilde{\theta}_{2}(z;\tau) \prod_{n \geq 1}\frac{1}{\left[ 1-q^{3n} + \Phi_{3}(\zeta)g_{n}(z;\tau) \right]}
\end{align*}
and so $\Phi_{3}(\zeta) \big| \left[q^{3n + 2} \right] \overline{C}_{2}(z; \tau)$.
\begin{rmk}
The crank function for the $2$-colored overpartitions studied in \cite{BL} is slightly different than the one presented here.
\end{rmk}
We now show how to construct a crank function for the more interesting congruence $\overline{p}_{3,2}(7n+5) \equiv 0 \pmod{7}$.  Here the relevant CM form is
\begin{equation*}
\eta(\tau)^2 \eta(2 \tau)^2 = -\frac{1}{2} \sum_{n_1, n_2 \in \Z} \left(\frac{-4}{n_1 (n_2 -1)} \right)(n_1 +n_2) q^{\frac{n_{1}^2 + n_{2}^2}{4}}.
\end{equation*}
This form has complex multiplication with respect to $\Q(i)$ with a character of conductor $\m =(1+i)^3$.  Therefore $\alpha = n_1 + i n_2$ is coprime to $\m$ if $n_1 \not\equiv n_2 \pmod{2}$ and $\alpha \equiv^{*} 1 \pmod{\m}$ if $n_1 \equiv 1 \pmod{4}$ and $n_2 \equiv 0 \pmod{4}$.  We consider the character 
\begin{equation*}
\xi_{(1+i)^3}(\alpha) =\begin{cases} 1 & \text{if} \ n_1 + n_2 \equiv 1 \pmod{4} \\ -1 & \text{if} \ n_1 + n_2 \equiv -1 \pmod{4}. \end{cases}
\end{equation*}
Explicitly we have 
\begin{equation*}
\xi_{(1+i)^3}(\alpha) =\begin{cases} -\left( \frac{-4}{n_1 (n_2 -1)} \right) & \text{if} \ n_1  \equiv 1 \pmod{2} \\ -\left(\frac{-4}{(n_1 -1)n_2} \right) & \text{if} \ n_1  \equiv 0 \pmod{2}. \end{cases}
\end{equation*}
We therefore have the following.
\begin{Prop}
Assume the notation above.  Then
\begin{align*}
\phi_{\xi}^{*}(z; \tau) &= -\sum_{n_1, n_2 \in \Z} \left[ \left(\frac{-4}{n_1 (n_2 -1)} \right) + \left(\frac{-4}{(n_1 -1)n_2} \right) \right] \zeta_{1}^{\frac{n_1}{2}} \zeta_{2}^{\frac{n_2}{2}} q^{\frac{n_{1}^2 + n_{2}^2}{4}} \\
&= \theta(z_1 ; 2 \tau) \theta_{4}(z_2; 2\tau) + \theta(z_2; 2\tau) \theta_{4}(z_1; 2 \tau).
\end{align*}
\end{Prop}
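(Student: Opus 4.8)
The plan is to establish the two equalities in turn: the first is an unpacking of the definition of $\phi_{\xi}^{*}$ together with a short congruence check on the character, while the second is a factorization of each double sum into a product of univariate theta series.

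For the first equality I would specialize the definition of $\phi_{\xi,C}^{*}$ from Theorem \ref{P} to the class-number-one field $K=\Q(i)$, so that $\a_0 = \mathcal{O}_{K} = \Z[i]$ and, writing $\alpha = n_1 + in_2$, one has $Q^{*}(\alpha) = \frac{n_1^2+n_2^2}{4}$ and $B^{*}(\alpha,z) = \frac{n_1 z_1 + n_2 z_2}{2}$ (exactly as in the $r=6$ case, which shares the field and the level-$4$ form), giving the monomial $\zeta_1^{n_1/2}\zeta_2^{n_2/2}q^{(n_1^2+n_2^2)/4}$. It then remains to check that $-\left[\left(\frac{-4}{n_1(n_2-1)}\right)+\left(\frac{-4}{(n_1-1)n_2}\right)\right]$ agrees with $\xi_{(1+i)^3}(\alpha)$ for every $(n_1,n_2)$. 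The key observation is that the two Kronecker symbols have disjoint support: since $\left(\frac{-4}{\cdot}\right)$ vanishes on even arguments, the first symbol is nonzero exactly when $n_1$ is odd and $n_2$ is even, and the second exactly when $n_1$ is even and $n_2$ is odd. These are precisely the two cases in the explicit definition of $\xi_{(1+i)^3}$, and on each support the relevant symbol equals $-\xi_{\m}(\alpha)$, which I would verify by a direct computation modulo $4$. Hence at most one of the two symbols contributes for any given $\alpha$, and the bracketed expression reproduces the character.

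For the second equality I would use complete multiplicativity of $\left(\frac{-4}{\cdot}\right)$ to split each double sum into a product of univariate sums. The term built from $\left(\frac{-4}{n_1(n_2-1)}\right)$ factors as $\left(\sum_{n_1}\left(\frac{-4}{n_1}\right)\zeta_1^{n_1/2}q^{n_1^2/4}\right)\left(\sum_{n_2}\left(\frac{-4}{n_2-1}\right)\zeta_2^{n_2/2}q^{n_2^2/4}\right)$, and the term from $\left(\frac{-4}{(n_1-1)n_2}\right)$ factors symmetrically with the roles of $n_1$ and $n_2$ interchanged. I would then identify each factor with a theta function from Proposition \ref{BB} after the substitution $\tau\mapsto 2\tau$ in the series there: the unshifted factor is $\sum_n\left(\frac{-4}{n}\right)\zeta^{n/2}q^{n^2/4}=\theta(z;2\tau)$, while for the shifted factor I would reindex $n_2\mapsto n_2-1$ to obtain $\sum_m\left(\frac{-4}{m}\right)\zeta_2^{(m+1)/2}q^{(m+1)^2/4}=-\theta_4(z_2;2\tau)$ from the series for $\theta_4$. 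Combining the overall minus sign in front of the double sum with the minus coming from $\theta_4$ then yields $\theta(z_1;2\tau)\theta_4(z_2;2\tau)$ from the first term and $\theta(z_2;2\tau)\theta_4(z_1;2\tau)$ from the second, giving the asserted theta-block expression.

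The computation is routine once this structure is fixed; the one point requiring care is the sign bookkeeping, since $\theta_4$ is itself defined with a leading minus sign and an argument shift by $\tau/2$, and it is exactly that shift which absorbs the $(m+1)^2$ exponent produced by the reindexing. Ensuring that these minus signs cancel against the $-$ standing in front of the double sums is the only place where an error could plausibly enter; everything else is a formal manipulation of the series definitions together with the disjoint-support remark from the first step.
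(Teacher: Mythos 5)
Your proposal is correct and follows exactly the route the paper intends: the paper states this Proposition immediately after computing $\xi_{(1+i)^3}$ explicitly in the two parity cases, and the proof it has in mind is precisely your unpacking of $\phi_{\xi}^{*}$ over $\Z[i]$ (using the disjoint supports of the two Kronecker symbols to recover the character) followed by the multiplicative factorization into $\theta(\,\cdot\,;2\tau)$ and $\theta_{4}(\,\cdot\,;2\tau)$ via the series in Proposition \ref{BB}. Your sign bookkeeping, in which the leading minus cancels against the minus in the series for $\theta_{4}$ after the reindexing $n\mapsto n-1$, checks out.
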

For the purposes of constructing a crank function we can of course instead take
\begin{align*}
\psi_{\xi}(z; \tau) &= -\sum_{n_1, n_2 \in \Z} \left(\frac{-4}{n_1 (n_2 -1)} \right) \zeta_{1}^{\frac{n_1}{2}} \zeta_{2}^{\frac{n_2}{2}} q^{\frac{n_{1}^2 + n_{2}^2}{4}} \\
&= \theta(z_1 ; 2 \tau) \theta_{4}(z_2; 2\tau) \\
&= q^{\frac{1}{4}}(\zeta_{1}^{\frac{1}{2}} - \zeta_{1}^{-\frac{1}{2}}) \prod_{n \geq 1} \frac{(1-q^{2n})^2 (1-\zeta_{1}^{\pm 1} q^{2n})(1-\zeta_{2}^{\pm 1} q^n)}{(1- \zeta_{2}^{\pm 1}q^{2n})}.
\end{align*}
In fact, we will only need 
\begin{align*}
\psi_{\xi}(3z; \tau) &=\theta(3z;2 \tau) \theta_{4}(3z; 2 \tau) = -\sum_{n_1, n_2 \in \Z} \left(\frac{-4}{n_1 (n_2 -1)} \right) \zeta^{\frac{3}{2}(n_1 +n_2)} q^{\frac{n_{1}^2 + n_{2}^2}{4}} \\
&=q^{\frac{1}{4}}(\zeta^{\frac{3}{2}} - \zeta^{-\frac{3}{2}}) \prod_{n \geq 1} (1-q^{2n})^2 (1-\zeta^{\pm 3} q^n) \\
&= \frac{\eta(2 \tau)^2}{\eta(\tau)} \theta(3z; \tau).
\end{align*}
for the following.
\begin{Thm}
Let 
\begin{equation*}
\overline{C}_{3,2}(z; \tau) := \prod_{n \geq 1} \frac{(1-q^n)(1+q^n)^2}{(1-\zeta^{\pm1} q^n)(1-\zeta^{\pm2}q^n)}.
\end{equation*}
Then $\overline{C}_{3,2}(z; \tau)$ defines a crank function for the $(3,2)$-colored overpartitions.  In particular,
\begin{equation*}
\Phi_{7}(\zeta) \big| \left[q^{7n+5}\right] \overline{C}_{3,2}(z; \tau).
\end{equation*}
\end{Thm}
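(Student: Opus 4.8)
The plan is to check the two conditions in the crank criterion recalled above: that $\overline{C}_{3,2}(0;\tau)=\sum_{n\ge 0}\overline{p}_{3,2}(n)q^n$, and that $\Phi_{7}(\zeta)\mid[q^{7n+5}]\overline{C}_{3,2}(z;\tau)$. The first is immediate, since setting $\zeta=1$ collapses $(1-\zeta^{\pm1}q^n)(1-\zeta^{\pm2}q^n)$ to $(1-q^n)^4$ and leaves $\prod_{n\ge1}(1-q^n)(1+q^n)^2/(1-q^n)^4=\prod_{n\ge1}(1+q^n)^2/(1-q^n)^3$, the generating function of $\overline{p}_{3,2}$. All the work is in the divisibility, which I would reduce to a statement about the CM form $\psi_{\xi}$.

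First I would express $\overline{C}_{3,2}$ through $\psi_{\xi}$. Multiplying numerator and denominator by $\prod_{n\ge1}(1-\zeta^{\pm3}q^n)$ and using $(1-q^n)(1+q^n)=1-q^{2n}$ turns the numerator into $\prod_{n\ge1}(1-q^{2n})^2(1-\zeta^{\pm3}q^n)=\widetilde{\psi}_{\xi}(3z;\tau)$, the CM form of the preceding discussion with its prefactor removed, and the denominator into $G(z;\tau):=\prod_{n\ge1}\prod_{a=-3}^{3}(1-\zeta^{a}q^n)$, so that $\overline{C}_{3,2}(z;\tau)=\widetilde{\psi}_{\xi}(3z;\tau)/G(z;\tau)$. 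Since $\{0,\pm1,\pm2,\pm3\}$ is a complete residue system modulo $7$, evaluation at a primitive seventh root of unity gives $\prod_{a=-3}^{3}(1-\zeta^{a}q^n)\equiv 1-q^{7n}\pmod{\Phi_{7}(\zeta)}$; hence $G^{-1}\equiv\prod_{n\ge1}(1-q^{7n})^{-1}\pmod{\Phi_{7}(\zeta)}$ is supported only on exponents in $7\Z$. Therefore $[q^{7n+5}]\overline{C}_{3,2}$ is, modulo $\Phi_{7}(\zeta)$, a $\Z$-combination of the coefficients $[q^{7m+5}]\widetilde{\psi}_{\xi}(3z;\tau)$, and the whole statement reduces to proving $\Phi_{7}(\zeta)\mid[q^{7m+5}]\widetilde{\psi}_{\xi}(3z;\tau)$ for all $m$. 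This mirrors the $\ell=7$ computation already carried out for $\widetilde{\phi}_{6}$.

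The crux is this reduced divisibility, which I would obtain from a triangular-number structure. Write $T_k:=k(k+1)/2$. By Euler's identity $E(\tau):=\prod_{n\ge1}(1-q^{2n})^2/(1-q^n)=\prod_{n\ge1}(1-q^{2n})(1+q^n)=\sum_{j\ge0}q^{T_j}$, while the Jacobi triple product applied to $\Theta_{3}(z;\tau):=\prod_{n\ge1}(1-q^n)(1-\zeta^{\pm3}q^n)$ gives $\Theta_{3}(z;\tau)=\sum_{k\ge0}(-1)^{k}S_{k}(\zeta)\,q^{T_k}$ with $S_{k}(\zeta):=\sum_{i=-k}^{k}\zeta^{3i}$ (specializing $\zeta=1$ returns $\prod_{n\ge1}(1-q^n)^3=\sum_{k}(-1)^k(2k+1)q^{T_k}$, a useful check). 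As $\widetilde{\psi}_{\xi}(3z;\tau)=E(\tau)\Theta_{3}(z;\tau)$, both factors are supported on triangular exponents, so $[q^{M}]\widetilde{\psi}_{\xi}(3z;\tau)=\sum_{T_j+T_k=M}(-1)^{k}S_{k}(\zeta)$.

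Finally I would activate the inertness of $7$. For $M=7m+5$ and $T_j+T_k=M$ one has $(2j+1)^2+(2k+1)^2=8M+2\equiv 0\pmod 7$; because $7$ is inert in $\Q(i)$, the CM field of $\psi_{\xi}$, a vanishing sum of two squares modulo $7$ forces each term to vanish, so $7\mid(2k+1)$, i.e. $k\equiv3\pmod7$. For such $k$ the $2k+1$ consecutive exponents $\{3i:-k\le i\le k\}$ cover every residue modulo $7$ equally often, so evaluating $S_{k}(\zeta)$ at a primitive seventh root of unity yields $\tfrac{2k+1}{7}(1+\zeta+\cdots+\zeta^{6})=0$; thus $\Phi_{7}(\zeta)\mid S_{k}(\zeta)$, and every term of $[q^{7m+5}]\widetilde{\psi}_{\xi}(3z;\tau)$ is divisible by $\Phi_{7}(\zeta)$. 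This closes the reduction and the proof. The main obstacle is precisely this last step: spotting the decomposition $\widetilde{\psi}_{\xi}=E\,\Theta_{3}$ onto triangular supports and converting the inertness of $7$ into the arithmetic statement $7\mid(2k+1)$ that makes each $S_{k}(\zeta)$ a complete cyclotomic sum; the remaining steps are routine manipulations of infinite products.
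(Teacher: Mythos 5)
Your proof is correct and follows essentially the same route as the paper: the same insertion of $(1-\zeta^{\pm 3}q^n)$ factors to rewrite $\overline{C}_{3,2}$ as $\widetilde{\psi}_{\xi}(3z;\tau)$ divided by a product that is supported on multiples of $7$ modulo $\Phi_{7}(\zeta)$, followed by the inertness of $7$ in $\Q(i)$. Your triangular-number decomposition $\widetilde{\psi}_{\xi}(3z;\tau)=E(\tau)\Theta_{3}(z;\tau)$ is exactly the explicit form of the paper's identity $\psi_{\xi}(3z;\tau)=\tfrac{\eta(2\tau)^2}{\eta(\tau)}\theta(3z;\tau)$, and it supplies the details behind the paper's terse closing sentence.
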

\begin{proof}
We multiply $\overline{C}_{3,2}(z; \tau)$ by $1=\prod_{n \geq 1} \frac{(1-q^n)(1- \zeta^{\pm3} q^n)}{(1-q^n)(1- \zeta^{\pm3} q^n)}$ to obtain
\begin{align*}
\overline{C}_{3,2}(z; \tau) &= \prod_{n \geq 1} \frac{(1-q^{2n})^2 (1-\zeta^{\pm 3}q^n)}{(1-q^n)(1-\zeta^{\pm1} q^n)(1-\zeta^{\pm2} q^n)(1-\zeta^{\pm3} q^n)} \\
&= \tilde{\psi}_{\xi}(3z; \tau) \cdot \prod_{n \geq 1} \frac{1}{1-q^{7n} + \Phi_{7}(\zeta)g_{n}(z; \tau)}.
\end{align*}
The theorem then follows from the expression given for $\widetilde{\psi}_{\xi}(3z; \tau)$.
\end{proof}


\begin{thebibliography}{9}


\bibitem{AndrewsGarvan}
G. E. Andrews and  F. G. Garvan, {\it Dyson's crank of a partition}, Bulletin (New Series) of the American Math. Soc. {\bf 18} (1988) (2).

\bibitem{AtkinSD}
A. O. L. Atkin and  H. P. F. Swinnerton-Dyer, {\it Some properties of partitions}, Proceedings of the London Math. Soc. {\bf 66} (1954) no. 4, 84--106. 

\bibitem{Bor}
J.~Borwein, M.~Hirschhorn, and F.~Garvan, {\it Cubic analogues of the Jacobian theta function}, Canad. J. Math., {\bf 45} (1993) no. 4, 673--694.

\bibitem{Boylan} Matthew Boylan, {\it Exceptional congruences for powers of the partition function}, Acta Arithmetica {\bf 111} (2004), 187-203.

\bibitem{BFOR}
K.~Bringmann, A.~Folsom, K.~Ono, and L.~Rolen, {\it Harmonic Maass forms and mock modular forms: theory and applications}. Amer. Math. Soc., Providence, RI, 2017.

\bibitem{BL}
K.~Bringmann and J.~Lovejoy, {\it Rank and congruence for overpartition pairs}, International J. of Number Theory, {\bf 4} (2008), no. 2, 303--322.

\bibitem{BGRT}
K.~Bringmann, K.~Gomez, L.~Rolen, and Z.~Tripp, {\it Infinite families of crank functions, Stanton-type conjectures, and unimodality}, 	arXiv:2108.12979.

\bibitem{Choie}
Y.~Choie and S.~Lim, {\it Construction of Jacobi forms associated to indefinite quadratic forms}, Ramanujan J., {\bf 21} (2010), 27--39.

\bibitem{CS}
H.~Cohen and F.~Str\"{o}mberg, {\it Modular forms: a classical approach}, Vol. 179 of {\it Graduate Studies in Mathematics}, Amer. Math. Soc., Providence, RI, 2017.

\bibitem{DW}
M.L. Dawsey and I. Wagner, {\it Congruences for powers of the partition function}, Annals of Combinatorics  {\bf 21} (2017) no. 1, 83--93.

\bibitem{Dy}
F.J.~Dyson, {\it Missed Opportunities}, Bull. Amer. Math. Soc., {\bf 78} (1972), 635--652.

\bibitem{EZ}
M. Eichler and D.B. Zagier, {\it The Theory of Jacobi Forms}, Progress in Mathematics, no. 55, Birkha\"user, 1985.

\bibitem{Garvan1}
F. G. Garvan, {\it Generalizations of Dyson's rank}, Ph.D. thesis, Pennsylvania State Univ., 1986. 

\bibitem{Garvan2}
F. G. Garvan, {\it New combinatorial interpretations of Ramanujan's partition congruences mod $5$, $7$, and $11$}, Trans. Amer. Math. Soc. {\bf 305} (1988), pp. 47--77.

\bibitem{GMY}
A.~Gomez, D.~McCarthy, and D.~Young, {\it Ap\'{e}ry-like numbers and families of newforms with complex multiplication}, Res. Number Theory, {\bf 5}, (2019).

\bibitem{GR}
B.~Gordon and S.~Robins, {\it Lacunarity of Dedekind $\eta$-products}, Glasgow Math. Journal, {\bf 37} (1995), no. 1, 1--14.

\bibitem{GSZ}
V. Gritsenko, N. Skoruppa, and D. Zagier, {\it Theta blocks}, preprint, 2019. 

\bibitem{KO}
I.~Kiming and B.~Olsson, {\it Congruences like Ramanujan's for powers of the partition function}, Arch. Math. {\bf 59} (1992), 348--360.

\bibitem{LO}
R.J.~Lemke Oliver, {\it Eta-quotients and theta functions}, Advances in Math., {\bf 241} (2013), 1--17.

\bibitem{M}
I. G. Macdonald {\it Affine root systems and Dedekind's $\eta$-function}, Invent. Math. {\bf 15} (1972), 91--143,

\bibitem{MO}
Y.~Martin and K.~Ono, {\it Eta-quotients and elliptic curves}, Proc. Amer. Math. Soc. {\bf 125} (1997), no. 11, 3169--3176.

\bibitem{OR}
K.~Ono and S.~Robins, {\it Superlacunary cusp forms}, Proc. Amer. Math. Soc., {\bf 123}, (1995) no. 3, 1021--1029.

\bibitem{Ramanujan}
S. Ramanujan, {\it Congruence properties of partitions}, Mathematische Zeitschrift. {\bf 9} (1921)  no. 1--2, 147--153.

\bibitem{RV}
F.~Rodriguez Villegas, {\it On the Taylor coefficients of theta functions of CM elliptic curves}, Contemp. Math. {\bf 174} (1994).

\bibitem{RTW}
L.~Rolen, Z.~Tripp, and I.~Wagner, {\it Cranks for Ramanujan-type congruences of $k$-colored partitions}, arXiv:2006.16195.

\bibitem{Serre}
J.P.~Serre, {\it Sur la lacunarit\'{e} des puissances de $\eta$},  Glasgow Math. Journal, {\bf 27} (1985)  203--221.

\bibitem{S2}
N.-P~Skoruppa, {\it Jacobi forms of critical weight and Weil representations}, Modular forms on Schiermonnikoog, 239--266. Cambridge Univ. Press, Cambridge, 2008.

\bibitem{Stopple}
J.~Stopple, {\it Theta series for indefinite quadratic forms over real number fields}, Acta Arithmetica, {\bf 72} (1995), 299--309.

\bibitem{TW}
R.~Taylor and A.~Wiles, {\it Ring-theoretic properties of certain Hecke algebras}, Ann. of Math. {\bf 141} (1995), no.3, 553--572.

\bibitem{V}
M.~Vign\'{e}ras, {\it S\'{e}ries theta des formes quadratiques ind\'{e}finies}, In: Modular Functions of One Variable, VI., Proc. Second Internat. Conf., Univ. Bonn, Bonn, (1976).

\bibitem{W}
A.~Wiles, {\it Modular elliptic curves and Fermat's last theorem}, Ann. of Math., {\bf 141} (1995), no. 3, 443--551.

\bibitem{Z2}
D.~Zagier, {\it Introduction to modular forms}, (1992), In: Waldschmidt, M., Moussa, P., Luck, JM., Itzykson, C. (eds) From Number Theory to Physics. Springer, Berlin, Heidelberg.

\end{thebibliography}
\end{document}